\def\mylabelonoff{off}
\def\allowdisbrkyesno{yes}
\def\numberingtheoremsectionyesno{yes}
\def\numberingequationsectionyesno{yes}
\def\pagesizeextendednormal{extended}
\def\reportudemathyesno{no}
\def\reportudemathnumber{SM-UDE-812}
\def\reportudemathyear{2017}
\def\reportudematheingang{\mydate}
\def\mytitle{A Global div-curl-Lemma for Mixed Boundary Conditions in Weak Lipschitz Domains\\
and a Corresponding Generalized $\Azs$-$\Ao$-Lemma in Hilbert Spaces}
\def\mytitlerepude{A Global div-curl-Lemma\\ 
for Mixed Boundary Conditions in Weak Lipschitz Domains\\
and a Corresponding Generalized $\Azs$-$\Ao$-Lemma in Hilbert Spaces}
\def\myshorttitle{A Global div-curl-Lemma}
\def\myauthorone{Dirk Pauly}
\def\myauthors{\myauthorone}
\def\myaddressone{Fakult\"at f\"ur Mathematik, Universit\"at Duisburg-Essen, Campus Essen, Germany}
\def\myemailone{dirk.pauly@uni-due.de}
\def\mykeywords{$\div$-$\curl$-lemma, compensated compactness, mixed boundary conditions, weak Lipschitz domains, Maxwell's equations}
\def\mysubjclass{35B27, 35Q61, 47B07, 46B50}
\def\mydate{\today}
\newcommand{\mylabel}[1]{\label{#1}\fbox{{\sf #1}}}}
\newcommand{\mylabel}[1]{\label{#1}}}
\numberwithin{equation}{section}}
\newcommand{\ovl}[1]{\overline{#1}}
\newcommand{\cfp}{c_{\mathsf{f,p}}}
\newcommand{\cm}{c_{\mathsf{m}}}
\newcommand{\set}[2]{\{#1\,:\,#2\}}
\newcommand{\setb}[2]{\big\{#1\,:\,#2\big\}}
\newtheorem{lem}{Lemma}[section]}
\newtheorem{lem}{Lemma}}
\newtheorem{theo}[lem]{Theorem}
\newtheorem{cor}[lem]{Corollary}
\newtheorem{rem}[lem]{Remark}
\newenvironment{acknow}{{\vspace*{1cm}\noindent\sc Acknowledgements }}{}
\newcommand{\om}{\Omega}
\newcommand{\ga}{\Gamma}
\newcommand{\gat}{\ga_{\mathsf{t}}}
\newcommand{\gan}{\ga_{\mathsf{n}}}
\newcommand{\eps}{\epsilon}
\newcommand{\reals}{\mathbb{R}}
\newcommand{\rt}{\reals^{3}}
\newcommand{\rN}{\reals^{N}}
\newcommand{\rttt}{\reals^{3\times3}}
\newcommand{\ot}{\leftarrow}
\newcommand{\hookto}{\hookrightarrow}
\newcommand{\impl}{\Rightarrow}
\newcommand{\equi}{\Leftrightarrow}
\newcommand{\qequi}{\quad\equi\quad}
\DeclareMathOperator{\id}{id}
\DeclareMathOperator{\sym}{sym}
\DeclareMathOperator{\dev}{dev}
\DeclareMathOperator{\supp}{supp}
\DeclareMathOperator{\dist}{dist}
\DeclareMathOperator{\A}{A}
\DeclareMathOperator{\As}{\A^{*}}
\DeclareMathOperator{\cA}{\mathcal{A}}
\DeclareMathOperator{\cAs}{\cA^{*}}
\DeclareMathOperator{\p}{\partial}
\DeclareMathOperator{\na}{\nabla}
\DeclareMathOperator{\rot}{rot}
\DeclareMathOperator{\rotc}{\overset{\circ}{\rot}}
\DeclareMathOperator{\Rot}{Rot}
\DeclareMathOperator{\curl}{curl}
\DeclareMathOperator{\divergence}{div}
\renewcommand{\div}{\divergence}
\DeclareMathOperator{\divc}{\overset{\circ}{\div}}
\DeclareMathOperator{\Div}{Div}
\DeclareMathOperator{\ed}{d}
\DeclareMathOperator{\cd}{\delta}
\newcommand{\csymbol}{\mathsf{C}}
\newcommand{\cgen}[3]{\overset{#1}{\csymbol}{}^{#2}_{#3}}
\newcommand{\cic}{\cgen{\circ}{\infty}{}}
\newcommand{\cicgat}{\cgen{\circ}{\infty}{\gat}}
\newcommand{\cicgan}{\cgen{\circ}{\infty}{\gan}}
\newcommand{\ct}{\cgen{}{2}{}}
\newcommand{\cicom}{\cic(\om)}
\newcommand{\cicgatom}{\cicgat(\om)}
\newcommand{\cicganom}{\cicgan(\om)}
\newcommand{\lsymbol}{\mathsf{L}}
\newcommand{\lgen}[3]{\overset{#1}{\lsymbol}{}^{#2}_{#3}}
\newcommand{\lt}{\lgen{}{2}{}}
\newcommand{\li}{\lgen{}{\infty}{}}
\newcommand{\lteps}{\lgen{}{2}{\eps}}
\newcommand{\ltmu}{\lgen{}{2}{\mu}}
\newcommand{\ltom}{\lt(\om)}
\newcommand{\liom}{\li(\om)}
\newcommand{\ltepsom}{\lteps(\om)}
\newcommand{\ltmuom}{\ltmu(\om)}
\newcommand{\hsymbol}{\mathsf{H}}
\newcommand{\hgen}[3]{\overset{#1}{\hsymbol}{}^{#2}_{#3}}
\newcommand{\ho}{\hgen{}{1}{}}
\newcommand{\htwo}{\hgen{}{2}{}}
\newcommand{\hoc}{\hgen{\circ}{1}{}}
\newcommand{\hocgat}{\hgen{\circ}{1}{\gat}}
\newcommand{\hocgan}{\hgen{\circ}{1}{\gan}}
\newcommand{\hoom}{\ho(\om)}
\newcommand{\htom}{\htwo(\om)}
\newcommand{\hocom}{\hoc(\om)}
\newcommand{\hocgatom}{\hocgat(\om)}
\newcommand{\hocganom}{\hocgan(\om)}
\newcommand{\hmo}{\hgen{}{-1}{}}
\newcommand{\hmoom}{\hmo(\om)}
\newcommand{\hmt}{\hgen{}{-2}{}}
\newcommand{\hmtom}{\hmt(\om)}
\newcommand{\rsymbol}{\mathsf{R}}
\newcommand{\rgen}[3]{\overset{#1}{\rsymbol}{}^{#2}_{#3}}
\renewcommand{\r}{\rgen{}{}{}}
\newcommand{\R}{\Rgen{}{}{}}
\newcommand{\rz}{\rgen{}{}{0}}
\newcommand{\rc}{\rgen{\circ}{}{}}
\newcommand{\rcz}{\rgen{\circ}{}{0}}
\newcommand{\rcga}{\rgen{\circ}{}{\ga}}
\newcommand{\rcgat}{\rgen{\circ}{}{\gat}}
\newcommand{\rcgan}{\rgen{\circ}{}{\gan}}
\newcommand{\rcgatz}{\rgen{\circ}{}{\gat,0}}
\newcommand{\rcganz}{\rgen{\circ}{}{\gan,0}}
\newcommand{\rom}{\r(\om)}
\newcommand{\rcom}{\rc(\om)}
\newcommand{\rczom}{\rcz(\om)}
\newcommand{\rzom}{\rz(\om)}
\newcommand{\rcgaom}{\rcga(\om)}
\newcommand{\rcgatom}{\rcgat(\om)}
\newcommand{\rcganom}{\rcgan(\om)}
\newcommand{\rcgatzom}{\rcgatz(\om)}
\newcommand{\rcganzom}{\rcganz(\om)}
\newcommand{\dsymbol}{\mathsf{D}}
\newcommand{\dgen}[3]{\overset{#1}{\dsymbol}{}^{#2}_{#3}}
\renewcommand{\d}{\dgen{}{}{}}
\newcommand{\dz}{\dgen{}{}{0}}
\newcommand{\dc}{\dgen{\circ}{}{}}
\newcommand{\dcgat}{\dgen{\circ}{}{\gat}}
\newcommand{\dcgan}{\dgen{\circ}{}{\gan}}
\newcommand{\dcgatz}{\dgen{\circ}{}{\gat,0}}
\newcommand{\dcganz}{\dgen{\circ}{}{\gan,0}}
\newcommand{\dom}{\d(\om)}
\newcommand{\dcom}{\dc(\om)}
\newcommand{\dzom}{\dz(\om)}
\newcommand{\dcgatom}{\dcgat(\om)}
\newcommand{\dcganom}{\dcgan(\om)}
\newcommand{\dcgatzom}{\dcgatz(\om)}
\newcommand{\dcganzom}{\dcganz(\om)}
\newcommand{\harmsymbol}{\mathcal{H}}
\newcommand{\harmgen}[3]{\overset{#1}{\harmsymbol}{}^{#2}_{#3}}
\newcommand{\harm}{\harmgen{}{}{}}
\newcommand{\harmom}{\harm(\om)}
\newcommand{\hilbert}{\hsymbol}
\newcommand{\hilo}{\hilbert_{1}}
\newcommand{\hilt}{\hilbert_{2}}
\newcommand{\norm}[1]{|#1|}
\newcommand{\bnorm}[1]{\big|#1\big|}
\newcommand{\normltom}[1]{\norm{#1}_{\ltom}}
\newcommand{\scp}[2]{\langle#1,#2\rangle}
\newcommand{\bscp}[2]{\big\langle#1,#2\big\rangle}
\newcommand{\scpltom}[2]{\scp{#1}{#2}_{\ltom}}
\newcommand{\preprintudemath}[5]{
\thispagestyle{empty}
\Large
\begin{center}SCHRIFTENREIHE DER FAKULT\"AT F\"UR MATHEMATIK\end{center}
\vspace*{5mm}
\begin{center}#1\end{center}
\vspace*{5mm}
\begin{center}by\end{center}
\begin{center}#2\end{center}
\vspace*{5mm}
\begin{center}#3\hspace{80mm}#4\end{center}
\newpage
\thispagestyle{empty}
\vspace*{210mm}
Received: #5
\newpage
\addtocounter{page}{-2}
\normalsize}
\title[\sc\myshorttitle]{\Large\sf\mytitle}
\author{\myauthorone}
\address{\myaddressone}
\email[\myauthorone]{\myemailone}
\keywords{\mykeywords}
\subjclass{\mysubjclass}
\date{\mydate}
\newcommand{\To}{\longrightarrow}
\newcommand{\wto}{\rightharpoonup}
\renewcommand{\cic}{\mathring\csymbol^{\infty}}
\newcommand{\ccq}[2]{\mathring\csymbol^{#1}_{#2}}
\newcommand{\cicq}[2]{\mathring\csymbol^{\infty,#1}_{#2}}
\newcommand{\ccqom}[2]{\ccq{#1}{#2}(\om)}
\newcommand{\cicqom}[2]{\cicq{#1}{#2}(\om)}
\renewcommand{\cicgat}{\cic_{\gat}}
\renewcommand{\cicgan}{\cic_{\gan}}
\renewcommand{\hoc}{\mathring\hsymbol^{1}}
\newcommand{\hmoc}{\mathring\hsymbol^{-1}}
\newcommand{\hmtc}{\mathring\hsymbol^{-2}}
\newcommand{\hmocom}{\hmoc(\om)}
\newcommand{\hmtcom}{\hmtc(\om)}
\renewcommand{\hocgat}{\hoc_{\gat}}
\renewcommand{\hocgan}{\hoc_{\gan}}
\newcommand{\hoces}{\hoc_{\emptyset}}
\newcommand{\hocesom}{\hoces(\om)}
\newcommand{\htc}{\mathring\hsymbol^2}
\renewcommand{\rc}{\mathring\rsymbol}
\renewcommand{\rcz}{\rc_{0}}
\renewcommand{\rcga}{\rc_{\ga}}
\renewcommand{\rcgat}{\rc_{\gat}}
\renewcommand{\rcgatz}{\rc_{\gat,0}}
\renewcommand{\rcgan}{\rc_{\gan}}
\renewcommand{\rcganz}{\rc_{\gan,0}}
\renewcommand{\dc}{\mathring\dsymbol}
\renewcommand{\dcgan}{\dc_{\gan}}
\renewcommand{\dcganz}{\dc_{\gan,0}}
\renewcommand{\dcgat}{\dc_{\gat}}
\renewcommand{\dcgatz}{\dc_{\gat,0}}
\renewcommand{\H}{\mathsf{H}}
\renewcommand{\R}{\mathcal{R}}
\DeclareMathOperator{\Az}{A_{0}}
\DeclareMathOperator{\Azs}{A_{0}^{*}}
\DeclareMathOperator{\cAz}{\mathcal{A}_{0}}
\DeclareMathOperator{\cAzs}{\mathcal{A}_{0}^{*}}
\DeclareMathOperator{\Ao}{A_{1}}
\DeclareMathOperator{\Aos}{A_{1}^{*}}
\DeclareMathOperator{\cAo}{\mathcal{A}_{1}}
\DeclareMathOperator{\cAos}{\mathcal{A}_{1}^{*}}
\DeclareMathOperator{\At}{A_{2}}
\DeclareMathOperator{\Ats}{A_{2}^{*}}
\newcommand{\grad}{\na}
\newcommand{\gradc}{\mathring\grad}
\renewcommand{\rotc}{\mathring\rot}
\renewcommand{\divc}{\mathring\div}
\newcommand{\cptemb}{\hookto\hspace*{-0.85em}\hookto}
\newcommand{\bbS}{\mathbb{S}}
\newcommand{\bbT}{\mathbb{T}}
\newtheorem{theorom}{Theorem}
\newtheorem{corrom}[theorom]{Corollary}
\newtheorem{remrom}[theorom]{Remark}
\begin{document}


\ifthenelse{\equal{\reportudemathyesno}{yes}}
{\preprintudemath{\mytitlerepude}{\myauthors}{\reportudemathnumber}{\reportudemathyear}{\reportudematheingang}}
{}


\begin{abstract}
We prove global and local versions of the so-called $\div$-$\curl$-lemma,
a crucial result in the homogenization theory of partial differential equations,
for mixed boundary conditions on bounded weak Lipschitz domains in 3D
with weak Lipschitz interfaces.
We will generalize our results using an abstract Hilbert space setting,
which shows corresponding results to hold
in arbitrary dimensions as well as for various differential operators.
The crucial tools and the core of our arguments are Hilbert complexes and related compact embeddings.
\end{abstract}

\vspace*{-10mm}
\maketitle
\tableofcontents


\section{Introduction}
\mylabel{introsec}

The classical $\div$-$\curl$-lemma by Murat \cite{murat1978} and Tartar \cite{tartar1979},
a famous and crucial result in the homogenization theory of partial differential equations
and often used for so-called compensated compactness, reads as follows:

\begin{theorom}[classical $\div$-$\curl$-lemma]
\mylabel{introlemzero}
Let $\om\subset\rt$ be an open set and
let $(E_{n}),(H_{n})\subset\ltom$ be two sequences bounded in $\ltom$ such that
both $(\widetilde{\curl}E_{n})$ and $(\widetilde{\div}H_{n})$ are relatively compact in $\hmoom$.
Then there exist $E,H\in\ltom$ as well as subsequences, again denoted by $(E_{n})$ and $(H_{n})$, 
such that the sequence of scalar products $(E_{n}\cdot H_{n})$ converges in the sense of distributions, i.e.,
$$\forall\,\varphi\in\cicom\qquad
\int_{\om}\varphi\,(E_{n}\cdot H_{n})
\to\int_{\om}\varphi\,(E\cdot H).$$
\end{theorom}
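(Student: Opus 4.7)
My plan is to follow the classical Murat--Tartar strategy: use a local Helmholtz-type decomposition to rewrite the quadratic expression $E_n\cdot H_n$ as a sum of bilinear pairings each involving at least one strongly convergent factor, so that the limit can be identified via a strong-weak product rule.

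First, by Banach--Alaoglu I extract subsequences with $E_n\rightharpoonup E$ and $H_n\rightharpoonup H$ in $L^2(\Omega)$. The hypothesized relative compactness of $(\widetilde{\operatorname{curl}}\,E_n)$ and $(\widetilde{\operatorname{div}}\,H_n)$ in $\mathring H^{-1}(\Omega)$, together with uniqueness of the weak limit, upgrades these to norm convergence, $\widetilde{\operatorname{curl}}\,E_n\to\widetilde{\operatorname{curl}}\,E$ and $\widetilde{\operatorname{div}}\,H_n\to\widetilde{\operatorname{div}}\,H$ in $\mathring H^{-1}(\Omega)$. For a fixed test function $\varphi\in C_c^\infty(\Omega)$, I localize by choosing a smooth bounded set, say a ball, $B$ with $\operatorname{supp}\varphi\Subset B\Subset\Omega$, and set up the Helmholtz splitting $H_n|_B=\nabla p_n+\tilde H_n$, where $p_n\in H_0^1(B)$ solves $\Delta p_n=\operatorname{div}(H_n|_B)$; elliptic regularity combined with strong convergence of the data in $H^{-1}(B)$ yields $\nabla p_n\to\nabla p$ strongly in $L^2(B)$, while $\tilde H_n$ is divergence-free on $B$ and still only weakly convergent. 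Then
$$\int_\Omega\varphi\,E_n\cdot H_n=\int_B\varphi\,E_n\cdot\nabla p_n+\int_B\varphi\,E_n\cdot\tilde H_n,$$
and the first summand converges to $\int_B\varphi\,E\cdot\nabla p$ by strong-weak duality in $L^2(B)$.

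For the second summand I represent $\tilde H_n=\operatorname{curl} A_n$ with a vector potential $A_n\in H^1(B;\mathbb{R}^3)$ in Coulomb gauge; on the simply connected smooth set $B$, this yields $\|A_n\|_{H^1(B)}\lesssim\|\tilde H_n\|_{L^2(B)}$, and Rellich extracts a subsequence with $A_n\to A$ strongly in $L^2(B)$. Using the compact support of $\varphi$ and the identity $\operatorname{curl}(\varphi E_n)=\nabla\varphi\times E_n+\varphi\operatorname{curl}E_n$, integration by parts produces
$$\int_B\varphi\,E_n\cdot\tilde H_n=\int_B(\nabla\varphi\times E_n)\cdot A_n+\langle\widetilde{\operatorname{curl}}\,E_n,\varphi A_n\rangle_{\mathring H^{-1}(\Omega),\mathring H^1(\Omega)}.$$
The first integral converges by strong-weak product ($E_n\rightharpoonup E$ in $L^2$ while $A_n\to A$ strongly), and the duality pairing converges because $\widetilde{\operatorname{curl}}\,E_n$ converges strongly in $\mathring H^{-1}(\Omega)$ while $\varphi A_n$ is bounded in $\mathring H^1(\Omega)$ with weak limit $\varphi A$. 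Applying the same identity to the limits $E,H$ (noting that $\operatorname{curl} A=\tilde H$ and $H|_B=\nabla p+\tilde H$ by passing to limits in the corresponding identities for $H_n$) identifies the total limit as $\int_\Omega\varphi\,E\cdot H$, completing the argument.

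The main obstacle I expect is the clean construction of the vector potential $A_n$ with a genuine $H^1$-bound under only an open-set hypothesis on $\Omega$; this is precisely why I pass to the smooth auxiliary domain $B$, where elliptic regularity, the Coulomb gauge, and Rellich compactness are all available off the shelf. In the paper's proper setting of weak Lipschitz domains with mixed boundary conditions this local scaffolding will have to be replaced by the Hilbert-complex / compact-embedding machinery advertised in the abstract.
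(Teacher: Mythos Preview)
Your argument is essentially correct and reproduces the classical Murat--Tartar proof. Two small points deserve explicit mention: first, the further subsequence you extract for $(A_n)$ depends on the ball $B$ and hence on $\varphi$, so you should close with the standard observation that since the limit $\int_\Omega\varphi\,E\cdot H$ is uniquely determined, every subsequence of your already-extracted subsequence has a further subsequence converging to it, whence the full subsequence converges for every $\varphi$. Second, the $H^1$ vector-potential estimate you invoke on the ball is indeed available (trivial second Betti number, smooth boundary), but you should state clearly that $\tilde H_n$ is divergence-free in $B$ without any boundary condition, and cite the corresponding version of the result.

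The paper does \emph{not} prove Theorem~I by this route; in fact Theorem~I is quoted as the classical motivating result rather than proved afresh. The paper's own machinery goes in a different direction: it proves a \emph{global} statement (Theorem~\ref{div-rot-lem}) on bounded weak Lipschitz domains with mixed boundary conditions by combining the Helmholtz decomposition $\rcgatom=\nabla\hocgatom\oplus(\rcgatom\cap\dcganzom)$ with Weck's selection theorem (the compact embedding $\rcgatom\cap\dcganom\hookrightarrow L^2$), entirely bypassing $H^1$ vector potentials. The local/distributional versions then follow by multiplying by $\varphi\in\cicom$. Your approach buys a self-contained proof of the classical statement on an arbitrary open set using only elliptic regularity on a smooth auxiliary ball and Rellich's theorem; the paper's approach buys a genuinely global conclusion under mixed boundary conditions on rough domains, and an abstract $\Azs$--$\Ao$ template that transfers verbatim to other complexes (differential forms, elasticity, biharmonic). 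As you correctly anticipate in your final paragraph, the $H^1$-potential scaffolding you use on $B$ is exactly what the paper's compact-embedding framework is designed to replace.
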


Here, $\hmoom$ denotes the dual space of $\hocom$ and the distributional extensions 
$$\widetilde{\curl}:\ltom\to\hmoom,\qquad
\widetilde{\div}:\ltom\to\hmoom$$
of $\curl$ and $\div$, respectively, are defined for $E\in\ltom$ by
\begin{align*}
\widetilde{\curl}\,E\,(\Phi)&:=\scpltom{\curl\Phi}{E},\quad
\Phi\in\hocom,\\
\widetilde{\div}\,E\,(\varphi)&:=-\scpltom{\na\varphi}{E},\quad
\varphi\in\hocom.
\end{align*}

We will prove a global version of the $\div$-$\curl$-lemma
stating that under certain (mixed tangential and normal) 
boundary conditions and (very weak) regularity assumptions on a domain $\om\subset\rt$,
see Section \ref{defsec}, the following holds:

\begin{theorom}[global $\div$-$\curl$-lemma]
\mylabel{introtheoone}
Let $\om\subset\rt$ be a bounded weak Lipschitz domain
with boundary $\ga$ and weak Lipschitz boundary parts $\gat$ and $\gan$.
Let $(E_{n})$ and $(H_{n})$ be two sequences bounded in $\ltom$,
such that $(\curl E_{n})$ and $(\div H_{n})$ are also bounded in $\ltom$
and $\nu\times E_{n}=0$ on $\gat$ and $\nu\cdot H_{n}=0$ on $\gan$. Then there exist subsequences,
again denoted by $(E_{n})$ and $(H_{n})$, such that $(E_{n})$, $(\curl E_{n})$ and $(H_{n})$, $(\div H_{n})$ 
converge weakly to $E$, $\curl E$ and $H$, $\div H$ in $\ltom$, respectively,
and the inner products converge as well, i.e.,
$$\int_{\om}E_{n}\cdot H_{n}\to\int_{\om}E\cdot H.$$
\end{theorom}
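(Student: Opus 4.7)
My plan is to reduce the convergence of the bilinear integral to the Picard--Weber--Weck compact embedding for the mixed-boundary Maxwell space $\rcgat(\om) \cap \dcganz(\om) \cptemb \ltom$, via a Helmholtz-type orthogonal splitting adapted to the boundary parts $\gat$ and $\gan$. Both ingredients --- the decomposition and the compact embedding in the weak Lipschitz / weak Lipschitz interface setting --- must already be available from the preceding sections of the paper, and they carry essentially all the analytic content; once they are in hand, the argument is a clean Rellich-plus-integration-by-parts calculation.

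First, I would write
\[
E_{n} = \na u_{n} + E_{n}^{\ast},\qquad u_{n} \in \hocgat(\om),
\]
with $E_{n}^{\ast}$ the $L^{2}$-orthogonal remainder, which is automatically divergence-free in $\om$ with vanishing normal trace on $\gan$, i.e.\ lies in $\dcganz(\om)$. Since $\na u_{n}$ has vanishing rotation and vanishing tangential trace on $\gat$, it follows that $E_{n}^{\ast} \in \rcgat(\om)$ with $\curl E_{n}^{\ast} = \curl E_{n}$. Orthogonality together with a Friedrichs/Poincar\'e-type estimate on $\hocgat(\om)$ (applied after subtracting a mean if $\gat$ has empty relative interior) bounds $(u_{n})$ in $\hocgat(\om)$ and $(E_{n}^{\ast})$ in $\rcgat(\om) \cap \dcganz(\om)$. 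Extracting subsequences, I then obtain $u_{n} \to u$ strongly in $\ltom$ by Rellich, $E_{n}^{\ast} \to E^{\ast}$ strongly in $\ltom$ by the compact embedding, and all relevant weak limits of $E_{n}$, $\curl E_{n}$, $H_{n}$, $\div H_{n}$ in $\ltom$, with $E = \na u + E^{\ast}$ by uniqueness of weak limits.

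To pass to the limit in the inner product I would split
\[
\int_{\om} E_{n} \cdot H_{n} = \int_{\om} \na u_{n} \cdot H_{n} + \int_{\om} E_{n}^{\ast} \cdot H_{n}.
\]
The first term is legitimately integrated by parts --- because $u_{n} \in \hocgat(\om)$ and $H_{n} \in \dcgan(\om)$ supply the complementary boundary conditions and the corresponding trace-pairing vanishes --- yielding
\(
\int_{\om} \na u_{n} \cdot H_{n} = - \int_{\om} u_{n} \, \div H_{n},
\)
which converges to $-\int_{\om} u \, \div H = \int_{\om} \na u \cdot H$ by the strong-weak pairing of $u_{n} \to u$ and $\div H_{n} \rightharpoonup \div H$. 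The second term is handled in parallel: strong convergence of $E_{n}^{\ast}$ against weak convergence of $H_{n}$ gives $\int_{\om} E_{n}^{\ast} \cdot H_{n} \to \int_{\om} E^{\ast} \cdot H$. Summing reconstitutes $\int_{\om} E \cdot H$. The main obstacle is the compact embedding of Maxwell fields under mixed weak Lipschitz boundary conditions --- this is where the geometric hypotheses of the theorem really enter --- but once that is in place, the argument is essentially a duality between the Dirichlet factor of $E_{n}$ (controlled by Rellich) and its co-closed factor (controlled by Maxwell compactness).
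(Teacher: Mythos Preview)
Your proposal is correct and follows essentially the same route as the paper's proof: the same Helmholtz decomposition $E_{n}=\na u_{n}+\tilde E_{n}$ with $u_{n}\in\hocgatom$ and $\tilde E_{n}\in\rcgatom\cap\dcganzom$, followed by Rellich on $u_{n}$, Weck's selection theorem on $\tilde E_{n}$, and the same integration-by-parts/strong-weak pairing to pass to the limit in the inner product. Even the handling of the degenerate case $\gat=\emptyset$ via mean subtraction matches the paper's convention.
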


A local version similar to the classical $\div$-$\curl$-lemma from Theorem \ref{introlemzero}
(distributional like convergence for arbitrary domains and no boundary conditions needed) 
is then immediately implied.

\begin{corrom}[local $\div$-$\curl$-lemma]
\mylabel{introcortwo}
Let $\om\subset\rt$ be an open set.
Let $(E_{n})$ and $(H_{n})$ be two sequences bounded in $\ltom$,
such that $(\curl E_{n})$ and $(\div H_{n})$ are also bounded in $\ltom$. Then there exist subsequences,
again denoted by $(E_{n})$ and $(H_{n})$, such that $(E_{n})$, $(\curl E_{n})$ and $(H_{n})$, $(\div H_{n})$ 
converge weakly to $E$, $\curl E$ and $H$, $\div H$ in $\ltom$, respectively,
and the inner products converge in the distributional sense as well, i.e.,
for all $\varphi\in\cicom$ it holds
$$\int_{\om}\varphi\,(E_{n}\cdot H_{n})\to\int_{\om}\varphi\,(E\cdot H).$$
\end{corrom}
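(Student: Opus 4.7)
My plan is to first extract weak subsequential limits on all of $\om$ and then deduce the distributional convergence of the inner products by cutting off and invoking the global div-curl-lemma (Theorem \ref{introtheoone}) on small balls, where a weak Lipschitz framework is trivially available.

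Since $(E_{n})$, $(\curl E_{n})$, $(H_{n})$, and $(\div H_{n})$ are bounded in $\ltom$, reflexivity of $\ltom$ together with the closedness of $\curl$ and $\div$ on $\ltom$ allows me to extract a common subsequence, still denoted $(E_{n}),(H_{n})$, along which $E_{n}\wto E$, $\curl E_{n}\wto \curl E$, $H_{n}\wto H$, and $\div H_{n}\wto\div H$ weakly in $\ltom$, with $E,H\in\ltom$ and $\curl E,\div H\in\ltom$. With $E,H$ fixed in this way, it remains to verify that $\int_{\om}\varphi\,(E_{n}\cdot H_{n})\to\int_{\om}\varphi\,(E\cdot H)$ for every $\varphi\in\cicom$.

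For the localisation step, given such a $\varphi$, I cover the compact set $\operatorname{supp}\varphi$ by finitely many open balls $B_{1},\dots,B_{N}$ with $\overline{B_{i}}\subset\om$ and use a subordinate partition of unity $\chi_{i}\in\cic(B_{i})$ to write $\varphi=\sum_{i}\chi_{i}\varphi$. By linearity it then suffices to treat the case $\operatorname{supp}\varphi\subset B$ for a single open ball $B$ with $\overline{B}\subset\om$; such a $B$ is smooth and hence a bounded weak Lipschitz domain, on which I choose $\gat:=\partial B$ and $\gan:=\emptyset$. Setting $F_{n}:=\varphi E_{n}$, the compact support of $\varphi$ in $B$ guarantees a vanishing tangential trace of $F_{n}$ on $\partial B=\gat$; moreover $(F_{n})$ and $\curl F_{n}=\varphi\,\curl E_{n}+\na\varphi\times E_{n}$ are bounded in $L^{2}(B)$, while $(H_{n}|_{B})$ and $(\div H_{n}|_{B})$ are bounded in $L^{2}(B)$, and no normal trace condition on $H_{n}$ is required because $\gan=\emptyset$.

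Theorem \ref{introtheoone} applied on $B$ to $(F_{n})$ and $(H_{n}|_{B})$ then yields a subsequence along which
\[
\int_{\om}\varphi\,(E_{n}\cdot H_{n})=\int_{B}F_{n}\cdot H_{n}\To\int_{B}(\varphi E)\cdot H=\int_{\om}\varphi\,(E\cdot H),
\]
the weak limits on $B$ being forced to equal $\varphi E$ and $H|_{B}$ by uniqueness of weak limits. To eliminate the dependence of this subsequence on $\varphi$, I invoke the subsequence principle: the bounded real sequence $(\int_{\om}\varphi\,(E_{n}\cdot H_{n}))_{n}$ admits, by the above argument, a sub-subsequence converging to $\int_{\om}\varphi\,(E\cdot H)$ no matter which subsequence one starts from, and hence the full sequence itself converges to this limit. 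The only technical subtlety I anticipate is the verification that $F_{n}=\varphi E_{n}$ truly lies in the homogeneous space $\rcgat(B)$ with $\gat=\partial B$; this reduces to the standard fact that an $L^{2}$-field with $L^{2}$ curl and compact support in an open set, extended by zero, belongs to the corresponding homogeneous boundary-trace space.
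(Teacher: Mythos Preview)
Your argument is correct and follows essentially the same route as the paper: multiply $E_{n}$ by the test function $\varphi$ so that $\varphi E_{n}$ acquires full tangential boundary conditions, and then invoke the global $\div$-$\curl$-lemma with $\gat=\ga$ and $\gan=\emptyset$, followed by the subsequence principle. The only superfluous step is the partition of unity: since $\varphi E_{n}$ already has compact support in $\om$, you may apply Theorem~\ref{introtheoone} directly on any single bounded smooth (hence weak Lipschitz) domain $\om'$ with $\operatorname{supp}\varphi\subset\om'\subset\overline{\om'}\subset\om$, exactly as the paper does in Corollary~\ref{div-rot-lem-loc} and Remark~\ref{div-rot-rem}, without first splitting $\varphi$ into pieces supported in individual balls.
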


For details see Theorem \ref{div-rot-lem}, Corollary \ref{div-rot-lem-loc},
Theorem \ref{diveps-murot-lem}, and Theorem \ref{class-div-rot-lem}.

We will also generalize these results to a natural Hilbert complex setting. For this, let 
$$\Az\!:\!D(\Az)\subset\H_{0}\to\H_{1},\qquad
\Ao\!:\!D(\Ao)\subset\H_{1}\to\H_{2}$$
be two (possibly unbounded) densely defined and closed linear operators 
on three Hilbert spaces $\H_{0}$, $\H_{1}$, $\H_{2}$ with Hilbert space adjoints 
$$\Azs\!:\!D(\Azs)\subset\H_{1}\to\H_{0},\qquad
\Aos\!:\!D(\Aos)\subset\H_{2}\to\H_{1}.$$
Moreover, let the complex property $\Ao\Az=0$ be satisfied, i.e.,
$$R(\Az)\subset N(\Ao).$$
In Theorem \ref{gen-div-rot-lem}
we present our central result of this contribution which reads as follows:

\begin{theorom}[generalized $\div$-$\curl$-lemma: $\Azs$-$\Ao$-lemma]
\mylabel{introtheothree}
Let $D(\Ao)\cap D(\Azs)\cptemb\H_{1}$ be compact.
If $(x_{n})\subset D(\Ao)$ and $(y_{n})\subset D(\Azs)$ 
are two $D(\Ao)$-bounded resp. $D(\Azs)$-bounded sequences, 
then there exist $x\in D(\Ao)$ and $y\in D(\Azs)$ as well as subsequences, 
again denoted by $(x_{n})$ and $(y_{n})$, 
such that $(x_{n})$ and $(y_{n})$ converge weakly in $D(\Ao)$ and $D(\Azs)$ to $x$ and $y$, respectively, 
together with the convergence of the inner products
$$\scp{x_{n}}{y_{n}}_{\H_{1}}\to\scp{x}{y}_{\H_{1}}.$$
\end{theorom}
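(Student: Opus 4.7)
The plan is to reduce the convergence $\scp{x_n}{y_n}_{\H_1}\to\scp{x}{y}_{\H_1}$, via the orthogonal Helmholtz-type decomposition of $\H_1$ supplied by the Hilbert complex $\Az,\Ao$, to three pieces on which either strong $\H_1$-convergence holds (driven by the compact embedding hypothesis) or the ambient space is finite-dimensional. As a first step, since $D(\Ao)$ and $D(\Azs)$ equipped with their graph norms are Hilbert spaces, the boundedness assumptions give, along subsequences, $x_n\wto x$ in $D(\Ao)$ and $y_n\wto y$ in $D(\Azs)$; weak continuity of bounded operators, in particular of orthogonal projections, then transfers this convergence to every orthogonal component.

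Next I would invoke the standard Hilbert-complex machinery. The complex property $R(\Az)\subset N(\Ao)$ implies (via the adjoint identity $\scp{\Aos z}{\Az u}_{\H_1}=\scp{z}{\Ao\Az u}_{\H_2}=0$) that $R(\Aos)\subset N(\Azs)$, and the compact embedding $D(\Ao)\cap D(\Azs)\cptemb\H_1$ forces, by a Picard-type argument, both ranges $R(\Az)$, $R(\Aos)$ to be closed and the harmonic space $K_1:=N(\Ao)\cap N(\Azs)$ to be finite-dimensional. This gives the orthogonal decomposition
\[
\H_1 = R(\Az)\oplus K_1\oplus R(\Aos).
\]
Splitting $x_n=x_n^R+x_n^K+x_n^{R^*}$ and $y_n=y_n^R+y_n^K+y_n^{R^*}$ along it, orthogonality annihilates the six off-diagonal contributions and
\[
\scp{x_n}{y_n}_{\H_1}
=\scp{x_n^R}{y_n^R}_{\H_1}
+\scp{x_n^K}{y_n^K}_{\H_1}
+\scp{x_n^{R^*}}{y_n^{R^*}}_{\H_1}.
\]

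The crucial regularity observation is that $R(\Az)\subset N(\Ao)\subset D(\Ao)$ and $R(\Aos)\subset N(\Azs)\subset D(\Azs)$. Hence $y_n^R\in D(\Ao)$ with $\Ao y_n^R=0$, while $y_n^R=y_n-y_n^K-y_n^{R^*}\in D(\Azs)$ with $\Azs y_n^R=\Azs y_n$; symmetrically $x_n^{R^*}\in D(\Ao)\cap D(\Azs)$ with $\Azs x_n^{R^*}=0$ and $\Ao x_n^{R^*}=\Ao x_n$. Thus both $(y_n^R)$ and $(x_n^{R^*})$ are bounded in the graph norm of $D(\Ao)\cap D(\Azs)$, and the compact embedding upgrades, after a further subsequence, their weak $\H_1$-convergence to strong $\H_1$-convergence. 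Pairing each strong limit with the weak $\H_1$-limit of its partner sequence yields convergence of the first and third inner products, while the finite-dimensionality of $K_1$ handles the middle one. Summing and using orthogonality of the limits reassembles $\scp{x}{y}_{\H_1}$.

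The principal obstacle I anticipate is precisely this regularity book-keeping: one has to check carefully that the orthogonal projections of $x_n$ and $y_n$ onto the three Helmholtz summands really do inherit the domain/image information needed to place $y_n^R$ and $x_n^{R^*}$ into the compactly embedded space $D(\Ao)\cap D(\Azs)$ with controlled graph norm. Once that is secured, the rest is the familiar weak-times-strong convergence routine.
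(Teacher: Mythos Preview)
Your argument is correct: the three-fold Helmholtz splitting $\H_1=R(\Az)\oplus K_1\oplus R(\Aos)$ together with the observation that $y_n^R$ and $x_n^{R^*}$ land in $D(\Ao)\cap D(\Azs)$ with bounded graph norm does exactly what you claim, and the weak-times-strong routine closes. The regularity book-keeping you flag as the obstacle is indeed fine, for the reasons you give.

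It is worth noting that the paper's primary proof takes a somewhat different and slightly more economical route. Rather than decomposing both sequences symmetrically into three pieces, it decomposes only $x_n$, and only into \emph{two} pieces, via
\[
D(\Ao)=R(\cAz)\oplus_{\H_1}\bigl(D(\Ao)\cap N(\Azs)\bigr),
\]
writing $x_n=\Az z_n+\tilde x_n$ with $z_n\in D(\cAz)$ and $\tilde x_n\in D(\Ao)\cap N(\Azs)$. The first inner product is then shifted to $\H_0$ via the adjoint relation $\scp{\Az z_n}{y_n}_{\H_1}=\scp{z_n}{\Azs y_n}_{\H_0}$, and one uses the compactness of $D(\cAz)\hookrightarrow\H_0$ (a consequence of the assumed compact embedding) to get $z_n\to z$ strongly in $\H_0$; the second piece $\tilde x_n$ sits in $D(\Ao)\cap D(\Azs)$ and converges strongly in $\H_1$ by the hypothesis directly. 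Your symmetric three-term approach is in fact the strategy the paper adopts \emph{later}, for the more general distributional version of the lemma, and the paper explicitly remarks that this furnishes an alternative proof of the present statement. What the paper's two-piece argument buys is brevity and the avoidance of the finite-dimensional harmonic term as a separate case; what your approach buys is symmetry in $x_n$ and $y_n$ and staying entirely inside $\H_1$, which is arguably cleaner and is precisely why it generalizes more readily.
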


\begin{remrom}
\mylabel{introremfour}
The compact embedding $D(\Ao)\cap D(\Azs)\cptemb\H_{1}$ reads in Theorem \ref{introtheoone} as
$$\setb{E\in\ltom}{\curl E\in\ltom,\,\div E\in\ltom,\,\nu\times E|_{\gat}=0,\,\nu\cdot E|_{\gan}=0}\cptemb\ltom,$$
which is known as Weck's selection theorem, see Lemma \ref{weckstlem}.
\end{remrom}

In  Theorem \ref{gen-div-rot-lem-gen-theo} the latter theorem 
is even generalized to a distributional version as follows:

\begin{theorom}[generalized $\div$-$\curl$-lemma: generalized $\Azs$-$\Ao$-lemma]
\mylabel{introtheofive}
Let the ranges $R(\Az)$ and $R(\Ao)$ be closed and let $N(\Ao)\cap N(\Azs)$ be finite-dimensional.
Moreover, let $(x_{n}),(y_{n})\subset\H_{1}$ be two bounded sequences such that
$(\widetilde{\Ao}\,x_{n})$ and $(\widetilde{\Azs}\,y_{n})$ 
are relatively compact in $D(\Aos)'$ and $D(\Az)'$, respectively.
Then there exist $x,y\in\H_{1}$ as well as subsequences, again denoted by $(x_{n})$ and $(y_{n})$, 
such that $(x_{n})$ and $(y_{n})$ converge weakly in $\H_{1}$ to $x$ and $y$, respectively, 
together with the convergence of the inner products
$$\scp{x_{n}}{y_{n}}_{\H_{1}}\to\scp{x}{y}_{\H_{1}}.$$
\end{theorom}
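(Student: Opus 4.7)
The plan is to exploit the orthogonal Helmholtz-type decomposition of $\H_{1}$ induced by the Hilbert complex. Because $R(\Az)$ and $R(\Ao)$ are closed (hence $R(\Aos)$ is closed as well, by the closed range theorem) and $R(\Az)\subset N(\Ao)=R(\Aos)^{\perp}$, one has the orthogonal sum
$$\H_{1}=R(\Az)\oplus K\oplus R(\Aos),\qquad K:=N(\Ao)\cap N(\Azs).$$
Writing $x_{n}=x_{n}^{1}+x_{n}^{K}+x_{n}^{2}$ and $y_{n}=y_{n}^{1}+y_{n}^{K}+y_{n}^{2}$ accordingly, the mutual orthogonality of the three summands gives
$$\scp{x_{n}}{y_{n}}_{\H_{1}}=\scp{x_{n}^{1}}{y_{n}^{1}}_{\H_{1}}+\scp{x_{n}^{K}}{y_{n}^{K}}_{\H_{1}}+\scp{x_{n}^{2}}{y_{n}^{2}}_{\H_{1}},$$
and I would treat the three pieces independently along a common subsequence on which $x_{n}\wto x$ and $y_{n}\wto y$ weakly in $\H_{1}$ (so that also $x_{n}^{\bullet}\wto x^{\bullet}$ and $y_{n}^{\bullet}\wto y^{\bullet}$, the Helmholtz projectors being continuous).

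The middle term is immediate: since $K$ is finite-dimensional by assumption, a further extraction yields norm convergence of $x_{n}^{K}$ and $y_{n}^{K}$, whence $\scp{x_{n}^{K}}{y_{n}^{K}}_{\H_{1}}\to\scp{x^{K}}{y^{K}}_{\H_{1}}$. For the outer two pieces I would use the reduced operators $\cAz\colon D(\Az)\cap R(\Azs)\to R(\Az)$ and $\cAos\colon D(\Aos)\cap R(\Ao)\to R(\Aos)$, which are bicontinuous bijections by the closed range theorem. This lets me represent $x_{n}^{1}=\Az u_{n}$ and $y_{n}^{2}=\Aos w_{n}$ with $(u_{n})$ bounded in $D(\Az)$ and $(w_{n})$ bounded in $D(\Aos)$; after passing to subsequences, $u_{n}\wto u$ in $D(\Az)$ and $w_{n}\wto w$ in $D(\Aos)$. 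Using $N(\Azs)=R(\Az)^{\perp}$ (so that $y_{n}-y_{n}^{1}\perp R(\Az)$) and $N(\Ao)=R(\Aos)^{\perp}$ together with the definitions of the distributional extensions, one obtains
$$\scp{x_{n}^{1}}{y_{n}^{1}}_{\H_{1}}=\scp{\Az u_{n}}{y_{n}}_{\H_{1}}=(\widetilde{\Azs}\,y_{n})(u_{n}),\qquad
\scp{x_{n}^{2}}{y_{n}^{2}}_{\H_{1}}=\scp{x_{n}}{\Aos w_{n}}_{\H_{1}}=(\widetilde{\Ao}\,x_{n})(w_{n}).$$

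Both $\widetilde{\Azs}\colon\H_{1}\to D(\Az)'$ and $\widetilde{\Ao}\colon\H_{1}\to D(\Aos)'$ are bounded linear, so $y_{n}\wto y$ and $x_{n}\wto x$ give weak convergence of $\widetilde{\Azs}\,y_{n}$ and $\widetilde{\Ao}\,x_{n}$ in the respective dual spaces; combined with the hypothesized relative compactness, the full (sub)sequences therefore converge \emph{in norm} to $\widetilde{\Azs}\,y$ and $\widetilde{\Ao}\,x$. Norm convergence of functionals paired with weak convergence of bounded arguments gives $(\widetilde{\Azs}\,y_{n})(u_{n})\to(\widetilde{\Azs}\,y)(u)=\scp{x^{1}}{y^{1}}_{\H_{1}}$ and $(\widetilde{\Ao}\,x_{n})(w_{n})\to\scp{x^{2}}{y^{2}}_{\H_{1}}$; summing the three limits yields the claim. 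I expect the only delicate step to be the bookkeeping that rewrites the Helmholtz pairings as evaluations of $\widetilde{\Azs}\,y_{n}$ and $\widetilde{\Ao}\,x_{n}$, which relies on the orthogonalities $R(\Az)\perp N(\Azs)$ and $R(\Aos)\perp N(\Ao)$; the remaining analytic ingredients (closed range theorem, finite-dimensionality of $K$, weak--strong dual pairing) are then routine.
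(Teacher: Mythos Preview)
Your proof is correct and follows the same overall strategy as the paper: pass to weakly convergent subsequences, split along the three-term Helmholtz decomposition $\H_{1}=R(\Az)\oplus_{\H_{1}}N_{0,1}\oplus_{\H_{1}}R(\Aos)$, handle the finite-dimensional kernel piece trivially, and treat the outer pieces by a strong--weak pairing argument.

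The one tactical difference is worth recording. The paper first proves (via the topological isomorphism $\widetilde{\cAo}:R(\Aos)\to D(\cAos)'$ and a norm identity for $\widetilde{\Ao}$) that relative compactness of $(\widetilde{\Ao}\,x_{n})$ in $D(\Aos)'$ is \emph{equivalent} to relative compactness of $(\pi_{R(\Aos)}x_{n})$ in $\H_{1}$; it then uses strong convergence of the projections $\pi_{R(\Aos)}x_{n}\to\pi_{R(\Aos)}x$ and $\pi_{R(\Az)}y_{n}\to\pi_{R(\Az)}y$ directly in the pairing $\scp{x_{n}}{y_{n}}_{\H_{1}}=\scp{\pi_{R(\Aos)}x_{n}}{y_{n}}+\scp{\pi_{N_{0,1}}x_{n}}{y_{n}}+\scp{x_{n}}{\pi_{R(\Az)}y_{n}}$. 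You instead stay in the dual space: you pull back through $\cAz^{-1}$ and $\cAos^{-1}$ to potentials $u_{n},w_{n}$, rewrite the outer terms as $(\widetilde{\Azs}\,y_{n})(u_{n})$ and $(\widetilde{\Ao}\,x_{n})(w_{n})$, and exploit that the functionals converge in norm (weak convergence plus relative compactness) against weakly convergent, bounded arguments. Your route is a little more elementary in that it avoids the auxiliary isomorphism lemma; the paper's route has the advantage of isolating a clean characterization of the dual-space compactness hypothesis that is of independent interest.
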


Here, the distributional extensions 
$$\widetilde{\Ao}:\H_{1}\to D(\Aos)',\qquad
\widetilde{\Azs}:\H_{1}\to D(\Az)'$$
of $\Ao$ and $\Azs$, respectively, are defined for $x\in\H_{1}$ by
\begin{align*}
\widetilde{\Ao}\,x\,(\phi)&:=\scp{\Aos\phi}{x}_{\H_{1}},\quad
\phi\in D(\Aos),\\
\widetilde{\Azs}\,x\,(\varphi)&:=\scp{\Az\varphi}{x}_{\H_{1}},\quad
\varphi\in D(\Az).
\end{align*}

In Section \ref{appsec} we apply these results to various differential operators in 3D and ND,
appearing, e.g., in classical and generalized electro-magnetics, for the biharmonic equation,
in general relativity, for gravitational waves, and in the theory 
of linear elasticity and plasticity.
We obtain also an interesting additional version of the 
global $\div$-$\curl$-lemma, compare to Theorem \ref{class-div-rot-lem-imp}.

\begin{theorom}[alternative global $\div$-$\curl$-lemma]
\mylabel{introtheosix}
Let $\om\subset\rt$ be a bounded strong Lipschitz domain with trivial topology.
Moreover, let $(E_{n}),(H_{n})\subset\ltom$ be two bounded sequences such that either
$(\widehat{\curl}E_{n})$ and $(\widetilde{\div}H_{n})$ are relatively compact in $\hmocom$ and $\hmoom$, respectively,
or $(\widetilde{\curl}E_{n})$ and $(\widehat{\div}H_{n})$ are relatively compact in $\hmoom$ and $\hmocom$, respectively.
Then there exist $E,H\in\ltom$ as well as subsequences, again denoted by $(E_{n})$ and $(H_{n})$, 
such that $E_{n}$ and $H_{n}$ converge weakly in $\ltom$, respectively, 
together with the convergence of the inner products
$$\scp{E_{n}}{H_{n}}_{\ltom}\to\scp{E}{H}_{\ltom}.$$
\end{theorom}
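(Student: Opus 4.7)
I restrict to the first case; the second follows by exchanging $(E,\curl,\gat)\leftrightarrow(H,\div,\gan)$ in what follows. After extracting weak $\ltom$-limits $E_n\wto E$ and $H_n\wto H$, the goal $\scpltom{E_n}{H_n}\to\scpltom{E}{H}$ will be reached via a Helmholtz decomposition of $H_n$ tuned to the $\hmoom$-hypothesis, a Poincaré-type curl potential for the divergence-free part, and Weck's compact embedding (Remark \ref{introremfour}).

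\textbf{Decomposition and compactness.} Using the orthogonal splitting $\ltom=\na\hocom\oplus\ker\widetilde{\div}$, write $H_n=\na v_n+G_n$ with $v_n\in\hocom$ and $\widetilde{\div}G_n=0$. The identity $-\Delta v_n=-\widetilde{\div}H_n$ in $\hmoom$ together with the Lax--Milgram/Poincaré isomorphism $-\Delta:\hocom\to\hmoom$ turns the relative compactness of $\widetilde{\div}H_n$ into strong convergence $v_n\to v$ in $\hocom$ along a subsequence; hence $\na v_n\to\na v$ strongly in $\ltom$ and $G_n\wto G:=H-\na v$ weakly. Trivial topology gives $\ker\widetilde{\div}=\curl R(\om)$ (vanishing Neumann harmonic fields), and gauging by orthogonality to $N(\curl|_{R(\om)})=\na H^{1}(\om)$ produces $\chi_n$ in the space $\{\chi\in\ltom:\curl\chi\in\ltom,\,\div\chi\in\ltom,\,\nu\cdot\chi|_\ga=0\}$ with $\div\chi_n=0$, $\curl\chi_n=G_n$, and $\|\chi_n\|\leq C\|G_n\|_{\ltom}$ by closed range. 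Remark \ref{introremfour} with $\gat=\emptyset$, $\gan=\ga$ gives the compact embedding of this space into $\ltom$, so $\chi_n\to\chi$ strongly in $\ltom$ along a further subsequence.

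\textbf{Pairing and the main obstacle.} Split $\scpltom{E_n}{H_n}=\scpltom{E_n}{\na v_n}+\scpltom{E_n}{\curl\chi_n}$. The first summand converges to $\scpltom{E}{\na v}$ by weak--strong duality. The second equals $\widehat{\curl}^{\mathrm{ext}}E_n(\chi_n)$, where $\widehat{\curl}^{\mathrm{ext}}E_n\in R(\om)'$ is the canonical extension of $\widehat{\curl}E_n$ from $H^{1}(\om)$-test fields to $R(\om)$-test fields (well defined by density of $H^{1}(\om)$ in $R(\om)$ for strong Lipschitz $\om$), with $\|\widehat{\curl}^{\mathrm{ext}}E_n\|_{R(\om)'}\leq\|E_n\|_{\ltom}$. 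The main obstacle is precisely this step: the hypothesis provides relative compactness of $\widehat{\curl}E_n$ only in the strictly weaker topology of $\hmocom=(H^{1}(\om))'$, and I must pair it with the $R\cap\{\nu\cdot\chi|_\ga=0\}$-bounded, $\ltom$-strongly convergent sequence $\chi_n$. I plan to approximate each $\chi_n$ by smooth (hence $H^{1}$-regular) fields $\chi_n^{(k)}$ converging to $\chi_n$ in the $R(\om)$-norm, perform a diagonal extraction $\chi_n^{(k_n)}\in H^{1}(\om)$ with controlled approximation error, and interchange the limits $n,k\to\infty$ using the uniform $R(\om)'$-bound on $\widehat{\curl}^{\mathrm{ext}}E_n$. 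Equivalently, the whole argument may be packaged as an application of the generalized $\Azs$-$\Ao$-lemma (Theorem \ref{introtheofive}) to the complex $\gradc:\hocom\to\ltom$, $\curlc:\rcga(\om)\to\ltom$, with the relative-compactness hypotheses of that theorem verified through the Helmholtz decomposition above.
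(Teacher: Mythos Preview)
Your decomposition strategy is sound and mirrors the paper's own proof of the non-distributional version (Theorem~\ref{div-rot-lem}), but the step you flag as the main obstacle is a genuine gap that your proposed resolution does not close. The diagonal approximation cannot work: if $\chi_n\notin\hoom$ (which is the generic situation for a merely strong Lipschitz domain, where $\rom\cap\dcganom\not\subset\hoom$), then any sequence $\chi_n^{(k)}\in\hoom$ with $\chi_n^{(k)}\to\chi_n$ in the $\rom$-norm must satisfy $\norm{\chi_n^{(k)}}_{\hoom}\to\infty$ as $k\to\infty$. Hence your diagonal choice $\chi_n^{(k_n)}$ is not bounded in $\hoom$, and the pairing $\widehat{\curl}E_n(\chi_n^{(k_n)})$ is not controlled by the $(\hoom)'$-norm of $\widehat{\curl}E_n$; the relative compactness hypothesis in $\hmocom$ gives you nothing here. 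The uniform $\rom'$-bound $\norm{\widehat{\curl}^{\mathrm{ext}}E_n}_{\rom'}\leq\norm{E_n}_{\ltom}$ only lets you estimate the approximation error, not pass to the limit in the main term. Your alternative ``packaging'' into Theorem~\ref{introtheofive} has exactly the same defect: that theorem requires relative compactness of $\widetilde{\Ao}E_n$ in $D(\Aos)'=\rom'$, a strictly stronger topology than the $\hmocom$ in which the hypothesis is stated, and nothing in your Helmholtz decomposition of $H_n$ supplies this upgrade for $E_n$.

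The paper's resolution is structural rather than approximative. It identifies (Remark~\ref{gen-div-rot-lem-rem-3}, Lemma~\ref{gen-div-rot-lem-rem-3-lem}) the dual $\rom'$ with the space $\{F\in\hmocom:\widehat{\divc}F\in\hmocom\}$ equipped with an equivalent norm, and observes that the constraint $\widehat{\divc}\,\widetilde{\rotc}E_n=0$ is \emph{automatic} for every $E_n\in\ltom$. Thus on the range of $\widetilde{\rotc}$ the $\hmocom$- and $\rom'$-topologies coincide, and relative compactness of $\widehat{\curl}E_n$ in $\hmocom$ \emph{is} relative compactness of $\widetilde{\rotc}E_n$ in $\rom'$. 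Only then does Theorem~\ref{introtheofive} (equivalently Theorem~\ref{diveps-murot-lem-gen}) apply. The missing ingredient in your argument is precisely this identification of the dual space; without it, neither the approximation scheme nor the appeal to the abstract $\Azs$-$\Ao$-lemma goes through.
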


Here, $\hmocom:=\hoom'$ and the distributional extensions 
$$\widehat{\curl}:\ltom\to\hmocom,\qquad
\widehat{\div}:\ltom\to\hmocom$$
of $\curl$ and $\div$, respectively, are defined for $E\in\ltom$ by
\begin{align*}
\widehat{\curl}\,E\,(\Phi)&:=\scpltom{\curl\Phi}{E},\quad
\Phi\in\hoom,\\
\widehat{\div}\,E\,(\varphi)&:=-\scpltom{\na\varphi}{E},\quad
\varphi\in\hoom.
\end{align*}

The $\div$-$\curl$-lemma, which serves as a central result in the theory of compensated compactness,
see the original papers by Murat \cite{murat1978} and Tartar \cite{tartar1979}
with crucial applications in \cite{coifmanlionsmeyersemmescompcomphardy}
or \cite{evans1990,struwe2008},
and its variants and extensions have plenty of important applications.
For an extensive discussion and a historical overview of the $\div$-$\curl$-lemma see \cite{tartar2009}.
More recent discussions can be found, e.g., in \cite{brianecasadodiazmurat2009,tartar2015}
as well as in \cite{chenliglobrig}
and in the nice paper \cite{waurick2018a} of Marcus Waurick.
The latter two contributions utilize a Hilbert/Banach space setting as well,
but from different perspectives.
In \cite{waurick2018a} Waurick achieved closely related
results using different methods and proofs, see Section \ref{sec-gen-divcurl-lem}.
Interesting applications to homogenization of partial differential equations 
have recently been given in \cite{waurick2018b}.
From our personal\footnote{The idea of this paper came up a few years ago in 2012,
when S\"oren Bartels asked the author about the $\div$-$\curl$-lemma and for a simpler proof.
Moreover, in 2016, the $\div$-$\curl$-lemma in a form similar to the one in this article
was subject of lots of discussions with Marcus Waurick, when he as well as the author were lecturing
Special Semester Courses on Maxwell's equations and related topics invited by Ulrich Langer
at the Johann Radon Institute for Computational and Applied Mathematics (RICAM) in Linz.} 
point of view, although the results of \cite{chenliglobrig,waurick2018a} are slightly more general,
our methods and proofs are easier and more canonical and hence give deeper insight into the underlying structure 
and the core of the main result and thus of all $\div$-$\curl$-type lemmas.

The $\div$-$\curl$-lemma is widely used in the theory of homogenization of (nonlinear) partial differential equations,
see, e.g., \cite{struwe2008}.
Compensated compactness has many important applications 
in nonlinear partial differential equations and calculus of variations, e.g., 
in the partial regularity theory of stationary harmonic maps, see, e.g., 
\cite{freiremuellerstruwe1998,evans1991,riviere2007}.
Numerical applications can be found, e.g., in \cite{bartels2010}.
It is further a crucial tool in the homogenization of stochastic partial differential equations,
especially with certain random coefficients, see, e.g.,
the survey \cite{alexanderian2015}
and the literature cited therein, e.g., \cite{glorianeukammotto2015}. 

Let us also mention that the $\div$-$\curl$-lemma
is particularly useful to treat homogenization of problems arising in plasticity,
see, e.g., a recent contribution on this topic \cite{roegerschweizer2017},
for which \cite{schweizer2017} provides the important key $\div$-$\curl$-lemma.
As in \cite{schweizer2017,roegerschweizer2017} $\hoom$-potentials are used, 
these contributions are restricted to smooth, e.g., $\ct$ or convex,
domains and to full boundary conditions.
This clearly shows that the more general and stronger $\div$-$\curl$-lemma results
presented in the contribution at hand are of great importance
and so far unknown to the community.
The same $\hoom$-detour as in \cite{schweizer2017,roegerschweizer2017} 
is used in the recent contribution \cite{kozonoyanagisawa2013} 
where $\div$-$\curl$-type lemmas are presented
which also allow for inhomogeneous boundary conditions.
This unnecessarily high regularity assumption of $\hoom$-fields
excludes results like
\cite{kozonoyanagisawa2013,schweizer2017,roegerschweizer2017}
to be applied to important applications 
which are stated, e.g., in Lipschitz domains.

Generally, for problems related to Maxwell's equations
the detour over $\hoom$ and using Rellich's selection theorem
instead of using Weck's selection theorem, see Lemma \ref{weckstlem},
seems to be the wrong way to deal with such equations.
Most of the arguments simply fail, and if not,
the results are usually limited to smooth domains and trivial topologies.
Mixed boundary conditions cannot be treated properly.
Since the early 1970's, see the original paper by Weck \cite{weckmax}
for Weck's selection theorem,
it is well-known, that the $\hoom$-detour is often not helpful
and does not lead to satisfying results.
Surprisingly, this fact appears to be unknown to a wider community.

\section{Definitions and Preliminaries}
\mylabel{defsec}

Let $\om\subset\rt$ be a bounded weak Lipschitz domain, 
see \cite[Definition 2.3]{bauerpaulyschomburgmcpweaklip} for details, with boundary $\ga:=\p\om$, 
which is divided into two relatively open weak Lipschitz subsets $\gat$ 
and $\gan:=\ga\setminus\ovl{\gat}$ (its complement),
see \cite[Definition 2.5]{bauerpaulyschomburgmcpweaklip} for details.
Note that strong Lipschitz (graph of Lipschitz functions) implies weak Lipschitz (Lipschitz manifolds)
for the boundary as well as the interface. 
Throughout this section we shall assume the latter regularity on $\om$ and $\gat$.

Recently, in \cite{bauerpaulyschomburgmcpweaklip}, 
Weck's selection theorem, also known as the Maxwell compactness property, has been shown
to hold for such bounded weak Lipschitz domains and mixed boundary conditions.
More precisely, the following holds:

\begin{lem}[Weck's selection theorem]
\label{weckstlem}
The embedding
$\rcgatom\cap\dcganom\cptemb\ltom$
is compact.
\end{lem}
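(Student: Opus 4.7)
The plan is to reduce the question to Rellich's classical compactness theorem through a localization-and-flattening argument, with a Helmholtz-type decomposition handling the resulting model case. By the weak Lipschitz assumption on $\om$, $\gat$, and $\gan$, I would first cover $\overline{\om}$ by finitely many open sets $U_{j}$ equipped with bi-Lipschitz charts $\Phi_{j}$ that straighten each $U_{j}\cap\om$ to a half-cube and simultaneously send $U_{j}\cap\gat$ and $U_{j}\cap\gan$ to prescribed, disjoint unions of flat faces. A subordinate partition of unity $\{\varphi_{j}\}$ then allows, for a bounded sequence $(E_{n})\subset\rcgatom\cap\dcganom$, the decomposition $E_{n}=\sum_{j}\varphi_{j}E_{n}$, so that it suffices to prove relative $\ltom$-compactness of each piece $(\varphi_{j}E_{n})$ separately.

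Multiplication by the cutoff $\varphi_{j}$ perturbs $\curl$ and $\div$ only by $\ltom$-bounded lower-order terms ($\na\varphi_{j}\times E_{n}$ and $\na\varphi_{j}\cdot E_{n}$), so the localized sequence stays bounded in the same space and is supported in $U_{j}$. Pushing forward by $\Phi_{j}$ preserves $\ltom$-boundedness, and, by the standard transformation formulas, turns $\curl$ and $\div$ into first-order operators of the same structure with bounded measurable matrix coefficients, while converting the homogeneous tangential/normal traces on $\gat\cap U_{j}$ and $\gan\cap U_{j}$ into the corresponding conditions on the flat face sets of the model. This reduces matters to relative $\ltom$-compactness of a bounded sequence in the analogous Maxwell space on a half-cube with vanishing tangential trace on one union of flat faces and vanishing normal trace on the complementary union.

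On the flat model I would then carry out a Helmholtz-type decomposition $E=\na u+\Psi$, choosing $u$ to solve a mixed Dirichlet--Neumann problem so that $\na\cdot\Psi=0$ and $\Psi$ carries the full $\curl$-information of $E$, with the two boundary pieces split so that $u=0$ on the transformed $\gat$ and $\nu\cdot\na u=\nu\cdot E$ on the transformed $\gan$. Elliptic regularity for this mixed problem on the half-cube yields $u$ in $H^{1}$, and Rellich's theorem provides $\ltom$-compactness of $(\na u_{n})$. For the remainder $\Psi_{n}$, reflection across the flat faces extends it to a compactly supported whole-space field with $\ltom$-bounded $\curl$ and $\div$; by Fourier analysis this extended field lies in $H^{1}$ of $\rt$, so it is $L^{2}$-compact as well, and hence so is $(\Psi_{n})$.

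The main obstacle is carrying out both the flattening and the elliptic-regularity step cleanly near the interface $\overline{\gat}\cap\overline{\gan}$, where the weak Lipschitz (rather than strong Lipschitz) assumption only guarantees that this interface is a Lipschitz submanifold and not a graph. Constructing bi-Lipschitz charts that simultaneously straighten $\om$, $\gat$, and $\gan$ is delicate and is exactly the place where the machinery of \cite{bauerpaulyschomburgmcpweaklip} is needed; moreover, in the model case the reflection argument for $\Psi$ must be compatible with two distinct boundary conditions meeting along a Lipschitz submanifold, which is the technical heart of the weak Lipschitz version of Weck's theorem and the reason a direct appeal to that reference is the cleanest route.
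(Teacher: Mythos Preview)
The paper does not actually prove this lemma; it simply cites \cite[Theorem 4.7]{bauerpaulyschomburgmcpweaklip}. Your outline (localize with a partition of unity, flatten via bi-Lipschitz charts, reduce to a half-cube model with flat tangential/normal faces, and handle the model case by a Helmholtz-type splitting plus reflection) is indeed the overall strategy pursued in that reference, and your final paragraph correctly identifies the interface $\overline{\gat}\cap\overline{\gan}$ as the place where the real work lies.

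There is, however, a genuine gap in your treatment of the gradient part on the model domain. You write that elliptic regularity for the mixed Dirichlet--Neumann problem ``yields $u$ in $H^{1}$, and Rellich's theorem provides $\ltom$-compactness of $(\na u_{n})$.'' Rellich's theorem gives compactness of $(u_{n})$ in $\ltom$ from a bound in $\hoom$; it says nothing about $(\na u_{n})$, which from $u_{n}\in\hoom$ is only \emph{bounded} in $\ltom$. To conclude that $(\na u_{n})$ is relatively compact in $\ltom$ you need $u_{n}$ bounded in some $H^{1+s}$, $s>0$, so that $\na u_{n}$ lies in a space compactly embedded in $\ltom$. On the flat half-cube with Dirichlet data on one set of faces and Neumann data on the complementary set this higher regularity is exactly the delicate point: at the Dirichlet--Neumann interface the leading singularity is of order $r^{1/2}$, so in general one only gets $u\in H^{3/2-\epsilon}$, not $H^{2}$. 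That is still enough (since $\na u\in H^{1/2-\epsilon}\hookrightarrow\ltom$ compactly), but it has to be argued, and it is precisely the content of the careful extension/reflection constructions in \cite{bauerpaulyschomburgmcpweaklip}. As written, your step from $u\in H^{1}$ to compactness of $(\na u_{n})$ is not justified.
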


For a proof see \cite[Theorem 4.7]{bauerpaulyschomburgmcpweaklip}. 
A short historical overview of Weck's selection theorem
is given in the introduction of \cite{bauerpaulyschomburgmcpweaklip},
see also the original paper \cite{weckmax} and 
\cite{picardcomimb,webercompmax,costabelremmaxlip,witschremmax,jochmanncompembmaxmixbc,leisbook}
for simpler proofs and generalizations.

Here the usual Lebesgue and Sobolev spaces are denoted by $\ltom$ and $\hoom$ as well as
\begin{align*}
\rom:=\setb{E\in\ltom}{\rot E\in\ltom},\quad 
\dom:=\setb{E\in\ltom}{\div E\in\ltom},
\end{align*}
where we prefer to write $\rot$ instead of $\curl$.
$\rom$ and $\dom$ are also written as $H(\rot,\om)$, $H(\curl,\om)$ 
and $H(\div,\om)$ in the literature. With the help of test functions and test vector fields
$$\cicgatom:=\setb{\varphi|_\om}{\varphi\in\cic(\rt),\,\dist(\supp\varphi,\gat)>0}$$
we define the closed subspaces
\begin{align}
\mylabel{defstark}
\hocgatom:=\overline{\cicgatom}^{\hoom},\quad 
\rcgatom:=\overline{\cicgatom}^{\rom},\quad
\dcganom:=\overline{\cicganom}^{\dom}
\end{align}
as closures of test functions and vector fields, respectively. 
If $\gat=\ga$ we skip the index $\ga$ and write
$$\cic(\om)=\cic_{\ga}(\om),\qquad
\hoc(\om)=\hoc_{\ga}(\om),\qquad
\rc(\om)=\rc_{\ga}(\om),\qquad
\dc(\om)=\dc_{\ga}(\om).$$
In \eqref{defstark} homogeneous scalar, tangential and normal traces 
on $\gat$ and $\gan$ are generalized.
For the pathological case $\gat=\emptyset$, we put
$$\hocesom
:=\hoom\cap\reals^{\bot_{\ltom}}
=\setb{u\in\hoom}{\int_{\om}u=0}$$
in order to still have a Poincar\'e estimate for $u\in\hocesom$.
Let us emphasize that our assumptions also allow for Rellich's selection theorem, i.e., the embedding
\begin{align}
\mylabel{rellichst}
\hocgatom\cptemb\ltom
\end{align}
is compact, see, e.g., \cite[Theorem 4.8]{bauerpaulyschomburgmcpweaklip}. 
By density we have the two rules of integration by parts
\begin{align}
\mylabel{partintna}
\forall\,u&\in\hocgatom
&
\forall\,H&\in\dcganom
&
\scpltom{\na u}{H}&=-\scpltom{u}{\div H},\\
\mylabel{partintrot}
\forall\,E&\in\rcgatom
&
\forall\,H&\in\rcganom
&
\scpltom{\rot E}{H}&=\scpltom{E}{\rot H}.
\end{align}
We emphasize that,
besides Weck's selection theorem, the resulting Maxwell estimates (Friedrichs/Poincar\'e type estimates), 
Helmholtz decompositions, closed ranges, continuous and compact inverse operators,
and an appropriate electro-magneto static solution theory
for bounded weak Lipschitz domains and mixed boundary conditions,
another important result has been shown in \cite{bauerpaulyschomburgmcpweaklip}.
It holds
\begin{align}
\label{defschwach}
\begin{split}
\hocgatom&= 
\setb{u\in\hoom}{\scpltom{\na u}{\Phi}=-\scpltom{u}{\div\Phi}\text{ for all }\Phi\in\cicganom},\\
\rcgatom&
=\setb{E\in\rom}{\scpltom{\rot E}{\Phi}=\scpltom{E}{\rot\Phi}\text{ for all }\Phi\in\cicganom},\\
\dcganom&
=\setb{H\in\dom}{\scpltom{\div H}{\varphi}=-\scpltom{H}{\na\varphi}\text{ for all }\varphi\in\cicgatom},
\end{split}
\end{align}
i.e., strong and weak definitions of boundary conditions coincide,
see \cite[Theorem 4.5]{bauerpaulyschomburgmcpweaklip}.
Furthermore, we define the closed subspaces of irrotational and solenoidal vector fields
\begin{align*}
\rzom:=\setb{E\in\rom}{\rot E=0},\quad 
\dzom:=\setb{E\in\dom}{\div E=0},
\end{align*}
respectively, as well as 
$$\rcgatzom:=\rcgatom\cap\rzom,\quad
\dcganzom:=\dcganom\cap\dzom.$$

A direct consequence of Lemma \ref{weckstlem} is the compactness of the unit ball in 
$$\harmom:=\rcgatzom\cap\dcganzom,$$
the space of so-called Dirichlet-Neumann fields. Hence $\harmom$ is finite-dimensional.
Another immediate consequence of Weck's selection theorem, Lemma \ref{weckstlem}, using a standard indirect argument,
is the so-called Maxwell estimate, i.e., there exists $\cm>0$ such that
\begin{align}
\mylabel{maxest}
\forall\,E&\in\rcgatom\cap\dcganom\cap\harmom^{\bot_{\ltom}}
&
\normltom{E}&\leq\cm\big(\normltom{\rot E}+\normltom{\div E}\big)
\intertext{or, equivalently,}
\forall\,E&\in\rcgatom\cap\dcganom
&
\normltom{E-\pi E}&\leq\cm\big(\normltom{\rot E}+\normltom{\div E}\big),
\end{align}
see \cite[Theorem 5.1]{bauerpaulyschomburgmcpweaklip},
where $\pi:\ltom\to\harmom$ denotes the $\ltom$-orthonormal projector onto
the Dirichlet-Neumann fields.
Recent estimates for the Maxwell constant $\cm$ can be found in \cite{paulymaxconst0,paulymaxconst1,paulymaxconst2}.
Analogously, Rellich's selection theorem \eqref{rellichst} shows the Friedrichs/Poincar\'e estimate
\begin{align}
\mylabel{fpest}
\exists\,\cfp>0\quad\forall\,u\in\hocgatom\qquad
\normltom{u}\leq\cfp\normltom{\na u},
\end{align}
see \cite[Theorem 4.8]{bauerpaulyschomburgmcpweaklip}.
By the projection theorem, applied to the densely defined and closed (unbounded) linear operator
$$\na:\hocgatom\subset\ltom\longrightarrow\ltom$$
with (Hilbert space) adjoint
$$\na^{*}=-\div:\dcganom\subset\ltom\longrightarrow\ltom,$$
where we have used \eqref{defschwach}, we get the simple Helmholtz decomposition
\begin{align}
\mylabel{helmsim}
\ltom
&=\na\hocgatom\oplus_{\ltom}\dcganzom,
\end{align}
see \cite[Theorem 5.3 or (13)]{bauerpaulyschomburgmcpweaklip},
which immediately implies
\begin{align}
\mylabel{helmsimtwo}
\rcgatom
&=\na\hocgatom\oplus_{\ltom}\big(\rcgatom\cap\dcganzom\big)
\end{align}
as $\na\hocgatom\subset\rcgatzom$. 
Here $\oplus_{\ltom}$ in the decompositions \eqref{helmsim} and \eqref{helmsimtwo}
denotes the orthogonal sum in the Hilbert space $\ltom$.
By \eqref{fpest}, the range $\na\hocgatom$ is closed in $\ltom$,
see also \cite[Lemma 5.2]{bauerpaulyschomburgmcpweaklip}.
Note that we call \eqref{helmsim} a simple Helmholtz decomposition, 
since the refined Helmholtz decomposition
$$\ltom=\na\hocgatom\oplus_{\ltom}\harmom\oplus_{\ltom}\rot\rcganom$$
holds as well, 
see \cite[Theorem 5.3]{bauerpaulyschomburgmcpweaklip},
where also $\rot\rcganom$ is closed in $\ltom$
as a consequence of \eqref{maxest}, 
see \cite[Lemma 5.2]{bauerpaulyschomburgmcpweaklip}.

\section{The div-rot-Lemma}
\mylabel{divrotsec}

From now on we use synonymously
the notion $\div$-$\curl$-lemma and $\div$-$\rot$-lemma.
Let $\om\subset\rt$ be a bounded weak Lipschitz domain with weak Lipschitz interfaces
as introduced in Section \ref{defsec}.

\begin{theo}[global $\div$-$\rot$-lemma]
\mylabel{div-rot-lem}
Let $(E_{n})\subset\rcgatom$ and $(H_{n})\subset\dcganom$ be two sequences bounded in $\rom$ and $\dom$, respectively.
Then there exist $E\in\rcgatom$ and $H\in\dcganom$ as well as subsequences, again denoted by $(E_{n})$ and $(H_{n})$, 
such that
\begin{itemize}
\item
$E_{n}\wto E$ in $\rcgatom$,
\item
$H_{n}\wto H$ in $\dcganom$,
\item
$\scpltom{E_{n}}{H_{n}}\to\scpltom{E}{H}$.
\end{itemize}
\end{theo}

\begin{proof}
We pick subsequences, again denoted by $(E_{n})$ and $(H_{n})$, 
such that $(E_{n})$ and $(H_{n})$ converge weakly in $\rcgatom$ and $\dcganom$ to
$E\in\rcgatom$ and $H\in\dcganom$, respectively. By the simple Helmholtz decomposition \eqref{helmsimtwo},
we have the orthogonal decomposition $\rcgatom\ni E_{n}=\na u_{n}+\tilde{E}_{n}$ with some $u_{n}\in\hocgatom$
and $\tilde{E}_{n}\in\rcgatom\cap\dcganzom$. Then $(u_{n})$ is bounded in $\hoom$ by orthogonality 
and the Friedrichs/Poincar\'e estimate \eqref{fpest}.
$(\tilde{E}_{n})$ is bounded in $\rom\cap\dom$ by orthogonality and $\rot\tilde{E}_{n}=\rot E_{n}$, $\div\tilde{E}_{n}=0$.
Hence, using Rellich's and Weck's selection theorems, i.e., \eqref{rellichst} and Lemma \ref{weckstlem},
there exist $u\in\hocgatom$ and $\tilde{E}\in\rcgatom\cap\dcganzom$
and we can extract two subsequences, again denoted by $(u_{n})$ and $(\tilde{E}_{n})$, 
such that $u_{n}\rightharpoonup u$ in $\hocgatom$ and $u_{n}\to u$ in $\ltom$
as well as $\tilde{E}_{n}\rightharpoonup\tilde{E}$ in $\rcgatom\cap\dcganzom$ 
and $\tilde{E}_{n}\to\tilde{E}$ in $\ltom$. 
We have $E=\na u+\tilde{E}$, giving the simple Helmholtz decomposition for $E$, 
as, e.g., for all $\varphi\in\cicom$
\begin{align*}
\scpltom{E}{\varphi}
&\ot\scpltom{E_{n}}{\varphi}
=\scpltom{\na u_{n}}{\varphi}
+\scpltom{\tilde{E}_{n}}{\varphi}
\to\scpltom{\na u}{\varphi}
+\scpltom{\tilde{E}}{\varphi}.
\end{align*}
Then by \eqref{partintna}
\begin{align*}
\scpltom{E_{n}}{H_{n}}
&=\scpltom{\na u_{n}}{H_{n}}
+\scpltom{\tilde{E}_{n}}{H_{n}}
=-\scpltom{u_{n}}{\div H_{n}}
+\scpltom{\tilde{E}_{n}}{H_{n}}\\
&\to-\scpltom{u}{\div H}
+\scpltom{\tilde{E}}{H}
=\scpltom{\na u}{H}
+\scpltom{\tilde{E}}{H}
=\scpltom{E}{H},
\end{align*}
completing the proof.
\end{proof}

\begin{cor}[local $\div$-$\rot$-lemma]
\mylabel{div-rot-lem-loc}
Let $(E_{n})\subset\rom$ and $(H_{n})\subset\dom$ be two sequences bounded in $\rom$ and $\dom$, respectively.
Then there exist $E\in\rom$ and $H\in\dom$ as well as subsequences, again denoted by $(E_{n})$ and $(H_{n})$, 
such that $E_{n}\wto E$ in $\rom$ and $H_{n}\wto H$ in $\dom$ together with
the distributional convergence
$$\forall\,\varphi\in\cicom\qquad\scpltom{\varphi\,E_{n}}{H_{n}}\to\scpltom{\varphi\,E}{H}.$$
\end{cor}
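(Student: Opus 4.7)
The plan is to deduce the local statement from the global Theorem~\ref{div-rot-lem} by multiplying one of the sequences by a fixed test function and localizing to a nicely chosen intermediate subdomain. Since $\rom$ and $\dom$ are Hilbert spaces, the bounded sequences $(E_{n}),(H_{n})$ admit, after a first extraction of subsequences (still denoted the same), weak limits $E\in\rom$ and $H\in\dom$; in particular $E_{n}\wto E$ and $H_{n}\wto H$ in $\ltom$. It therefore suffices to show, for every fixed $\varphi\in\cicom$, that $\scpltom{\varphi E_{n}}{H_{n}}\to\scpltom{\varphi E}{H}$.

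To this end, pick a bounded strong Lipschitz subdomain $\om'$ (for instance, a finite union of open balls) with $\supp\varphi\subset\om'$ and $\overline{\om'}\subset\om$, and equip $\p\om'$ with the trivial decomposition $\gat:=\p\om'$, $\gan:=\emptyset$, which fits the framework of Section~\ref{defsec}. Set $\tilde E_{n}:=\varphi E_{n}|_{\om'}$ and $\tilde H_{n}:=H_{n}|_{\om'}$. The product rule $\rot(\varphi E_{n})=\varphi\,\rot E_{n}+\na\varphi\times E_{n}\in\ltom$, the compact support of $\varphi E_{n}$ inside $\om'$, and standard mollification place $(\tilde E_{n})$ in $\rc(\om')$ and bound it there, while $(\tilde H_{n})\subset D(\om')$ is trivially bounded in $D(\om')$.

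Theorem~\ref{div-rot-lem} applied on $\om'$ then yields, after passing to a further subsequence, $L^{2}(\om')$-weak limits $\tilde E,\tilde H$ together with $\int_{\om'}\tilde E_{n}\cdot\tilde H_{n}\to\int_{\om'}\tilde E\cdot\tilde H$. Uniqueness of weak $L^{2}$-limits identifies $\tilde E=\varphi E|_{\om'}$ and $\tilde H=H|_{\om'}$; since $\varphi E_{n}\cdot H_{n}$ is supported in $\om'$, this reads $\scpltom{\varphi E_{n}}{H_{n}}\to\scpltom{\varphi E}{H}$ along the extracted sub-subsequence. A standard subsequence-of-subsequence argument, namely that every sub-subsequence of the first-step subsequence produces, by the same reasoning, a further sub-subsequence converging to the \emph{same} limit $\scpltom{\varphi E}{H}$, upgrades this to convergence of the entire first-step subsequence, and the latter is independent of $\varphi$. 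As $\varphi\in\cicom$ was arbitrary, this finishes the proof.

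The only mild technical point is the claim $\varphi E_{n}\in\rc(\om')$: this is the well-known fact that an $\rom$-field with compact support inside a domain belongs to the closure of $\cic$-vector fields there, obtained by convolution with a sufficiently small mollifier; no refined boundary regularity of $\om'$ is needed for this step, only the distance between $\supp\varphi$ and $\p\om'$.
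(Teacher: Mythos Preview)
Your proof is correct, but it takes a slightly more elaborate route than the paper. Recall that in this section the standing assumption is that $\om$ itself is a bounded weak Lipschitz domain. The paper therefore applies Theorem~\ref{div-rot-lem} directly on $\om$ with $\gat:=\ga$ and $\gan:=\emptyset$: for fixed $\varphi\in\cicom$ the sequence $(\varphi E_{n})$ is bounded in $\rc(\om)=\rcgatom$ and $(H_{n})$ is bounded in $\dom=\dcganom$, and the conclusion follows. No auxiliary subdomain $\om'$ is needed. Your localization to a strong Lipschitz $\om'\Subset\om$ is an unnecessary detour under the standing hypothesis, but it buys you something real: your argument works verbatim for an arbitrary open set $\om$, which is precisely what Remark~\ref{div-rot-rem} records. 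The subsequence-of-subsequence step you spell out---ensuring that one fixed subsequence works for every $\varphi$---is implicit in the paper's one-line proof and is handled correctly in your write-up.
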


\begin{proof}
Let $\gat:=\ga$ and hence $\gan=\emptyset$.
$(\varphi\,E_{n})$ is bounded in $\rcgaom$
and $(H_{n})$ is bounded in $\dom$.
Theorem \ref{div-rot-lem} shows the assertion.
\end{proof}

\begin{rem}
\mylabel{div-rot-rem}
We note that the boundedness of $(E_{n})$ and $(H_{n})$ 
in local spaces is sufficient for Corollary \ref{div-rot-lem-loc} to hold.
Hence, no regularity or boundedness assumptions on $\om$ are needed, i.e.,
Corollary \ref{div-rot-lem-loc} holds for an arbitrary open set $\om\subset\rt$.
Moreover, $\varphi\in\cicom$ may be replaced by $\varphi\in\ccqom{1}{}$ or even $\varphi\in\ccqom{0,1}{}$,
the space of Lipschitz continuous functions vanishing in a neighbourhood of $\ga$.
\end{rem}

\section{Generalizations}
\mylabel{gensec}

The idea of the proof of Theorem \ref{div-rot-lem} can be generalized.

\subsection{Functional Analysis Toolbox}

Let $\A\!:\!D(\A)\subset\H_{1}\to\H_{2}$ be a (possibly unbounded) 
closed and densely defined linear operator on two Hilbert spaces $\H_{1}$ and $\H_{2}$
with adjoint $\As\!:\!D(\As)\subset\H_{2}\to\H_{1}$.
Note $(\A^{*})^{*}=\ovl{\A}=\A$, i.e., $(\A,\As)$ is a dual pair.
By the projection theorem the Helmholtz type decompositions
\begin{align}
\mylabel{helm}
\H_{1}=N(\A)\oplus_{\H_{1}}\ovl{R(\As)},\quad
\H_{2}=N(\As)\oplus_{\H_{2}}\ovl{R(\A)}
\end{align}
hold, where we introduce the notation $N$ for the kernel (or null space)
and $R$ for the range of a linear operator.
We can define the reduced operators
\begin{align*}
\cA&:=\A|_{\ovl{R(\As)}}:D(\cA)\subset\ovl{R(\As)}\to\ovl{R(\A)},&
D(\cA)&:=D(\A)\cap N(\A)^{\bot_{\H_{1}}}=D(\A)\cap\ovl{R(\As)},\\
\cAs&:=\As|_{\ovl{R(\A)}}:D(\cAs)\subset\ovl{R(\A)}\to\ovl{R(\As)},&
D(\cAs)&:=D(\As)\cap N(\As)^{\bot_{\H_{2}}}=D(\As)\cap\ovl{R(\A)},
\end{align*}
which are also closed and densely defined linear operators.
We note that $\cA$ and $\cAs$ are indeed adjoint to each other, i.e.,
$(\cA,\cAs)$ is a dual pair as well. Now the inverse operators 
$$\cA^{-1}:R(\A)\to D(\cA),\qquad
(\cAs)^{-1}:R(\As)\to D(\cAs)$$
exist and are bijective, 
since $\cA$ and $\cAs$ are injective by definition.
Furthermore, by \eqref{helm} we have
the refined Helmholtz type decompositions
\begin{align}
\label{DacA}
D(\A)&=N(\A)\oplus_{\H_{1}}D(\cA),&
D(\As)&=N(\As)\oplus_{\H_{2}}D(\cAs)
\intertext{and thus we obtain for the ranges}
\label{RacA}
R(\A)&=R(\cA),&
R(\As)&=R(\cAs).
\end{align}

By the closed range theorem and the closed graph theorem we get immediately the following.

\begin{lem}
\label{poincarerange}
The following assertions are equivalent:
\begin{itemize}
\item[\bf(i)] 
$\exists\,c_{\A}\,\,\in(0,\infty)$ \quad 
$\forall\,x\in D(\cA)$ \qquad
$\norm{x}_{\H_{1}}\leq c_{\A}\norm{\A x}_{\H_{2}}$
\item[\bf(i${}^{*}$)] 
$\exists\,c_{\As}\in(0,\infty)$ \quad 
$\forall\,y\in D(\cAs)$ \quad\;
$\norm{y}_{\H_{2}}\leq c_{\As}\norm{\As y}_{\H_{1}}$
\item[\bf(ii)] 
$R(\A)=R(\cA)$ is closed in $\H_{2}$.
\item[\bf(ii${}^{*}$)] 
$R(\As)=R(\cAs)$ is closed in $\H_{1}$.
\item[\bf(iii)] 
$\cA^{-1}:R(\A)\to D(\cA)$ is continuous and bijective.
\item[\bf(iii${}^{*}$)] 
$(\cAs)^{-1}:R(\As)\to D(\cAs)$ is continuous and bijective.
\end{itemize}
In case that one of the latter assertions is true,
e.g., (ii), $R(\A)$ is closed, we have
\begin{align*}
\H_{1}
&=N(\A)\oplus_{\H_{1}}R(\As),
&
\H_{2}
&=N(\As)\oplus_{\H_{2}}R(\A),\\
D(\A)
&=N(\A)\oplus_{\H_{1}}D(\cA),
&
D(\As)
&=N(\As)\oplus_{\H_{2}}D(\cAs),\\
D(\cA)
&=D(\A)\cap R(\As),
&
D(\cAs)
&=D(\As)\cap R(\A),
\end{align*}
and 
\begin{align*}
\cA:D(\cA)\subset R(\As)\to R(\A),\quad
\cAs:D(\cAs)\subset R(\A)\to R(\As).
\end{align*}
\end{lem}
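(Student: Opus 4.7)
The plan is to derive every equivalence from two functional-analytic staples: Banach's closed range theorem (which immediately yields (ii) $\Leftrightarrow$ (ii${}^{*}$)) and the bounded inverse theorem (which turns closedness of the range into the Poincar\'e-type estimate). I would run the argument along the chain (iii) $\Rightarrow$ (i) $\Rightarrow$ (ii) $\Rightarrow$ (iii), and then invoke duality for the starred versions.

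First, (iii) $\Rightarrow$ (i) is just reading off the norm estimate from the continuity of $\cA^{-1}\!:\!R(\A)\to D(\cA)$, since the graph norm on $D(\cA)$ dominates the $\H_{1}$-norm. For (i) $\Rightarrow$ (ii), I would take a sequence $y_{n}=\A x_{n}\in R(\A)=R(\cA)$ with $x_{n}\in D(\cA)$ and $y_{n}\to y$ in $\H_{2}$; the estimate (i) applied to differences shows that $(x_{n})$ is Cauchy in $\H_{1}$, hence $x_{n}\to x$, and closedness of $\A$ gives $x\in D(\A)$ with $\A x=y$. Since $\ovl{R(\As)}$ is a closed subspace of $\H_{1}$, also $x\in D(\cA)$, so $R(\A)$ is closed. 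For (ii) $\Rightarrow$ (iii), note that $\cA\!:\!D(\cA)\to R(\A)$ is injective by construction (as $D(\cA)\subset\ovl{R(\As)}=N(\A)^{\bot_{\H_{1}}}$) and surjective by \eqref{RacA}. Equipping $D(\cA)$ with the graph norm makes it a Banach space, and closedness of $R(\A)$ in $\H_{2}$ provides completeness of the target. The bounded inverse theorem then yields continuity of $\cA^{-1}$.

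By replacing the dual pair $(\A,\As)$ with $(\As,\A)$, using $(\As)^{*}=\A$, the identical argument with $\cAs$ in the role of $\cA$ gives (iii${}^{*}$) $\Rightarrow$ (i${}^{*}$) $\Rightarrow$ (ii${}^{*}$) $\Rightarrow$ (iii${}^{*}$). The two chains are linked by Banach's closed range theorem, which supplies (ii) $\Leftrightarrow$ (ii${}^{*}$), completing the equivalence of all six statements. The final structural list of consequences is then immediate: closedness of $R(\A)$ and $R(\As)$ allows us to drop the closures in the Helmholtz decompositions \eqref{helm} and in \eqref{DacA}, while the identities $D(\cA)=D(\A)\cap R(\As)$ and $D(\cAs)=D(\As)\cap R(\A)$ follow directly from the definitions of the reduced operators together with these closed-range identifications.

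The main obstacle is not deep but bureaucratic: I must verify that $\cA$ inherits closedness from $\A$ and that $D(\cA)$ equipped with the graph norm is complete---both of which follow because $\ovl{R(\As)}$ is a closed subspace of $\H_{1}$---and then keep the duality symmetric enough that (i${}^{*}$)--(iii${}^{*}$) genuinely is the same argument for $(\As,\A)$ rather than a parallel proof requiring its own verification. Once this bookkeeping is in place, no further estimates or compactness arguments are needed: the entire content of the lemma reduces to the closed range and bounded inverse theorems.
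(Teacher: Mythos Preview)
Your proposal is correct and follows essentially the same route as the paper: the paper does not give a detailed proof but simply states that the lemma follows ``immediately'' from the closed range theorem and the closed graph theorem. Your chain (iii) $\Rightarrow$ (i) $\Rightarrow$ (ii) $\Rightarrow$ (iii) via the bounded inverse theorem, together with Banach's closed range theorem for (ii) $\Leftrightarrow$ (ii${}^{*}$) and the duality swap $(\A,\As)\leftrightarrow(\As,\A)$, is exactly the unpacking of that one-line justification.
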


\begin{rem}
\label{remconstants}
For the ``best'' constants $c_{\A}$, $c_{\As}$ the following holds:
The Rayleigh quotients 
$$\frac{1}{c_{\A}}
:=\inf_{0\neq x\in D(\cA)}\frac{\norm{\A x}_{\H_{2}}}{\norm{x}_{\H_{1}}},\quad
\frac{1}{c_{\As}}
:=\inf_{0\neq y\in D(\cAs)}\frac{\norm{\As y}_{\H_{1}}}{\norm{y}_{\H_{2}}}$$
coincide, i.e., $c_{\A}=c_{\As}\in(0,\infty]$.
\end{rem}

\begin{lem}
\mylabel{cptequi}
The following assertions are equivalent:
\begin{itemize}
\item[\bf(i)]
$D(\cA)\cptemb\hilo$ is compact.
\item[\bf(i${}^{*}$)]
$D(\cAs)\cptemb\hilt$ is compact.
\item[\bf(ii)]
$\cA^{-1}:R(\A)\to R(\As)$ is compact.
\item[\bf(ii${}^{*}$)]
$(\cAs)^{-1}:R(\As)\to R(\A)$ is compact.
\end{itemize}
If one of these assertions holds true, e.g., (i), $D(\cA)\cptemb\hilo$ is compact,
then the assertions of Lemma \ref{poincarerange} and Remark \ref{remconstants} 
hold with $c_{\A}=c_{\As}\in(0,\infty)$.
Especially, the Friedrichs/Poincar\'e type estimates hold,
all ranges are closed and the inverse operators 
$$\cA^{-1}:R(\A)\to R(\As),\quad
(\cAs)^{-1}:R(\As)\to R(\A)$$
are compact with norms 
$\bnorm{\cA^{-1}}_{R(\A),R(\As)}
=\bnorm{(\cAs)^{-1}}_{R(\As),R(\A)}
=c_{\A}$.
\end{lem}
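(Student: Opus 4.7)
The plan is to run the main argument once for (i) $\Rightarrow$ (ii), obtain (ii) $\Leftrightarrow$ (ii*) by duality, and recover (i*) by a symmetric argument; the reverse implication (ii) $\Rightarrow$ (i) is essentially immediate. Along the way the Friedrichs/Poincar\'e estimate will be extracted from the compactness hypothesis, which in turn activates Lemma \ref{poincarerange} to deliver closed ranges and bounded inverses, so that the ``Especially''-clause reduces to bookkeeping once all four equivalences are in place.

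First I would derive the estimate in Lemma \ref{poincarerange}(i) from (i) via the standard indirect argument: if it fails, pick $(x_{n})\subset D(\cA)$ with $\norm{x_{n}}_{\H_{1}}=1$ and $\norm{\A x_{n}}_{\H_{2}}\to 0$. Then $(x_{n})$ is bounded in the graph norm, so (i) produces a subsequence converging in $\H_{1}$ to some $x$; closedness of $\A$ places $x\in N(\A)$, while $D(\cA)\subset\ovl{R(\As)}$ places $x\in\ovl{R(\As)}$, and the orthogonal decomposition \eqref{helm} forces $x=0$, contradicting $\norm{x}_{\H_{1}}=1$. Remark \ref{remconstants} upgrades this to the matching estimate for $\cAs$, and Lemma \ref{poincarerange} now supplies closed ranges $R(\A)$, $R(\As)$ together with a bounded $\cA^{-1}\colon R(\A)\to R(\As)$. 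To promote boundedness to compactness I take $(y_{n})$ bounded in $R(\A)$ and set $x_{n}:=\cA^{-1}y_{n}$; since $\norm{x_{n}}_{\H_{1}}\leq c_{\A}\norm{y_{n}}_{\H_{2}}$ and $\A x_{n}=y_{n}$, $(x_{n})$ is graph-bounded in $D(\cA)$, hence has a subsequence convergent in $\H_{1}$ by (i). Closedness of $R(\As)$ places the limit in $R(\As)$, yielding (ii). Conversely, (ii) $\Rightarrow$ (i) is a one-liner: any graph-bounded $(x_{n})\subset D(\cA)$ has $(\A x_{n})$ bounded in $R(\A)$, and compactness of $\cA^{-1}$ extracts a subsequence of $\cA^{-1}\A x_{n}=x_{n}$ convergent in $\H_{1}$.

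For the remaining equivalences I would invoke Schauder's theorem together with the identification $(\cA^{-1})^{*}=(\cAs)^{-1}$ as bounded operators between the closed range spaces; the latter drops out of $\scp{\cA x}{w}_{\H_{2}}=\scp{x}{\cAs w}_{\H_{1}}$ upon substituting $x=\cA^{-1}y$ and $w=(\cAs)^{-1}z$. This gives (ii) $\Leftrightarrow$ (ii*), and applying the (i) $\Rightarrow$ (ii) step to the dual pair $(\cAs,\cA)$ supplies (i*) $\Leftrightarrow$ (ii*). The norm identities $\bnorm{\cA^{-1}}_{R(\A),R(\As)}=\bnorm{(\cAs)^{-1}}_{R(\As),R(\A)}=c_{\A}$ then fall out of the Rayleigh quotient characterization in Remark \ref{remconstants}. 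The main obstacle is more bookkeeping than conceptual: one must carefully track which norm sits on which space at each step (graph norm on $D(\cA)$, $\H_{2}$-norm on $R(\A)$, $\H_{1}$-norm on $R(\As)$) and invoke the adjoint identity only \emph{after} closed ranges have turned the relevant codomains into genuine Hilbert subspaces on which Schauder's theorem applies.
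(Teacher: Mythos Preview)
Your proposal is correct and covers all the claimed equivalences and consequences. The route, however, differs from the paper's in one notable spot: for the passage from (i) to the starred side, the paper does \emph{not} invoke Schauder's theorem. Instead it proves (i) $\Rightarrow$ (i${}^{*}$) directly: given $(y_{n})\subset D(\cAs)$ bounded, it writes $y_{n}=\A x_{n}$ with $(x_{n})\subset D(\cA)$ bounded (using the already-established Poincar\'e estimate and closedness of $R(\A)$), extracts a subsequence with $x_{n}\to x$ in $\H_{1}$ via (i), and then shows $(y_{n})$ is Cauchy in $\H_{2}$ through the inner-product trick
\[
\norm{y_{n}-y_{m}}_{\H_{2}}^{2}
=\scp{\As(y_{n}-y_{m})}{x_{n}-x_{m}}_{\H_{1}}
\leq c\,\norm{x_{n}-x_{m}}_{\H_{1}}.
\]
The remaining equivalences are declared ``immediately clear by symmetry''. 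Your Schauder argument is perfectly valid and perhaps more systematic, since it separates the compactness transfer $(\cA^{-1})^{*}=(\cAs)^{-1}$ cleanly from the rest; the paper's argument is a shade more self-contained, needing no external compactness-of-adjoint result. Either way the bookkeeping you flag (graph norms versus ambient norms, and establishing closed ranges \emph{before} treating $R(\A)$, $R(\As)$ as Hilbert spaces) is exactly the point to watch.
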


\begin{proof}
As the other assertions are easily proved
or immediately clear by symmetry,
we just show that (i), i.e., the compactness of
$$D(\cA)=D(\A)\cap\ovl{R(\As)}\cptemb\hilo,$$ 
implies (i${}^{*}$) as well as Lemma \ref{poincarerange} (i).

(i)$\impl$Lemma \ref{poincarerange} (i):
For this we use a standard indirect argument.
If Lemma \ref{poincarerange} (i) were wrong,
there would exist a sequence $(x_{n})\subset D(\cA)$ with $\norm{x_{n}}_{\H_{1}}=1$
and $\A x_{n}\to0$. As $(x_{n})$ is bounded in $D(\cA)$
we can extract a subsequence, again denoted by $(x_{n})$,
with $x_{n}\to x\in\H_{1}$ in $\H_{1}$. Since $\cA$ is closed,
we have $x\in D(\cA)$ and $\A x=0$, hence $x\in N(\cA)=\{0\}$, 
in contradiction to $1=\norm{x_{n}}_{\H_{1}}\to\norm{x}_{\H_{1}}=0$.

(i)$\impl$(i${}^{*}$):
Let $(y_{n})\subset D(\cAs)$ be a bounded sequence.
Utilizing Lemma \ref{poincarerange} (i) and (ii) we obtain
$D(\cAs)=D(\As)\cap R(\cA)$ and thus $y_{n}=\A x_{n}$ 
with $(x_{n})\subset D(\cA)$, which is bounded in $D(\cA)$ 
by Lemma \ref{poincarerange} (i). Hence we may extract a subsequence,
again denoted by $(x_{n})$, converging in $\H_{1}$. 
Therefore with $x_{n,m}:=x_{n}-x_{m}$ and $y_{n,m}:=y_{n}-y_{m}$ we see
\begin{align*}
\norm{y_{n,m}}_{\H_{2}}^2
&=\bscp{y_{n,m}}{\A(x_{n,m})}_{\H_{2}}
=\bscp{\As(y_{n,m})}{x_{n,m}}_{\H_{1}}
\leq c\,\norm{x_{n,m}}_{\H_{1}},
\end{align*}
and hence $(y_{n})$ is a Cauchy sequence in $\H_{2}$.
\end{proof}

Now, let $\Az\!:\!D(\Az)\subset\H_{0}\to\H_{1}$ and $\Ao\!:\!D(\Ao)\subset\H_{1}\to\H_{2}$ 
be (possibly unbounded) closed and densely defined linear operators 
on three Hilbert spaces $\H_{0}$, $\H_{1}$, and $\H_{2}$
with adjoints $\Azs\!:\!D(\Azs)\subset\H_{1}\to\H_{0}$
and $\Aos\!:\!D(\Aos)\subset\H_{2}\to\H_{1}$
as well as reduced operators $\cAz$, $\cAzs$, and $\cAo$, $\cAos$.
Furthermore, we assume the sequence or complex property of $\Az$ and $\Ao$, 
that is, $\Ao\Az=0$, i.e.,
\begin{align}
\mylabel{sequenceprop}
R(\Az)\subset N(\Ao).
\end{align}
Then also $\Azs\Aos=0$, i.e., $R(\Aos)\subset N(\Azs)$.
From the Helmholtz type decompositions \eqref{helm} for $\A=\Az$ and $\A=\Ao$ we get in particular
\begin{align}
\label{helmappclone}
\H_{1}&=\ovl{R(\Az)}\oplus_{\H_{1}}N(\Azs),
&
\H_{1}&=\ovl{R(\Aos)}\oplus_{\H_{1}}N(\Ao),
\end{align}
and the following result for Helmholtz type decompositions:

\begin{lem}
\label{helmrefined}
Let $N_{0,1}:=N(\Ao)\cap N(\Azs)$.
The refined Helmholtz type decompositions
\begin{align}
\mylabel{helmrefinedzero}
N(\Ao)&=\ovl{R(\Az)}\oplus_{\H_{1}}N_{0,1},
&
D(\Ao)&=\ovl{R(\Az)}\oplus_{\H_{1}}\big(D(\Ao)\cap N(\Azs)\big),
&
R(\Az)&=R(\cAz),\\
\mylabel{helmrefinedone}
N(\Azs)&=\ovl{R(\Aos)}\oplus_{\H_{1}}N_{0,1},
&
D(\Azs)&=\ovl{R(\Aos)}\oplus_{\H_{1}}\big(D(\Azs)\cap N(\Ao)\big),
&
R(\Aos)&=R(\cAos),
\end{align}
and
\begin{align}
\mylabel{helmrefinedtwo}
\H_{1}&=\ovl{R(\Az)}\oplus_{\H_{1}}N_{0,1}\oplus_{\H_{1}}\ovl{R(\Aos)}
\intertext{hold, which can be further refined and specialized, e.g., to}
\begin{split}
\mylabel{helmrefinedthree}
D(\Ao)&=\ovl{R(\Az)}\oplus_{\H_{1}}N_{0,1}\oplus_{\H_{1}}D(\cAo),\\
D(\Azs)&=D(\cAzs)\oplus_{\H_{1}}N_{0,1}\oplus_{\H_{1}}\ovl{R(\Aos)},\\
D(\Ao)\cap D(\Azs)&=D(\cAzs)\oplus_{\H_{1}}N_{0,1}\oplus_{\H_{1}}D(\cAo).
\end{split}
\end{align}
\end{lem}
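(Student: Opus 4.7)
The plan is to bootstrap everything from the basic Helmholtz decompositions \eqref{helmappclone} using the complex property \eqref{sequenceprop} in a purely algebraic way. The crucial preliminary observation is that because $\Ao$ is closed, $N(\Ao)$ is closed in $\H_{1}$, so \eqref{sequenceprop} yields $\ovl{R(\Az)}\subset N(\Ao)\subset D(\Ao)$; dually, $\ovl{R(\Aos)}\subset N(\Azs)\subset D(\Azs)$. This containment is what makes all intersections with the orthogonal decompositions split cleanly.

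First I would prove the left column of \eqref{helmrefinedzero}: intersect $\H_{1}=\ovl{R(\Az)}\oplus_{\H_{1}}N(\Azs)$ from \eqref{helmappclone} with $N(\Ao)$. Since $\ovl{R(\Az)}\subset N(\Ao)$, every $x\in N(\Ao)$ has its Helmholtz $N(\Azs)$-part in $N(\Ao)\cap N(\Azs)=N_{0,1}$, giving $N(\Ao)=\ovl{R(\Az)}\oplus_{\H_{1}}N_{0,1}$. The middle column follows by intersecting the same decomposition with $D(\Ao)$: for $x\in D(\Ao)$ written as $u+v$ with $u\in\ovl{R(\Az)}\subset D(\Ao)$ and $v\in N(\Azs)$, one obtains $v=x-u\in D(\Ao)\cap N(\Azs)$. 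The right column $R(\Az)=R(\cAz)$ is just \eqref{RacA}. The symmetric statements \eqref{helmrefinedone} are proved identically, using $\H_{1}=\ovl{R(\Aos)}\oplus_{\H_{1}}N(\Ao)$ and intersecting with $N(\Azs)$ resp. $D(\Azs)$, together with $\ovl{R(\Aos)}\subset D(\Azs)$.

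For \eqref{helmrefinedtwo} I substitute \eqref{helmrefinedone} into \eqref{helmappclone}, obtaining $\H_{1}=\ovl{R(\Az)}\oplus_{\H_{1}}N(\Azs)=\ovl{R(\Az)}\oplus_{\H_{1}}N_{0,1}\oplus_{\H_{1}}\ovl{R(\Aos)}$; orthogonality of the three summands follows because $\ovl{R(\Az)}\perp N(\Azs)$ and inside $N(\Azs)$ we have the orthogonal splitting from \eqref{helmrefinedone}. For \eqref{helmrefinedthree} I plug the decomposition $N(\Azs)=\ovl{R(\Aos)}\oplus_{\H_{1}}N_{0,1}$ into the second identity of \eqref{helmrefinedzero}, using the identification $D(\cAo)=D(\Ao)\cap\ovl{R(\Aos)}$ and $N_{0,1}\subset N(\Ao)\subset D(\Ao)$ to get $D(\Ao)\cap N(\Azs)=D(\cAo)\oplus_{\H_{1}}N_{0,1}$. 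This yields the first line of \eqref{helmrefinedthree}; the second line is obtained symmetrically, and the third line follows by intersecting the first two decompositions and noting that $D(\cAo)\subset D(\Ao)\cap\ovl{R(\Aos)}\subset D(\Ao)\cap D(\Azs)$ and $D(\cAzs)\subset D(\Ao)\cap D(\Azs)$ while $N_{0,1}\subset D(\Ao)\cap D(\Azs)$.

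There is no real obstacle here: the proof is a formal consequence of the projection theorem together with the complex property, and the only subtlety is being careful that the intersections of orthogonal decompositions with closed subspaces split correctly, which is exactly guaranteed by the inclusions $\ovl{R(\Az)}\subset N(\Ao)$ and $\ovl{R(\Aos)}\subset N(\Azs)$ noted at the start.
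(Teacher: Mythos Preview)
Your proposal is correct and is precisely the argument the paper has in mind: the paper's own proof is the single sentence ``By \eqref{helmappclone} and the complex properties we see \eqref{helmrefinedzero} and \eqref{helmrefinedone}, yielding directly \eqref{helmrefinedtwo} and \eqref{helmrefinedthree},'' and what you have written is exactly a careful unpacking of that sentence. The key inclusions $\ovl{R(\Az)}\subset N(\Ao)\subset D(\Ao)$ and $\ovl{R(\Aos)}\subset N(\Azs)\subset D(\Azs)$ that you isolate are indeed the only nontrivial input beyond the projection theorem, and they are what make the intersections split as claimed.
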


\begin{proof}
By \eqref{helmappclone} and the complex properties we see \eqref{helmrefinedzero} and \eqref{helmrefinedone},
yielding directly \eqref{helmrefinedtwo} and \eqref{helmrefinedthree}.
\end{proof}

We observe
\begin{align*}
D(\cAo)
&=D(\Ao)\cap\ovl{R(\Aos)}
\subset D(\Ao)\cap N(\Azs)
\subset D(\Ao)\cap D(\Azs),\\
D(\cAzs)
&=D(\Azs)\cap\ovl{R(\Az)}
\subset D(\Azs)\cap N(\Ao)
\subset D(\Azs)\cap D(\Ao),
\end{align*}
and using the refined Helmholtz type decompositions of Lemma \ref{helmrefined}
as well as the results of Lemma \ref{poincarerange}, Lemma \ref{cptequi}, and Lemma \ref{compemblem},
we immediately see: 

\begin{lem}
\label{compemblem}
The following assertions are equivalent: 
\begin{itemize}
\item[\bf(i)]
$D(\cAz)\cptemb\H_{0}$, $D(\cAo)\cptemb\H_{1}$,
and $N_{0,1}\cptemb\H_{1}$ are compact.
\item[\bf(ii)]
$D(\Ao)\cap D(\Azs)\cptemb\H_{1}$ is compact.
\end{itemize}
In this case, the cohomology group $N_{0,1}$ has finite dimension.
\end{lem}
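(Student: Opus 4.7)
The plan is to exploit the orthogonal decomposition
\[
D(\Ao)\cap D(\Azs)=D(\cAzs)\oplus_{\H_{1}}N_{0,1}\oplus_{\H_{1}}D(\cAo)
\]
from \eqref{helmrefinedthree}, together with the duality between the compactness of $D(\cAz)\cptemb\H_{0}$ and $D(\cAzs)\cptemb\H_{1}$ supplied by Lemma \ref{cptequi}. The direction (i)$\Rightarrow$(ii) will follow by picking subsequences componentwise; the direction (ii)$\Rightarrow$(i) will follow because each of the three summands sits inside $D(\Ao)\cap D(\Azs)$ (as already noted just before the lemma), so inherits compactness as a subspace. The finite-dimensionality of $N_{0,1}$ will come for free from the compact self-embedding.

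For \textbf{(i)$\Rightarrow$(ii)}, first I would invoke Lemma \ref{cptequi} applied to $\A=\Az$ to upgrade $D(\cAz)\cptemb\H_{0}$ into $D(\cAzs)\cptemb\H_{1}$. Then, given any $D(\Ao)\cap D(\Azs)$-bounded sequence $(x_{n})$, I would use the decomposition from Lemma \ref{helmrefined} to write $x_{n}=u_{n}+h_{n}+v_{n}$ with $u_{n}\in D(\cAzs)$, $h_{n}\in N_{0,1}$, $v_{n}\in D(\cAo)$. By orthogonality the three sequences are bounded in the respective graph norms, and by the three compact embeddings (two from assumption (i), one just derived) I can extract a joint subsequence along which each component converges strongly in $\H_{1}$. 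Summing gives strong $\H_{1}$-convergence of $(x_{n})$.

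For \textbf{(ii)$\Rightarrow$(i)}, I would use the inclusions
\[
D(\cAo)\subset D(\Ao)\cap D(\Azs),\qquad D(\cAzs)\subset D(\Ao)\cap D(\Azs),\qquad N_{0,1}\subset D(\Ao)\cap D(\Azs),
\]
observing that on each of the three subspaces the relevant graph norm is equivalent to (or identical with, for $N_{0,1}$) the norm inherited from $D(\Ao)\cap D(\Azs)$. A bounded sequence in any of the three subspaces is therefore bounded in $D(\Ao)\cap D(\Azs)$ and by (ii) has an $\H_{1}$-convergent subsequence; moreover each of the three subspaces is closed in $\H_{1}$ (the first two by Lemma \ref{poincarerange} applied backwards after we have the Friedrichs/Poincar\'e estimates, but more directly because they are null spaces intersected with a closed orthogonal complement), so the strong limit lies in the right space. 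This yields $D(\cAo)\cptemb\H_{1}$ and $N_{0,1}\cptemb\H_{1}$ directly and $D(\cAzs)\cptemb\H_{1}$, which by Lemma \ref{cptequi} is equivalent to $D(\cAz)\cptemb\H_{0}$.

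For the last assertion, I note that $N_{0,1}$ is a closed subspace of $\H_{1}$ on which the graph norm of $D(\Ao)\cap D(\Azs)$ coincides with $\|\cdot\|_{\H_{1}}$ (since $\Ao$ and $\Azs$ vanish there). The compact embedding $N_{0,1}\cptemb\H_{1}$ therefore becomes a compact identity on $N_{0,1}$, forcing its closed unit ball to be compact. By the Riesz lemma, $\dim N_{0,1}<\infty$. The only subtle point in the whole argument is making sure the decomposition \eqref{helmrefinedthree} is really \emph{orthogonal in $\H_{1}$} so that boundedness of $x_{n}$ in $D(\Ao)\cap D(\Azs)$ passes to boundedness of each summand in its own graph norm; this is guaranteed by Lemma \ref{helmrefined}, and is the one bookkeeping step I would be most careful about.
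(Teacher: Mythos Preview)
Your argument is correct and follows essentially the same approach the paper indicates: the paper does not spell out a proof but points to the inclusions $D(\cAo),D(\cAzs)\subset D(\Ao)\cap D(\Azs)$, the refined Helmholtz decomposition of Lemma~\ref{helmrefined}, and Lemma~\ref{cptequi}, which are exactly the ingredients you use. One minor remark: in the direction (ii)$\Rightarrow$(i) your comment that ``the strong limit lies in the right space'' is unnecessary, since compactness of an embedding $X\cptemb\H_{1}$ only requires that $X$-bounded sequences admit $\H_{1}$-convergent subsequences, not that the limit stays in $X$; the argument is already complete without it.
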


We summarize:

\begin{theo}
\label{compembtheo}
Let $D(\Ao)\cap D(\Azs)\cptemb\H_{1}$ be compact.
Then $D(\cAz)\cptemb\H_{0}$, $D(\cAo)\cptemb\H_{1}$,
as well as $D(\cAzs)\cptemb\H_{1}$, $D(\cAos)\cptemb\H_{2}$ are compact,
$\dim N_{0,1}<\infty$,
all ranges $R(\Az)$, $R(\Azs)$, and $R(\Ao)$, $R(\Aos)$ are closed, and
the corresponding Friedrichs/Poincar\'e type estimates hold, i.e.
there exists positive constants $c_{\Az},c_{\Ao}$ such that
\begin{align}
\mylabel{poincareAz}
\forall\,z&\in D(\cAz)
&
\norm{z}_{\H_{0}}&\leq c_{\Az}\norm{\Az z}_{\H_{1}},\\
\nonumber
\forall\,x&\in D(\cAzs)
&
\norm{x}_{\H_{1}}&\leq c_{\Az}\norm{\Azs x}_{\H_{0}},\\
\nonumber
\forall\,x&\in D(\cAo)
&
\norm{x}_{\H_{1}}&\leq c_{\Ao}\norm{\Ao x}_{\H_{2}},\\
\nonumber
\forall\,y&\in D(\cAos)
&
\norm{y}_{\H_{2}}&\leq c_{\Ao}\norm{\Aos y}_{\H_{1}}.
\end{align}
Moreover, all refined Helmholtz type decompositions of Lemma \ref{helmrefined} 
hold with closed ranges, especially
\begin{align}
\mylabel{helmrefinedimportant}
D(\Ao)&=R(\cAz)\oplus_{\H_{1}}\big(D(\Ao)\cap N(\Azs)\big).
\end{align}
\end{theo}

\begin{proof}
Apply the latter lemmas and remarks to $\A=\Az$ and $\A=\Ao$.
\end{proof}

\subsection{The $\Azs$-$\Ao$-Lemma}

Let $\Az$ and $\Ao$ be as introduced before satisfying
the complex property \eqref{sequenceprop}, i.e., $\Ao\Az=0$ or $R(\Az)\subset N(\Ao)$. 
In other words, the primal and dual sequences
\begin{align}
\begin{split}
\mylabel{complexdiag}
\begin{CD}
D(\Az)\subset\H_{0} @> \Az >>
D(\Ao)\subset\H_{1} @> \Ao >>
\H_{2},
\end{CD}\\
\begin{CD}
\H_{0} @< \Azs <<
D(\Azs)\subset\H_{1} @< \Aos <<
D(\Aos)\subset\H_{2}
\end{CD}
\end{split}
\end{align}
are Hilbert complexes of closed and densely defined linear operators.
The additional assumption that the ranges $R(\Az)$ and $R(\Ao)$ are closed
\big(and then also the ranges $R(\Azs)$ and $R(\Aos)$\big)
is equivalent to the closedness of the Hilbert complexes.
Moreover, the complexes are exact if and only if $N_{0,1}=\{0\}$.

As our main result, the following generalized global $\div$-$\curl$-lemma holds.

\begin{theo}[$\Azs$-$\Ao$-lemma]
\mylabel{gen-div-rot-lem}
Let $D(\Ao)\cap D(\Azs)\cptemb\H_{1}$ be compact.
Moreover, let $(x_{n})\subset D(\Ao)$ and $(y_{n})\subset D(\Azs)$ 
be two sequences bounded in $D(\Ao)$ and $D(\Azs)$, respectively.
Then there exist $x\in D(\Ao)$ and $y\in D(\Azs)$ as well as subsequences, again denoted by $(x_{n})$ and $(y_{n})$, 
such that
\begin{itemize}
\item
$x_{n}\wto x$ in $D(\Ao)$,
\item
$y_{n}\wto y$ in $D(\Azs)$,
\item
$\scp{x_{n}}{y_{n}}_{\H_{1}}\to\scp{x}{y}_{\H_{1}}$.
\end{itemize}
\end{theo}

\begin{proof}
Note that Theorem \ref{compembtheo} can be applied.
We pick subsequences, again denoted by $(x_{n})$ and $(y_{n})$, 
such that $(x_{n})$ and $(y_{n})$ converge weakly in $D(\Ao)$ and $D(\Azs)$ 
to $x\in D(\Ao)$ and $y\in D(\Azs)$, respectively. 
By \eqref{helmrefinedimportant} we get the orthogonal decomposition 
$$D(\Ao)\ni x_{n}=\Az z_{n}+\tilde{x}_{n},\quad
z_{n}\in D(\cAz),\quad
\tilde{x}_{n}\in D(\Ao)\cap N(\Azs).$$ 
$(z_{n})$ is bounded in $D(\cAz)$ by orthogonality 
and the Friedrichs/Poincar\'e type estimate \eqref{poincareAz}.
$(\tilde{x}_{n})$ is bounded in $D(\Ao)\cap D(\Azs)$ by orthogonality 
and $\Ao\tilde{x}_{n}=\Ao x_{n}$, $\Azs\tilde{x}_{n}=0$.
Using the compact embeddings $D(\cAz)\cptemb\H_{0}$ and 
$D(\Ao)\cap D(\Azs)\cptemb\H_{1}$,
there exist $z\in D(\cAz)$ and $\tilde{x}\in D(\Ao)\cap N(\Azs)$
and we can extract two subsequences, again denoted by $(z_{n})$ and $(\tilde{x}_{n})$, 
such that $z_{n}\rightharpoonup z$ in $D(\Az)$ and $z_{n}\to z$ in $\H_{0}$
as well as $\tilde{x}_{n}\rightharpoonup\tilde{x}$ in $D(\Ao)\cap D(\Azs)$ 
and $\tilde{x}_{n}\to\tilde{x}$ in $\H_{1}$. 
We have $x=\Az z+\tilde{x}$, giving the Helmholtz type decomposition for $x$, 
as, e.g., for all $\varphi\in\H_{1}$
\begin{align*}
\scp{x}{\varphi}_{\H_{1}}
&\ot\scp{x_{n}}{\varphi}_{\H_{1}}
=\scp{\Az z_{n}}{\varphi}_{\H_{1}}
+\scp{\tilde{x}_{n}}{\varphi}_{\H_{1}}
\to\scp{\Az z}{\varphi}_{\H_{1}}
+\scp{\tilde{x}}{\varphi}_{\H_{1}}.
\end{align*}
Finally, we see
\begin{align*}
\scp{x_{n}}{y_{n}}_{\H_{1}}
&=\scp{\Az z_{n}}{y_{n}}_{\H_{1}}
+\scp{\tilde{x}_{n}}{y_{n}}_{\H_{1}}
=\scp{z_{n}}{\Azs y_{n}}_{\H_{0}}
+\scp{\tilde{x}_{n}}{y_{n}}_{\H_{1}}\\
&\to\scp{z}{\Azs y}_{\H_{0}}
+\scp{\tilde{x}}{y}_{\H_{1}}
=\scp{\Az z}{y}_{\H_{1}}
+\scp{\tilde{x}}{y}_{\H_{1}}
=\scp{x}{y}_{\H_{1}},
\end{align*}
completing the proof.
\end{proof}

\subsection{Generalizations of the $\Azs$-$\Ao$-Lemma}
\mylabel{sec-gen-divcurl-lem}

In this section we present and discuss 
some variants of Theorem \ref{gen-div-rot-lem} using weaker assumptions,
which are taken from the nice paper \cite{waurick2018a} of Marcus Waurick.
We start with the following remarks.

\begin{rem}
\mylabel{gen-div-rot-lem-rem-1}
By Lemma \ref{compemblem} the crucial assumption, i.e., $D(\Ao)\cap D(\Azs)\cptemb\H_{1}$ is compact,
holds, if and only if $D(\cAz)\cptemb\H_{0}$, $D(\cAo)\cptemb\H_{1}$ are compact
and $N_{0,1}$ is finite-dimensional. Moreover, as Banach space adjoints we have
\begin{align*}
\H_{0}'&\cptemb D(\cAz)'
&
&\equi
&
D(\cAz)&\cptemb\H_{0}
&
&\equi
&
D(\cAzs)&\cptemb\H_{1}
&
&\equi
&
\H_{1}'&\cptemb D(\cAzs)',
\intertext{and}
\H_{1}'&\cptemb D(\cAo)'
&
&\equi
&
D(\cAo)&\cptemb\H_{1}
&
&\equi
&
D(\cAos)&\cptemb\H_{2}
&
&\equi
&
\H_{2}'&\cptemb D(\cAos)'.
\end{align*}
In particular, the assumption on the compactness of $D(\Ao)\cap D(\Azs)\cptemb\H_{1}$
is equivalent to the assumptions that $\dim N_{0,1}<\infty$ and 
$\H_{0}\cong\H_{0}'\cptemb D(\cAz)'$, $\H_{2}\cong\H_{2}'\cptemb D(\cAos)'$ are compact.
Thus we observe that the assumptions of Theorem \ref{gen-div-rot-lem}
are stronger but closely related to those of \cite[Theorem 2.4]{waurick2018a}.
Recall that by Theorem \ref{compembtheo} both ranges 
$R(\Az)$ and $R(\Ao)$ are closed and that $\dim N_{0,1}<\infty$
if $D(\Ao)\cap D(\Azs)\cptemb\H_{1}$ is compact.
We emphasize that we have provided a different proof
under stronger assumptions, which is from our personal point of view and taste
easier and more canonical.
\end{rem}

Let us discuss the relations to \cite{waurick2018a},
in particular to \cite[Theorem 2.4]{waurick2018a}, in more detail.
First we note that Theorem \ref{gen-div-rot-lem}
is equivalent to \cite[Theorem 2.5]{waurick2018a}
and that the assumptions of Theorem \ref{gen-div-rot-lem}
are stronger but closely related to those of \cite[Theorem 2.4]{waurick2018a}.

A closer inspection of the proof of Theorem \ref{gen-div-rot-lem} shows that
we can deal with slightly weaker assumptions.
For this, let $R(\Az)$ and $R(\Ao)$ be closed 
(which automatically would be implied by the compact embedding $D(\Ao)\cap D(\Azs)\cptemb\H_{1}$,
see Theorem \ref{compembtheo}),
and let $(x_{n})\subset D(\Ao)$ and $(y_{n})\subset D(\Azs)$ 
be two sequences bounded in $\H_{1}$. 
By \eqref{helmrefinedthree} we have
\begin{align}
\label{HelmdecoweakA}
\begin{split}
D(\Ao)\ni x_{n}=\Az z_{n}+\hat{x}_{n}+\Aos w_{n}&\in R(\cAz)\oplus_{\H_{1}}N_{0,1}\oplus_{\H_{1}}D(\cAo),\\
D(\Azs)\ni y_{n}=\Az u_{n}+\hat{y}_{n}+\Aos v_{n}&\in D(\cAzs)\oplus_{\H_{1}}N_{0,1}\oplus_{\H_{1}}R(\cAos),
\end{split}
\end{align}
with $(z_{n})$ and $(v_{n})$ bounded in $D(\cAz)$ and $D(\cAos)$
by Lemma \ref{poincarerange}, respectively.
W.l.o.g. we can assume that
$(z_{n})$ and $(v_{n})$ already converge weakly in $D(\cAz)$ and $D(\cAos)$, respectively.
Orthogonality shows 
\begin{align}
\label{calcweakAstrong}
\begin{split}
\scp{x_{n}}{y_{n}}_{\H_{1}}
&=\scp{\Az z_{n}}{y_{n}}_{\H_{1}}
+\scp{\hat{x}_{n}}{\hat{y}_{n}}_{\H_{1}}
+\scp{x_{n}}{\Aos v_{n}}_{\H_{1}}\\
&=\scp{z_{n}}{\Azs y_{n}}_{\H_{0}}
+\scp{\hat{x}_{n}}{\hat{y}_{n}}_{\H_{1}}
+\scp{\Ao x_{n}}{v_{n}}_{\H_{2}}.
\end{split}
\end{align}
Hence, we observe that after extracting subsequences,
$\big(\scp{x_{n}}{y_{n}}_{\H_{1}}\big)$ converges, provided that
$N_{0,1}$ is finite-dimensional and 
$(\Azs y_{n})$ and $(\Ao x_{n})$ are relatively compact in
$D(\cAz)'$ and $D(\cAos)'$, respectively. 
This is almost the statement of \cite[Theorem 2.4]{waurick2018a},
still with stronger assumptions.

\subsubsection{More Generalizations}
\label{sec:moregen}

The latter idea can be generalized and, indeed, 
in \cite[Theorem 2.4]{waurick2018a} a more general situation is considered
as $(x_{n})\subset D(\Ao)$ and $(y_{n})\subset D(\Azs)$ are not assumed to hold. 
In fact, these conditions are replaced by corresponding canonical distributional versions
making the respective operators continuous on certain natural dual spaces.
For this we need some preliminaries and new notations.

Dual pairs $(\A,\As)$, $(\cA,\cAs)$
of densely defined and closed (unbounded) linear operators
(as discussed in the latter sections)
with domains of definitions $D(\A)$, $D(\cA)$ and $D(\As)$, $D(\cAs)$,
which are Hilbert spaces equipped with the respective graph norms,
and closed ranges $R(\A)=R(\cA)$ and $R(\As)=R(\cAs)$
can also be considered as \underline{bounded} linear operators.
More precisely,
\begin{align*}
\A:D(\A)&\to\H_{2},
&
\As:D(\As)&\to\H_{1},\\
\cA:D(\cA)&\to R(\cA)=R(\A),
&
\cAs:D(\cAs)&\to R(\cAs)=R(\As)
\intertext{are bounded with bounded Banach space adjoints}
\A':\H_{2}'&\to D(\A)',
&
(\As)':\H_{1}'&\to D(\As)',\\
\cA':R(\A)'&\to D(\cA)',
&
(\cAs)':R(\As)'&\to D(\cAs)',
\end{align*}
defined as usual by
\begin{align*}
\A'y'(\varphi)&:=y'(\A\varphi),
&
y'&\in\H_{2}',
&
\varphi&\in D(\A),\\
(\As)'x'(\phi)&:=x'(\As\phi),
&
x'&\in\H_{1}',
&
\phi&\in D(\As),\\
\cA'y'(\varphi)&:=y'(\cA\varphi),
&
y'&\in R(\A)',
&
\varphi&\in D(\cA),\\
(\cAs)'x'(\phi)&:=x'(\cAs\phi),
&
x'&\in R(\As)'
&
\phi&\in D(\cAs).
\end{align*}
Moreover, we introduce the standard Riesz isomorphisms
\begin{align*}
\R_{\H_{n}}:\H_{n}&\to\H_{n}',
&
\R_{R(\A)}:R(\A)&\to R(\A)',
&
\R_{R(\As)}:R(\As)&\to R(\As)'
\end{align*}
by $x\mapsto\scp{\,\cdot\,}{x}_{\H_{n}}$.
Note that the closed ranges are itself Hilbert spaces with the inner products of $\H_{n}$. 
Using the latter operators we define linear extensions of $\A$, $\cA$ and $\As$, $\cAs$ by
\begin{align*}
\widetilde{\A}:=(\As)'\R_{\H_{1}}:\H_{1}&\to D(\As)',
&
\widetilde{\As}:=\A'\R_{\H_{2}}:\H_{2}&\to D(\A)',\\
\widetilde{\cA}:=(\cAs)'\R_{R(\As)}:R(\As)&\to D(\cAs)',
&
\widetilde{\cAs}:=\cA'\R_{R(\A)}:R(\A)&\to D(\cA)',
\end{align*}
with actions given by
\begin{align*}
\widetilde{\A}\,x(\phi)&=(\As)'\R_{\H_{1}}x(\phi)=\R_{\H_{1}}x(\As\phi)=\scp{\As\phi}{x}_{\H_{1}},
&
x&\in\H_{1},
&
\phi&\in D(\As),\\
\widetilde{\cA}\,x(\phi)&=(\cAs)'\R_{R(\As)}x(\phi)=\R_{R(\As)}x(\cAs\phi)=\scp{\As\phi}{x}_{\H_{1}},
&
x&\in R(\As),
&
\phi&\in D(\cAs),\\
\widetilde{\As}\,y(\varphi)&=\A'\R_{\H_{2}}y(\varphi)=\R_{\H_{2}}y(\A\varphi)=\scp{\A\varphi}{y}_{\H_{2}},
&
y&\in\H_{2},
&
\varphi&\in D(\A),\\
\widetilde{\cAs}\,y(\varphi)&=\cA'\R_{R(\A)}y(\varphi)=\R_{R(\A)}y(\cA\varphi)=\scp{\A\varphi}{y}_{\H_{2}},
&
y&\in R(\A),
&
\varphi&\in D(\cA).
\end{align*}
Introducing the canonical embeddings and their adjoints
\begin{align*}
\iota_{D(\A)}:D(\A)&\hookrightarrow\H_{1},
&
\iota_{D(\A)}':\H_{1}'&\hookrightarrow D(\A)',\\
\iota_{D(\cA)}:D(\cA)&\hookrightarrow R(\As),
&
\iota_{D(\cA)}':R(\As)'&\hookrightarrow D(\cA)',\\
\iota_{D(\As)}:D(\As)&\hookrightarrow\H_{2},
&
\iota_{D(\As)}':\H_{2}'&\hookrightarrow D(\As)',\\
\iota_{D(\cAs)}:D(\cAs)&\hookrightarrow R(\A),
&
\iota_{D(\cAs)}':R(\A)'&\hookrightarrow D(\cAs)'
\end{align*}
we emphasize that for all  $x\in D(\A)$ and for all $\phi\in D(\As)$
$$\widetilde{\A}\,x(\phi)
=\scp{\As\phi}{x}_{\H_{1}}
=\scp{\phi}{\A x}_{\H_{2}}
=\scp{\iota_{D(\As)}\phi}{\A x}_{\H_{2}}
=\R_{\H_{2}}\A x(\iota_{D(\As)}\phi)
=\iota_{D(\As)}'\R_{\H_{2}}\A x(\phi)$$
holds and therefore 
$$\widetilde{\A}|_{D(\A)}:=\widetilde{\A}\iota_{D(\A)}=\iota_{D(\As)}'\R_{\H_{2}}\A:D(\A)\to D(\As)'.$$
Thus, in this sense, $\widetilde{\A}$ is indeed an extension of $\A$.
In the same way we see that
$$\widetilde{\cA}|_{D(\cA)}=\iota_{D(\cAs)}'\R_{R(\A)}\cA,\qquad
\widetilde{\As}|_{D(\As)}=\iota_{D(\A)}'\R_{\H_{1}}\As,\qquad
\widetilde{\cAs}|_{D(\cAs)}=\iota_{D(\cA)}'\R_{R(\As)}\cAs$$
are extensions as well.

\begin{lem}[{\cite[Theorem 2.2]{waurick2018a}}]
\mylabel{adjointlemmaAAs}
Let $R(\A)$ be closed.
Then 
\begin{itemize}
\item[\bf(i)]
$\cA$, $(\cAs)'$, $\widetilde{\cA}$ are topological isomorphisms,
\item[\bf(i${}^{*}$)]
$\cAs$, $\cA'$, $\widetilde{\cAs}$ are topological isomorphisms,
\item[\bf(ii)]
$N(\widetilde{\A})=N(\A)$,
\item[\bf(ii${}^{*}$)]
$N(\widetilde{\As})=N(\As)$,
\item[\bf(iii)]
$\A'$, $\widetilde{\As}$ are surjective if and only if $N(\A)=0$,
\item[\bf(iii${}^{*}$)]
$(\As)'$, $\widetilde{\A}$ are surjective if and only if $N(\As)=0$.
\end{itemize}
\end{lem}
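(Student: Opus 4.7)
The plan is to split the six claims into three symmetric pairs and address them in the order (ii)/(ii${}^{*}$), (i)/(i${}^{*}$), (iii)/(iii${}^{*}$), since each pair uses progressively more of the hypothesis that $R(\A)$ is closed. For (ii) and (ii${}^{*}$) no closedness is needed: directly from $\widetilde{\A}\,x(\phi) = \scp{\As\phi}{x}_{\H_{1}}$, membership $x \in N(\widetilde{\A})$ is equivalent to $x \in R(\As)^{\bot_{\H_{1}}}$, which by the standard Hilbert-adjoint relation $R(\As)^{\bot_{\H_{1}}} = N(\A)$ (valid because $(\A,\As)$ is a dual pair) is exactly $N(\A)$. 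The symmetric argument yields (ii${}^{*}$).

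For the isomorphism claims (i) and (i${}^{*}$), I would first invoke Lemma \ref{poincarerange}: since $R(\A)$ is closed, so is $R(\As)$, and the reduced operators $\cA\colon D(\cA) \to R(\A)$ and $\cAs\colon D(\cAs) \to R(\As)$ are bijective bounded operators between Hilbert spaces with continuous inverses, hence topological isomorphisms. Taking Banach-space adjoints preserves this, so $\cA'\colon R(\A)' \to D(\cA)'$ and $(\cAs)'\colon R(\As)' \to D(\cAs)'$ are topological isomorphisms as well. Finally, $\widetilde{\cA} = (\cAs)'\R_{R(\As)}$ and $\widetilde{\cAs} = \cA'\R_{R(\A)}$ are compositions with the Riesz maps (isometric isomorphisms of the closed Hilbert subspaces $R(\As)$ and $R(\A)$ onto their duals), hence themselves topological isomorphisms, giving (i) and (i${}^{*}$).

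For (iii) and (iii${}^{*}$), the essential tool is the Banach closed-range theorem. Viewing $\A$ as a bounded operator from the Hilbert space $(D(\A), \norm{\cdot}_{D(\A)})$ into $\H_{2}$, closedness of $R(\A)$ yields $R(\A') = N(\A)^{\circ}$, the annihilator in $D(\A)'$. Thus $\A'$ is surjective if and only if $N(\A)^{\circ} = D(\A)'$, which by Hahn--Banach is equivalent to $N(\A) = \{0\}$. Since $\widetilde{\As} = \A'\R_{\H_{2}}$ and $\R_{\H_{2}}\colon \H_{2} \to \H_{2}'$ is an isometric isomorphism, surjectivity of $\widetilde{\As}$ is equivalent to that of $\A'$, which establishes (iii); the symmetric argument for $\As$ gives (iii${}^{*}$).

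The only real subtlety I anticipate is the consistent bookkeeping of dualities in the unbounded-operator setting: one must regard $\A$ as a bounded operator between the Hilbert spaces $D(\A)$ (equipped with the graph norm) and $\H_{2}$ in order to apply the closed-range theorem cleanly, and the duals $D(\A)'$ and $D(\cA)'$ must always refer to duals with respect to the graph norm, not to any identification with $\H_{1}$ via Riesz, which would be incorrect since $D(\A)$ is typically a strict dense subspace of $\H_{1}$. Once this is kept straight, all six claims reduce to standard Hilbert--Banach duality together with Lemma \ref{poincarerange} and the closed-range theorem.
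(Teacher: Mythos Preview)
Your proof is correct. It differs from the paper's main proof but coincides almost exactly with the alternative the paper itself records immediately afterwards in Remark~\ref{adjointlemmaAAsrem}.

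Concretely: for (i)/(i${}^{*}$) the paper's main argument verifies injectivity of $\cA'$ by hand and then constructs a preimage for an arbitrary $f\in D(\cA)'$ via the Riesz representation theorem applied to the inner product $\scp{\A\,\cdot\,}{\A\,\cdot\,}_{\H_{2}}$ on $D(\cA)$; you instead observe that the Banach adjoint of a topological isomorphism is again one, which is cleaner. For (iii)/(iii${}^{*}$) the paper's main proof treats the two implications separately, reducing the ``only if'' direction to the surjectivity of $\widetilde{\cAs}$ already established, and proving the ``if'' direction by an explicit computation showing $\norm{\varphi}_{\H_{1}}^{2}=0$ for $\varphi\in N(\A)$; you invoke the closed range theorem to get $R(\A')=N(\A)^{\circ}$ directly, which handles both directions at once. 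The paper's hands-on route has the advantage of being self-contained (no closed range theorem needed beyond what is already in Lemma~\ref{poincarerange}), whereas your annihilator approach is shorter and more structural---precisely the trade-off the paper points out in the remark.
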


\begin{proof}
$\cA$ and $\cAs$ are a topological isomorphisms by the bounded inverse theorem
or the considerations from the previous sections. 
If $\cA'y'=0$ for $y'\in R(\A)'$, then $\cA'y'(z)=y'(\cA z)=0$
for all $z\in D(\cA)$. Hence $y'=0$ on $R(\cA)=R(\A)$, i.e., $y'=0$.
Thus $\cA'$ is injective and so is $\widetilde{\cAs}=\cA'\R_{R(\A)}$
as $\R_{R(\A)}$ is an isomorphism. 
For $f\in D(\cA)'$ we obtain by Riesz' representation theorem
a unique $z\in D(\cA)$ such that
$$\forall\,\varphi\in D(\cA)\qquad
\scp{\A\varphi}{\A z}_{\H_{2}}
=f(\varphi).$$
Note that $\scp{\A\,\cdot\,}{\A\,\cdot\,}_{\H_{2}}$
is an inner product for $D(\cA)$ by Lemma \ref{poincarerange}.
Thus with $y:=\A z\in R(\A)$ we see
$$\forall\,\varphi\in D(\cA)\qquad
f(\varphi)
=\scp{\A\varphi}{y}_{\H_{2}}
=\widetilde{\cAs}\,y(\varphi),$$
i.e., $f=\widetilde{\cAs}\,y$. Hence $\widetilde{\cAs}$ is surjective 
and so is $\cA'=\widetilde{\cAs}\R_{R(\A)}^{-1}$ as $\R_{R(\A)}$ is an isomorphism. 
By the bounded inverse theorem both $\cA'$ and $\widetilde{\cAs}$
are topological isomorphisms.
Analogously we show the assertions for
$(\cAs)'$ and $\widetilde{\cA}$, which shows (i) and (i${}^{*}$).
For (ii) we observe $x\in N(\widetilde{\A})$ if and only if 
$$\forall\,\phi\in D(\As)\qquad
0=\widetilde{\A}\,x(\phi)
=\scp{\As\phi}{x}_{\H_{1}},$$
if and only if $x\in N(\A)$. Similarly we see
$N(\widetilde{\As})=N(\As)$, proving (ii${}^{*}$).
Let $N(\A)=\{0\}$ and $f\in D(\A)'$.
Then $D(\A)=D(\cA)$ and following the argument for $\widetilde{\cAs}$ from above
we obtain $y\in R(\A)\subset\H_{2}$ with $f=\widetilde{\As}\,y$. 
Hence $\widetilde{\As}$ is surjective 
and so is $\A'=\widetilde{\As}\R_{\H_{2}}^{-1}$ as $\R_{\H_{2}}$ is an isomorphism. 
On the other hand, $\widetilde{\As}$ is surjective if and only if $\A'$ is surjective, 
and in this case for any $\varphi\in N(\A)$
we can represent $f:=\iota_{D(\A)}'\R_{\H_{1}}\iota_{N(\A)}\varphi\in D(\A)'$ by
$\widetilde{\As}\,y=f$ with some $y\in\H_{2}$. Hence
\begin{align*}
0=\scp{\A\varphi}{y}_{\H_{2}}
=\widetilde{\As}\,y(\varphi)
=f(\varphi)
=\R_{\H_{1}}\iota_{N(\A)}\varphi(\iota_{D(\A)}\varphi)
=\scp{\iota_{D(\A)}\varphi}{\iota_{N(\A)}\varphi}_{\H_{1}}
=\scp{\varphi}{\varphi}_{\H_{1}},
\end{align*}
showing $N(\A)=\{0\}$, i.e., (iii).
Analogously, we show (iii${}^{*}$) for $(\As)'$ and $\widetilde{\A}$,
completing the proof.
\end{proof}

\begin{rem}
\mylabel{adjointlemmaAAsrem}
Another, even shorter proof using annihilators is possible. It holds
$$N(\cA')=R(\cA)^{\circ}=\{0\},\qquad
R(\cA')=N(\cA)^{\circ}=\{0\}^{\circ}=D(\cA)',$$
the latter by the closed range theorem.
Hence $\cA'$ is a topological isomorphism by the bounded inverse theorem.
The same applies to $(\cAs)'$.
The Riesz mappings are topological isomorphisms,
so are $\widetilde{\cA}$, $\widetilde{\cAs}$. Moreover,
\begin{align*}
R\big(\widetilde{\As}\big)=R(\A')&=N(\A)^{\circ},
&
R(\widetilde{\A})=R\big((\As)'\big)&=N(\As)^{\circ}.
\end{align*}
Note that also $N(\A')=R(\A)^{\circ}$ and $N\big((\As)'\big)=R(\As)^{\circ}$ hold.
\end{rem}

Using Hilbert space adjoints we introduce the canonical embeddings and projections 
\begin{align*}
\iota_{R(\A)}:R(\A)&\to\H_{2},
&
\iota_{R(\A)}^{*}:\H_{2}&\to R(\A),
&
\pi_{R(\A)}:=\iota_{R(\A)}\iota_{R(\A)}^{*}:\H_{2}&\to\H_{2},\\
\iota_{R(\As)}:R(\As)&\to\H_{1},
&
\iota_{R(\As)}^{*}:\H_{1}&\to R(\As),
&
\pi_{R(\As)}:=\iota_{R(\As)}\iota_{R(\As)}^{*}:\H_{1}&\to\H_{1}.
\end{align*}

\begin{rem}
\mylabel{projrem}
Indeed, $\pi_{R(\A)}$ and $\pi_{R(\As)}$ are the corresponding projections.
To see this, let us consider, e.g., $\pi_{R(\A)}$.
For $x\in D(\iota_{R(\A)}^{*})=\H_{2}$ with $\iota_{R(\A)}^{*}x\in R(\A)$ 
and all $\phi\in D(\iota_{R(\A)})=R(\A)$ it holds
$$\scp{\phi}{x}_{\H_{2}}
=\scp{\iota_{R(\A)}\phi}{x}_{\H_{2}}
=\scp{\phi}{\iota_{R(\A)}^{*}x}_{R(\A)}
=\scp{\phi}{\pi_{R(\A)}x}_{\H_{2}}.$$
Hence $\pi_{R(\A)}x\in R(\A)$ and $(1-\pi_{R(\A)})x\in R(\A)^{\bot_{\H_{2}}}$.
Moreover, since $\pi_{R(\A)}x\in D(\iota_{R(\A)}^{*})=\H_{2}$ the latter computation shows
for all $\phi\in R(\A)$
$$\scp{\phi}{x}_{\H_{2}}
=\scp{\phi}{\pi_{R(\A)}x}_{\H_{2}}
=\scp{\phi}{\pi_{R(\A)}\pi_{R(\A)}x}_{\H_{2}},$$
i.e., $\pi_{R(\A)}\pi_{R(\A)}x=\pi_{R(\A)}x$ on $R(\A)$.
Finally, $\pi_{R(\A)}$ is self-adjoint.
\end{rem}

Furthermore, we need
\begin{align*}
\iota_{R(\A)}^{*}\iota_{D(\As)}:D(\As)&\to D(\cAs),
&
(\iota_{R(\A)}^{*}\iota_{D(\As)})':D(\cAs)'&\to D(\cA)',\\
\iota_{R(\As)}^{*}\iota_{D(\A)}:D(\A)&\to D(\cA),
&
(\iota_{R(\As)}^{*}\iota_{D(\A)})':D(\cA)'&\to D(\A)'.
\end{align*}
We also emphasize that for $x\in\H_{1}$ it holds 
$(1-\pi_{R(\As)})x\in R(\As)^{\bot_{\H_{1}}}=N(\A)$ and thus
$$x=\pi_{R(\As)}x+(1-\pi_{R(\As)})x\in R(\As)\oplus_{\H_{1}}N(\A)$$
is the Helmholtz decomposition for $x$. 
Analogously for $y\in\H_{2}$ the Helmholtz decomposition is given by
$$y=\pi_{R(\A)}y+(1-\pi_{R(\A)})y\in R(\A)\oplus_{\H_{2}}N(\As).$$
Hence for $x\in D(\A)$ and $y\in D(\As)$ we identify
\begin{align}
\mylabel{piRADA}
\pi_{R(\As)}x
&=\iota_{R(\As)}^{*}\iota_{D(\A)}x\in D(\cA),
&
\pi_{R(\A)}y
&=\iota_{R(\A)}^{*}\iota_{D(\As)}y\in D(\cAs).
\end{align}

\begin{lem}
\mylabel{adjointlemmaAAspi}
Let $R(\A)$ be closed. Then 
\begin{itemize}
\item[\bf(i)]
$\widetilde{\A}=(\iota_{R(\A)}^{*}\iota_{D(\As)})'\widetilde{\cA}\,\iota_{R(\As)}^{*}$
and
\begin{align*}
|\widetilde{\A}\,x|_{D(\As)'}
&=|\widetilde{\cA}\,\iota_{R(\As)}^{*}x|_{D(\cAs)'},
&
|\widetilde{\A}|_{\H_{1}\to D(\As)'}
&=|\widetilde{\cA}|_{R(\As)\to D(\cAs)'},
\end{align*}
\item[\bf(ii)]
$\widetilde{\As}=(\iota_{R(\As)}^{*}\iota_{D(\A)})'\widetilde{\cAs}\iota_{R(\A)}^{*}$
and 
\begin{align*}
|\widetilde{\As}\,x|_{D(\A)'}
&=|\widetilde{\cAs}\,\iota_{R(\A)}x|_{D(\cA)'}^{*},
&
|\widetilde{\As}|_{\H_{2}\to D(\A)'}
&=|\widetilde{\cAs}|_{R(\A)\to D(\cA)'}.
\end{align*}
\end{itemize}
\end{lem}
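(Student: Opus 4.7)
My plan is to prove (i) by a direct unwinding of all the embeddings, projections, and Riesz identifications; then (ii) will follow from the completely symmetric argument applied to the dual pair $(\As,\A)=(\A,\As)$ with roles swapped.

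The first step is to establish the operator identity $\widetilde{\A}=(\iota_{R(\A)}^{*}\iota_{D(\As)})'\widetilde{\cA}\,\iota_{R(\As)}^{*}$ by testing it against an arbitrary $\phi\in D(\As)$ and $x\in\H_{1}$. Using the Helmholtz decomposition in $\H_{2}$ (which is available since $R(\A)$ is closed), I would write $\phi=\pi_{R(\A)}\phi+(1-\pi_{R(\A)})\phi$, where $(1-\pi_{R(\A)})\phi\in N(\As)$ and $\pi_{R(\A)}\phi=\iota_{R(\A)}^{*}\iota_{D(\As)}\phi\in D(\cAs)$ by the identification \eqref{piRADA}. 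Analogously, $(1-\pi_{R(\As)})x\in R(\As)^{\bot_{\H_{1}}}=N(\A)$ and hence is orthogonal to $\As\phi\in R(\As)$. This will allow me to compute
\begin{align*}
\widetilde{\A}\,x(\phi)
=\scp{\As\phi}{x}_{\H_{1}}
=\scp{\cAs\pi_{R(\A)}\phi}{\pi_{R(\As)}x}_{\H_{1}}
=\widetilde{\cA}\,\iota_{R(\As)}^{*}x\bigl(\iota_{R(\A)}^{*}\iota_{D(\As)}\phi\bigr),
\end{align*}
and the right-hand side is exactly $(\iota_{R(\A)}^{*}\iota_{D(\As)})'\widetilde{\cA}\,\iota_{R(\As)}^{*}x\,(\phi)$ by definition of the Banach space adjoint.

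For the pointwise norm equality, I would exploit that the graph norm of $D(\As)$ splits orthogonally along the Helmholtz decomposition of $\phi$, namely
\begin{align*}
\norm{\phi}_{D(\As)}^{2}
=\norm{\pi_{R(\A)}\phi}_{D(\cAs)}^{2}+\norm{(1-\pi_{R(\A)})\phi}_{\H_{2}}^{2}
\geq\norm{\pi_{R(\A)}\phi}_{D(\cAs)}^{2},
\end{align*}
since $\As(1-\pi_{R(\A)})\phi=0$. Combined with the computation above, this gives $|\widetilde{\A}\,x|_{D(\As)'}\le|\widetilde{\cA}\,\iota_{R(\As)}^{*}x|_{D(\cAs)'}$. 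The reverse inequality follows by testing only against $\phi\in D(\cAs)$: since $D(\cAs)=D(\As)\cap R(\A)\subset D(\As)$ with matching graph norms, every admissible test element for $\widetilde{\cA}\,\iota_{R(\As)}^{*}x$ is an admissible test element for $\widetilde{\A}\,x$ of equal norm.

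The operator norm identity then comes from taking the supremum over $x$: one direction uses $\|\iota_{R(\As)}^{*}x\|_{\H_{1}}\le\|x\|_{\H_{1}}$, and the other uses that for $x\in R(\As)\subset\H_{1}$ one has $\iota_{R(\As)}^{*}x=x$, so the supremum over $R(\As)$ is achieved within the supremum over $\H_{1}$. Statement (ii) is obtained by applying the entire argument to the dual pair with $\A$ and $\As$ interchanged (equivalently, using the Helmholtz decomposition in $\H_{1}$ and the fact that $N(\A)=R(\As)^{\bot_{\H_{1}}}$). The main bookkeeping obstacle is keeping the embeddings $\iota$, projections $\pi$, Riesz maps $\R$, and Hilbert/Banach adjoints consistently aligned, but once the two orthogonal decompositions are in place everything else reduces to a direct computation.
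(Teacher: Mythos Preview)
Your proposal is correct and follows essentially the same route as the paper's proof: both arguments rest on the Helmholtz decompositions $\phi=\pi_{R(\A)}\phi+(1-\pi_{R(\A)})\phi$ (with the kernel part annihilated by $\As$) and $x=\pi_{R(\As)}x+(1-\pi_{R(\As)})x$ (with the kernel part orthogonal to $R(\As)$), the identification $\pi_{R(\A)}\phi=\iota_{R(\A)}^{*}\iota_{D(\As)}\phi\in D(\cAs)$, and the fact that $\iota_{R(\As)}^{*}$ maps the unit ball of $\H_{1}$ onto the unit ball of $R(\As)$. The only cosmetic difference is that the paper writes the norm identities as a single chain of equalities over suprema, whereas you establish them via two separate inequalities using the orthogonal splitting $\norm{\phi}_{D(\As)}^{2}=\norm{\pi_{R(\A)}\phi}_{D(\cAs)}^{2}+\norm{(1-\pi_{R(\A)})\phi}_{\H_{2}}^{2}$ and the inclusion $D(\cAs)\subset D(\As)$; the underlying content is identical.
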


\begin{proof}
For $x\in\H_{1}$ and $\phi\in D(\As)$ we have 
$\pi_{R(\A)}\phi=\iota_{R(\A)}^{*}\iota_{D(\As)}\phi\in D(\cAs)$ and
\begin{align*}
\widetilde{\A}\,x(\phi)
=\scp{\As\phi}{x}_{\H_{1}}
&=\scp{\pi_{R(\As)}\As\pi_{R(\A)}\phi}{x}_{\H_{1}}
=\scp{\As\pi_{R(\A)}\phi}{\pi_{R(\As)}x}_{\H_{1}}\\
&=\scp{\cAs\iota_{R(\A)}^{*}\iota_{D(\As)}\phi}{\iota_{R(\As)}^{*}x}_{R(\As)}
=\widetilde{\cA}\,\iota_{R(\As)}^{*}x(\iota_{R(\A)}^{*}\iota_{D(\As)}\phi)\\
&=(\iota_{R(\A)}^{*}\iota_{D(\As)})'\widetilde{\cA}\,\iota_{R(\As)}^{*}x(\phi).
\end{align*}
Moreover, by the latter computations for $x\in\H_{1}$
\begin{align*}
|\widetilde{\A}\,x|_{D(\As)'}
=\sup_{\substack{\phi\in D(\As)\\|\phi|_{D(\As)}\leq1}}\scp{\As\phi}{x}_{\H_{1}}
&=\sup_{\substack{\phi\in D(\As)\\|\phi|_{D(\As)}\leq1}}\scp{\As\pi_{R(\A)}\phi}{\pi_{R(\As)}x}_{\H_{1}}\\
&=\sup_{\substack{\psi\in D(\cAs)\\|\psi|_{D(\As)}\leq1}}\scp{\cAs\psi}{\iota_{R(\As)}^{*}x}_{\H_{1}}
=|\widetilde{\cA}\,\iota_{R(\As)}^{*}x|_{D(\cAs)'}
\end{align*}
and thus 
\begin{align*}
|\widetilde{\A}|_{\H_{1}\to D(\As)'}
=\sup_{\substack{x\in\H_{1}\\|x|_{\H_{1}}\leq1}}|\widetilde{\A}\,x|_{D(\As)'}
=\sup_{\substack{x\in\H_{1}\\|x|_{\H_{1}}\leq1}}|\widetilde{\cA}\,\iota_{R(\As)}^{*}x|_{D(\cAs)'}
=\sup_{\substack{z\in R(\As)\\|z|_{\H_{1}}\leq1}}|\widetilde{\cA}\,z|_{D(\cAs)'}
=|\widetilde{\cA}|_{R(\As)\to D(\cAs)'}.
\end{align*}
The assertions in (ii) follow analogously.
\end{proof}

The next result from \cite{waurick2018a} is crucial for the further considerations.
We give a slightly modified version.

\begin{lem}[{\cite[Corollary 2.6]{waurick2018a}}]
\mylabel{moeppisuperlem}
Let $R(\A)$ be closed. 
\begin{itemize}
\item[\bf(i)]
For $(x_{n})\subset\H_{1}$ the following statements are equivalent:
\begin{itemize}
\item[\bf(i$_{1}$)]
$\big(\widetilde{\A}\,x_{n}\big)$ is relatively compact in $D(\As)'$.
\item[\bf(i$_{2}$)]
$\big(\widetilde{\cA}\,\iota_{R(\As)}^{*}x_{n}\big)$ is relatively compact in $D(\cAs)'$.
\item[\bf(i$_{3}$)]
$(\iota_{R(\As)}^{*}x_{n})$ is relatively compact in $R(\As)$.
\item[\bf(i$_{4}$)]
$(\pi_{R(\As)}x_{n})$ is relatively compact in $\H_{1}$.
\item[\bf(i$_{5}$)]
$(\R_{R(\As)}\iota_{R(\As)}^{*}x_{n})$ is relatively compact in $R(\As)'$.
\end{itemize}
If $x_{n}\wto x\in\H_{1}$ in $\H_{1}$,
then either of the latter conditions (i$_{1}$)-(i$_{5}$)
implies $\iota_{R(\As)}^{*}x_{n}\to\iota_{R(\As)}^{*}x$ in $R(\As)$ 
and $\pi_{R(\As)}x_{n}\to\pi_{R(\As)}x$ in $\H_{1}$.
\item[\bf(ii)]
For $(y_{n})\subset\H_{2}$ the following statements are equivalent:
\begin{itemize}
\item[\bf(ii$_{1}$)]
$\big(\widetilde{\As}\,y_{n}\big)$ is relatively compact in $D(\A)'$.
\item[\bf(ii$_{2}$)]
$\big(\widetilde{\cAs}\,\iota_{R(\A)}^{*}y_{n}\big)$ is relatively compact in $D(\cA)'$.
\item[\bf(ii$_{3}$)]
$(\iota_{R(\A)}^{*}y_{n})$ is relatively compact in $R(\A)$.
\item[\bf(ii$_{4}$)]
$(\pi_{R(\A)}y_{n})$ is relatively compact in $\H_{2}$.
\item[\bf(ii$_{5}$)]
$(\R_{R(\A)}\iota_{R(\A)}^{*}y_{n})$ is relatively compact in $R(\A)'$.
\end{itemize}
If $y_{n}\wto x\in\H_{2}$ in $\H_{2}$,
then either of the latter conditions (ii$_{1}$)-(ii$_{5}$)
implies $\iota_{R(\A)}^{*}y_{n}\to\iota_{R(\A)}^{*}y$ in $R(\A)$ 
and $\pi_{R(\A)}y_{n}\to\pi_{R(\A)}y$ in $\H_{2}$.
\end{itemize}
\end{lem}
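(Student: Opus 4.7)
The plan is to reduce part (i) to a routine transfer of relative compactness along isometries and topological isomorphisms; part (ii) then follows by swapping the roles of $\A$ and $\As$, since Lemmas \ref{adjointlemmaAAs} and \ref{adjointlemmaAAspi} are stated symmetrically for the dual pair. The guiding idea is that, under the closed range hypothesis, all five conditions in (i) can be pulled back to the single statement ``$(\iota_{R(\As)}^{*}x_{n})$ is relatively compact in $R(\As)$'' via norm-preserving maps.

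Concretely, I would argue as follows. First, the three intrinsic conditions (i$_{3}$), (i$_{4}$), (i$_{5}$) are equivalent: the canonical embedding $\iota_{R(\As)}:R(\As)\hookrightarrow\H_{1}$ is an isometry and $\pi_{R(\As)}=\iota_{R(\As)}\iota_{R(\As)}^{*}$, giving (i$_{3}$) $\Leftrightarrow$ (i$_{4}$); and the Riesz map $\R_{R(\As)}:R(\As)\to R(\As)'$ is an isometric isomorphism, giving (i$_{3}$) $\Leftrightarrow$ (i$_{5}$). Next, Lemma \ref{adjointlemmaAAs}(i) yields that $\widetilde{\cA}:R(\As)\to D(\cAs)'$ is a topological isomorphism, and a topological isomorphism transports relative compactness in both directions, hence (i$_{2}$) $\Leftrightarrow$ (i$_{3}$). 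Finally, Lemma \ref{adjointlemmaAAspi}(i) supplies the pointwise identity
$$|\widetilde{\A}\,x|_{D(\As)'}=|\widetilde{\cA}\,\iota_{R(\As)}^{*}x|_{D(\cAs)'},$$
so applying it to differences $x=x_{n}-x_{m}$ shows that $(\widetilde{\A}\,x_{n})$ is Cauchy in $D(\As)'$ iff $(\widetilde{\cA}\,\iota_{R(\As)}^{*}x_{n})$ is Cauchy in $D(\cAs)'$. Since both target spaces are complete, relative compactness transfers as well, which gives (i$_{1}$) $\Leftrightarrow$ (i$_{2}$).

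For the weak-convergence addendum, I would first note that $\iota_{R(\As)}^{*}:\H_{1}\to R(\As)$ is bounded and linear and therefore weak-to-weak continuous, so $x_{n}\wto x$ in $\H_{1}$ implies $\iota_{R(\As)}^{*}x_{n}\wto\iota_{R(\As)}^{*}x$ in $R(\As)$. Under any of the (now equivalent) conditions (i$_{1}$)--(i$_{5}$) the sequence $(\iota_{R(\As)}^{*}x_{n})$ is relatively compact in $R(\As)$; every norm-convergent subsequence must then have $\iota_{R(\As)}^{*}x$ as its limit by uniqueness of weak limits, and the Urysohn subsequence principle forces the full sequence $(\iota_{R(\As)}^{*}x_{n})$ to converge to $\iota_{R(\As)}^{*}x$ in norm. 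Applying the bounded operator $\iota_{R(\As)}$ then transfers this to $\pi_{R(\As)}x_{n}\to\pi_{R(\As)}x$ in $\H_{1}$.

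The argument is essentially bookkeeping: the serious analytic content, namely that closedness of $R(\A)$ makes $\widetilde{\cA}$ a topological isomorphism factoring $\widetilde{\A}$ norm-preservingly through $\iota_{R(\As)}^{*}$, has already been done upstream. The only place where I expect a slight pitfall is in the addendum, where one must be disciplined about which weak convergence takes place in $\H_{1}$ versus in $R(\As)$ and invoke the Urysohn subsequence principle correctly; once this is handled, the rest of the proof is a clean chain of transfers along the isometries $\iota_{R(\As)}$, $\R_{R(\As)}$ and the isomorphism $\widetilde{\cA}$.
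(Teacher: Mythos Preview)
Your proposal is correct and follows essentially the same route as the paper's proof: the paper also invokes Lemma~\ref{adjointlemmaAAs}(i) to get the equivalence of (i$_{2}$)--(i$_{5}$) in one stroke (since $\widetilde{\cA}=(\cAs)'\R_{R(\As)}$ is a composition of topological isomorphisms), then Lemma~\ref{adjointlemmaAAspi}(i) for (i$_{1}$)$\Leftrightarrow$(i$_{2}$), and finally the same weak-to-weak continuity plus subsequence argument for the addendum. Your write-up is simply more explicit about each step, in particular spelling out the Urysohn principle where the paper says ``by a subsequence argument''.
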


\begin{proof}
By Lemma \ref{adjointlemmaAAs} (i)
$\widetilde{\cA}=(\cAs)'\R_{R(\As)}:R(\As)\to D(\cAs)'$ 
is a topological isomorphism.
Hence (i$_{2}$)-(i$_{5}$) are equivalent.
The equivalence of (i$_{1}$) and (i$_{2}$) follows by Lemma \ref{adjointlemmaAAspi} (i).
If $x_{n}\wto x$ in $\H_{1}$,
then $\iota_{R(\As)}^{*}x_{n}\wto\iota_{R(\As)}^{*}x$ in $R(\As)$ 
and $\pi_{R(\As)}x_{n}\wto\pi_{R(\As)}x$ in $\H_{1}$.
By a subsequence argument we see that, e.g., (i$_{3}$)
implies $\iota_{R(\As)}^{*}x_{n}\to\iota_{R(\As)}^{*}x$ in $R(\As)$ 
and hence $\pi_{R(\As)}x_{n}\to\pi_{R(\As)}x$ in $\H_{1}$.
Analogously we show (ii).
\end{proof}

With this latter key observation 
we can prove a general (distributional) $\Azs$-$\Ao$-lemma. 
For this, we introducing two bounded linear operators 
$\Az:D(\Az)\to\H_{1}$, $\Ao:D(\Ao)\to\H_{2}$
satisfying the complex property $\Ao\Az=0$
and recall the linear extensions of $\Ao$, $\cAo$ and $\Azs$, $\cAzs$
\begin{align*}
\widetilde{\Ao}:=(\Aos)'\R_{\H_{1}}:\H_{1}&\to D(\Aos)',
&
\widetilde{\Azs}:=\Az'\R_{\H_{1}}:\H_{1}&\to D(\Az)',\\
\widetilde{\cAo}:=(\cAos)'\R_{R(\Aos)}:R(\Aos)&\to D(\cAos)',
&
\widetilde{\cAzs}:=\cAz'\R_{R(\Az)}:R(\Az)&\to D(\cAz)'.
\end{align*}

\begin{theo}[generalized $\Azs$-$\Ao$-lemma, {\cite[Theorem 2.4]{waurick2018a}}]
\mylabel{gen-div-rot-lem-gen-theo}
Let the ranges $R(\Az)$ and $R(\Ao)$ be closed and let $N_{0,1}$ be finite-dimensional.
Moreover, let $(x_{n}),(y_{n})\subset\H_{1}$ be two
bounded sequences such that
\begin{itemize}
\item
$(\widetilde{\Ao}\,x_{n})$ is relatively compact in $D(\Aos)'$,
\item
$(\widetilde{\Azs}\,y_{n})$ is relatively compact in $D(\Az)'$.
\end{itemize}
Then there exist $x,y\in\H_{1}$ as well as subsequences, again denoted by $(x_{n})$ and $(y_{n})$, 
such that
\begin{itemize}
\item
$x_{n}\wto x$ in $\H_{1}$,
\item
$y_{n}\wto y$ in $\H_{1}$,
\item
$\scp{x_{n}}{y_{n}}_{\H_{1}}\to\scp{x}{y}_{\H_{1}}$.
\end{itemize}
\end{theo}

\begin{rem}
\mylabel{gen-div-rot-lem-gen-theo-rem}
By Lemma \ref{moeppisuperlem} the assumptions 
on the relative compactness can be replaced equivalently by the assumptions that 
$(\widetilde{\cAo}\,\iota_{R(\Aos)}^{*}x_{n})$ is relatively compact in $D(\cAos)'$ and that
$(\widetilde{\cAzs}\,\iota_{R(\Az)}^{*}y_{n})$ is relatively compact in $D(\cAz)'$.
\end{rem}

\begin{proof}[Proof of Theorem \ref{gen-div-rot-lem-gen-theo}]
Let $(x_{n}),(y_{n})\subset\H_{1}$ be two bounded sequences.
W.l.o.g. let $x_{n}\wto x$ and $y_{n}\wto y$ in $\H_{1}$.
By Lemma \ref{moeppisuperlem} 
$\pi_{R(\Aos)}x_{n}\to\pi_{R(\Aos)}x$
and $\pi_{R(\Az)}y_{n}\to\pi_{R(\Az)}y$ in $\H_{1}$.
By Lemma \ref{helmrefined}, in particular \eqref{helmrefinedtwo} (compare to \eqref{HelmdecoweakA}),
we have the Helmholtz decompositions
\begin{align}
\label{HelmdecoweakAtwo}
\begin{split}
x_{n}=\pi_{R(\Az)}x_{n}+\pi_{N_{0,1}}x_{n}+\pi_{R(\Aos)}x_{n}&\in R(\Az)\oplus_{\H_{1}}N_{0,1}\oplus_{\H_{1}}R(\Aos),\\
y_{n}=\pi_{R(\Az)}y_{n}+\pi_{N_{0,1}}y_{n}+\pi_{R(\Aos)}y_{n}&\in R(\Az)\oplus_{\H_{1}}N_{0,1}\oplus_{\H_{1}}R(\Aos),
\end{split}
\end{align}
yielding (compare to \eqref{calcweakAstrong})
\begin{align}
\label{calcweakA}
\scp{x_{n}}{y_{n}}_{\H_{1}}
&=\scp{\pi_{R(\Aos)}x_{n}}{y_{n}}_{\H_{1}}
+\scp{\pi_{N_{0,1}}x_{n}}{y_{n}}_{\H_{1}}
+\scp{x_{n}}{\pi_{R(\Az)}y_{n}}_{\H_{1}}.
\end{align}
Similar to \eqref{HelmdecoweakAtwo} we can decompose $x$ and $y$
and w.l.o.g. we can assume that $\pi_{N_{0,1}}x_{n}\to\pi_{N_{0,1}}x$
as $N_{0,1}$ has finite dimension. Finally it follows
$$\scp{x_{n}}{y_{n}}_{\H_{1}}
\to\scp{\pi_{R(\Aos)}x}{y}_{\H_{1}}
+\scp{\pi_{N_{0,1}}x}{y}_{\H_{1}}
+\scp{x}{\pi_{R(\Az)}y}_{\H_{1}}
=\scp{x}{y}_{\H_{1}},$$
completing the proof.
\end{proof}

Now, we make the connection to Theorem \ref{gen-div-rot-lem}
and show that the assumptions in Theorem \ref{gen-div-rot-lem}
imply those of Theorem \ref{gen-div-rot-lem-gen-theo}.

\begin{lem}[{\cite[Corollary 2.7]{waurick2018a}}]
\mylabel{moeppicptpiconv}
Let either $\A\!:\!D(\A)\subset\H_{1}\to\H_{2}$ be a densely defined and closed linear operator
or $\A\!:\!D(\A)\to\H_{2}$ be a continuous linear operator.
Moreover, let $D(\cA)\cptemb\H_{1}$ be compact.
\begin{itemize}
\item[\bf(i)]
Let $(x_{n})\subset D(\A)$ be bounded in $D(\A)$.
Then $(\pi_{R(\As)}x_{n})$ is relatively compact in $\H_{1}$.
Equivalently, $\big(\widetilde{\A}\,x_{n}\big)$ is relatively compact in $D(\As)'$.
\item[\bf(ii)]
Let $(y_{n})\subset D(\As)$ be bounded in $D(\As)$.
Then$(\pi_{R(\A)}y_{n})$ is relatively compact in $\H_{2}$.
Equivalently, $\big(\widetilde{\As}\,y_{n}\big)$ is relatively compact in $D(\A)'$.
\end{itemize}
\end{lem}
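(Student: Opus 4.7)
The plan is to reduce the statement to the compact embedding hypothesis $D(\cA) \cptemb \H_1$ via a Helmholtz projection argument. First I would invoke Lemma \ref{cptequi}, which together with the compactness hypothesis yields the Friedrichs/Poincar\'e estimate of Lemma \ref{poincarerange} (i), and in particular that both $R(\A)$ and $R(\As)$ are closed. Hence the (non-closure) orthogonal decomposition $\H_1 = N(\A) \oplus_{\H_1} R(\As)$ is available, together with its refinement $D(\A) = N(\A) \oplus_{\H_1} D(\cA)$ from Lemma \ref{poincarerange}.

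For (i), given a bounded sequence $(x_n) \subset D(\A)$, I would split $x_n = (1 - \pi_{R(\As)}) x_n + \pi_{R(\As)} x_n$. Since $(1 - \pi_{R(\As)}) x_n \in N(\A) \subset D(\A)$, the refined decomposition forces $\pi_{R(\As)} x_n \in D(\A) \cap R(\As) = D(\cA)$. Because $\A$ annihilates $N(\A)$, one has $\A \pi_{R(\As)} x_n = \A x_n$, while the orthogonal projection is non-expansive in $\H_1$, so
\[
\|\pi_{R(\As)} x_n\|_{D(\A)}^2 = \|\pi_{R(\As)} x_n\|_{\H_1}^2 + \|\A x_n\|_{\H_2}^2 \leq \|x_n\|_{D(\A)}^2.
\]
Thus $(\pi_{R(\As)} x_n)$ is bounded in $D(\cA)$, and the compact embedding $D(\cA) \cptemb \H_1$ yields a subsequence converging in $\H_1$, proving relative compactness. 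The equivalent formulation in $D(\As)'$ then follows at once from the equivalence (i$_1$)$\Leftrightarrow$(i$_4$) in Lemma \ref{moeppisuperlem}.

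Part (ii) is obtained by applying exactly the same argument to the dual pair $(\As, \A)$ in place of $(\A, \As)$: Lemma \ref{cptequi} shows that the hypothesis $D(\cA) \cptemb \H_1$ is equivalent to $D(\cAs) \cptemb \H_2$, and $(\A^{*})^{*} = \A$ makes the roles of $\A$ and $\As$ completely symmetric. The equivalent distributional statement in $D(\A)'$ invokes Lemma \ref{moeppisuperlem} (ii) instead.

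The main (and really only) subtlety is verifying that $\pi_{R(\As)}$ maps $D(\A)$ into itself, in fact into $D(\cA)$. This is not automatic for a general unbounded operator, but it is precisely the content of the refined Helmholtz decomposition $D(\A) = N(\A) \oplus_{\H_1} D(\cA)$, which holds as soon as $R(\A)$ is closed. The continuous case $\A: D(\A) \to \H_2$ is handled identically once $\A$ is regarded as closed with respect to the graph norm on $D(\A)$, so the same functional-analytic toolbox from Section 4.1 applies verbatim.
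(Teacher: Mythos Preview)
Your proof is correct and follows essentially the same approach as the paper: invoke Lemma \ref{cptequi} to obtain closed ranges, use the refined Helmholtz decomposition (the paper cites \eqref{piRADA} and \eqref{DacA}) to see that $\pi_{R(\As)}x_{n}\in D(\cA)$ with a bound in the graph norm, apply the compact embedding, and read off the distributional equivalence from Lemma \ref{moeppisuperlem}. Your write-up is simply more explicit about the boundedness estimate and the symmetry argument for (ii), but the underlying argument is the same.
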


\begin{proof}
By Lemma \ref{cptequi} the compactness of $D(\cA)\cptemb\H_{1}$
yields the closedness of $R(\A)$. Hence Lemma \ref{moeppisuperlem} is applicable.
Let $(x_{n})\subset D(\A)$ be bounded in $D(\A)$.
Then by \eqref{piRADA}, see also \eqref{DacA},
$(\pi_{R(\As)}x_{n})\subset D(\cA)$ is bounded in $D(\cA)$.
Hence it contains a subsequence converging in $\H_{1}$.
Lemma \ref{moeppisuperlem} shows the equivalence to the second relative compactness.
Analogously we prove the assertions in (ii).
\end{proof}

For two linear operators $\Az$ and $\Ao$ as in Lemma \ref{moeppicptpiconv}, i.e.,
bounded or unbounded, densely defined and closed,
satisfying the complex property $\Ao\Az=0$ we obtain the following results.

\begin{lem}
\mylabel{moeppiassequi}
Let $D(\Ao)\cap D(\Azs)\cptemb\H_{1}$ be compact.
Moreover, let $(x_{n})\subset D(\Ao)$ and $(y_{n})\subset D(\Azs)$ 
be two sequences bounded in $D(\Ao)$ and $D(\Azs)$, respectively. 
Then:
\begin{itemize}
\item[\bf(i)]
$(\pi_{R(\Aos)}x_{n})$ is relatively compact in $\H_{1}$.
Equivalently, $\big(\widetilde{\Ao}\,x_{n}\big)$ is relatively compact in $D(\Aos)'$.
\item[\bf(ii)]
$(\pi_{R(\Az)}y_{n})$ is relatively compact in $\H_{1}$.
Equivalently, $\big(\widetilde{\Azs}\,y_{n}\big)$ is relatively compact in $D(\Az)'$.
\item[\bf(iii)]
$(\pi_{N_{0,1}}x_{n})$ and $(\pi_{N_{0,1}}y_{n})$ are relatively compact in $\H_{1}$.
\end{itemize}
\end{lem}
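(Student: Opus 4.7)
The plan is to invoke the refined Helmholtz decomposition \eqref{helmrefinedthree}, which applies by Theorem \ref{compembtheo} under the standing compactness hypothesis (all ranges $R(\Az), R(\Aos), R(\Ao), R(\Aos)$ are then closed, $N_{0,1}$ is finite-dimensional, and the relevant Friedrichs/Poincar\'e estimates hold). The strategy is to isolate the middle parts of the decompositions and show they land in $D(\Ao)\cap D(\Azs)$, where compactness is available.

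First I would write, using \eqref{helmrefinedtwo},
\begin{align*}
x_{n} &= \pi_{R(\Az)}x_{n} + \pi_{N_{0,1}}x_{n} + \pi_{R(\Aos)}x_{n},\\
y_{n} &= \pi_{R(\Az)}y_{n} + \pi_{N_{0,1}}y_{n} + \pi_{R(\Aos)}y_{n},
\end{align*}
and exploit the complex properties $R(\Az)\subset N(\Ao)$ and $R(\Aos)\subset N(\Azs)$ together with $N_{0,1}\subset N(\Ao)\cap N(\Azs)$. For (i) this yields
$$\Ao\,\pi_{R(\Aos)}x_{n}=\Ao\,x_{n},\qquad \Azs\,\pi_{R(\Aos)}x_{n}=0,$$
so by \eqref{helmrefinedthree} the sequence $\pi_{R(\Aos)}x_{n}$ actually lies in $D(\cAo)\subset D(\Ao)\cap D(\Azs)$ and is bounded there (boundedness in $\H_{1}$ is inherited from orthogonality, boundedness of $\Ao$-part from boundedness of $(x_{n})$ in $D(\Ao)$). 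The compact embedding $D(\Ao)\cap D(\Azs)\cptemb\H_{1}$ then gives a convergent subsequence in $\H_{1}$. The equivalent formulation in terms of $\widetilde{\Ao}\,x_{n}$ in $D(\Aos)'$ is immediate from Lemma \ref{moeppisuperlem}(i) applied to $\A=\Ao$.

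For (ii) the argument is the mirror image: using the analogous decomposition of $D(\Azs)$ in \eqref{helmrefinedthree}, the piece $\pi_{R(\Az)}y_{n}$ lies in $D(\cAzs)\subset D(\Ao)\cap D(\Azs)$ and satisfies $\Azs\,\pi_{R(\Az)}y_{n}=\Azs\,y_{n}$ and $\Ao\,\pi_{R(\Az)}y_{n}=0$, whence it is bounded in $D(\Ao)\cap D(\Azs)$ and thus relatively compact in $\H_{1}$; Lemma \ref{moeppisuperlem}(ii) applied to $\A=\Az$ (so that $\As=\Azs$) then transfers this into relative compactness of $\widetilde{\Azs}\,y_{n}$ in $D(\Az)'$. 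Part (iii) is then essentially free: $(\pi_{N_{0,1}}x_{n})$ and $(\pi_{N_{0,1}}y_{n})$ are bounded in the finite-dimensional space $N_{0,1}$ (since orthogonal projections are contractions and $(x_{n}),(y_{n})$ are bounded in $\H_{1}$), and bounded sequences in finite dimensions are relatively compact.

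There is no real obstacle here; this lemma is a clean bookkeeping consequence of the structural results already established. The one subtle point worth verifying explicitly is that the orthogonal $\H_{1}$-decomposition $x_{n}=\pi_{R(\Az)}x_{n}+\pi_{N_{0,1}}x_{n}+\pi_{R(\Aos)}x_{n}$ does preserve membership in $D(\Ao)$, and in particular that the $\pi_{R(\Aos)}$-piece actually sits in $D(\cAo)$ (and symmetrically that the $\pi_{R(\Az)}$-piece of $y_{n}$ sits in $D(\cAzs)$); this is precisely the content of the refined decomposition \eqref{helmrefinedthree}, so once that is invoked the argument is routine.
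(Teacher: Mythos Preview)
Your proof is correct and follows essentially the same route as the paper. The only cosmetic difference is that the paper first invokes Lemma~\ref{compemblem} to extract the compact embeddings $D(\cAz)\cptemb\H_{0}$, $D(\cAo)\cptemb\H_{1}$ and $\dim N_{0,1}<\infty$, and then cites Lemma~\ref{moeppicptpiconv} (which in turn is exactly your argument: $\pi_{R(\Aos)}x_{n}\in D(\cAo)$ bounded, compact embedding, then Lemma~\ref{moeppisuperlem} for the equivalence), whereas you unpack that lemma inline and use the original embedding $D(\Ao)\cap D(\Azs)\cptemb\H_{1}$ directly rather than the reduced one $D(\cAo)\cptemb\H_{1}$.
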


\begin{proof}
By Lemma \ref{compemblem} 
$D(\cAz)\cptemb\H_{0}$, $D(\cAo)\cptemb\H_{1}$, $N_{0,1}\cptemb\H_{1}$
are compact, in particular, $N_{0,1}$ is finite dimensional, showing (iii).
Lemma \ref{moeppicptpiconv} yields (i) and (ii).
\end{proof}

\begin{rem}
\mylabel{moeppiassequirem}
By Lemma \ref{cptequi} and Lemma \ref{compemblem}  
the compactness of $D(\Ao)\cap D(\Azs)\cptemb\H_{1}$
implies the closedness of the ranges $R(\Az)$ and $R(\Ao)$
and the finite dimensionality of $N_{0,1}$.
Thus Lemma \ref{moeppiassequi} shows that the proof of Theorem \ref{gen-div-rot-lem-gen-theo}
provides another and different proof for Theorem \ref{gen-div-rot-lem}.
\end{rem}

The above considerations lead to the following insight, which is interesting on its own right.

\begin{lem}
\mylabel{relcptimplitwo}
Let $R(\Az)$ and $R(\Ao)$ be closed.
For a sequence $(x_{n})\subset\H_{1}$ the following assertions are equivalent:
\begin{itemize}
\item[\bf(i)]
$(x_{n})$ is relatively compact in $\H_{1}$.
\item[\bf(ii)]
$(\pi_{R(\Aos)}x_{n})$, $(\pi_{R(\Az)}x_{n})$, and $(\pi_{N_{0,1}}x_{n})$
are relatively compact in $\H_{1}$.
\item[\bf(iii)]
$\big(\widetilde{\Azs}\,x_{n}\big)$, 
$\big(\widetilde{\Ao}\,x_{n}\big)$,
and $(\pi_{N_{0,1}}x_{n})$ are relatively compact 
in $D(\Az)'$, $D(\Aos)'$, and $\H_{1}$, respectively.
\end{itemize}
Moreover, if $(x_{n})\subset D(\Ao)\cap D(\Azs)$ is bounded in $D(\Ao)\cap D(\Azs)$
and $D(\Ao)\cap D(\Azs)\cptemb\H_{1}$ is compact,
then (i), (ii), and (iii) hold.
\end{lem}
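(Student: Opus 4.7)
The plan is to reduce the three statements to the orthogonal Helmholtz decomposition from \eqref{helmrefinedtwo} and then invoke Lemma \ref{moeppisuperlem} twice, once for the dual pair $(\Ao,\Aos)$ and once for the dual pair $(\Azs,\Az)$.

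First I would observe that, by Lemma \ref{poincarerange}, closedness of $R(\Az)$ and $R(\Ao)$ forces closedness of $R(\Azs)$ and $R(\Aos)$ as well, so \eqref{helmrefinedtwo} reads
$$\H_{1}=R(\Az)\oplus_{\H_{1}}N_{0,1}\oplus_{\H_{1}}R(\Aos).$$
Hence every $x_{n}$ admits the orthogonal decomposition
$$x_{n}=\pi_{R(\Az)}x_{n}+\pi_{N_{0,1}}x_{n}+\pi_{R(\Aos)}x_{n}.$$

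For the equivalence of (i) and (ii), the direction (i)$\Rightarrow$(ii) is immediate since the three orthogonal projectors are bounded. Conversely, by orthogonality we have the Pythagorean identity
$$\norm{x_{n}-x_{m}}_{\H_{1}}^{2}
=\norm{\pi_{R(\Az)}(x_{n}-x_{m})}_{\H_{1}}^{2}
+\norm{\pi_{N_{0,1}}(x_{n}-x_{m})}_{\H_{1}}^{2}
+\norm{\pi_{R(\Aos)}(x_{n}-x_{m})}_{\H_{1}}^{2},$$
so after passing to a common subsequence on which all three projected sequences are Cauchy in $\H_{1}$, $(x_{n})$ itself is Cauchy in $\H_{1}$.

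For the equivalence of (ii) and (iii), I would apply Lemma \ref{moeppisuperlem} (i) twice. Applied to the dual pair $\A=\Ao$, $\As=\Aos$, it gives
$$\bigl(\pi_{R(\Aos)}x_{n}\bigr)\text{ relatively compact in }\H_{1}
\;\Longleftrightarrow\;
\bigl(\widetilde{\Ao}\,x_{n}\bigr)\text{ relatively compact in }D(\Aos)'.$$
Applied to the dual pair $\A=\Azs$, $\As=(\Azs)^{*}=\Az$ (which is legitimate since $\Azs$ is itself densely defined and closed with closed range), it gives
$$\bigl(\pi_{R(\Az)}x_{n}\bigr)\text{ relatively compact in }\H_{1}
\;\Longleftrightarrow\;
\bigl(\widetilde{\Azs}\,x_{n}\bigr)\text{ relatively compact in }D(\Az)'.$$
The $\pi_{N_{0,1}}$-component is common to both (ii) and (iii), so combining the two equivalences completes the chain (ii)$\Leftrightarrow$(iii). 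Finally, the \emph{moreover} clause is immediate: any sequence bounded in $D(\Ao)\cap D(\Azs)$ has, under the compactness assumption $D(\Ao)\cap D(\Azs)\cptemb\H_{1}$, a subsequence converging in $\H_{1}$, which is (i), and then (ii) and (iii) follow from the established equivalences. The only delicate point is to verify the hypotheses of Lemma \ref{moeppisuperlem} for the pair $(\Azs,\Az)$, which is just a matter of recalling $(\Azs)^{*}=\Az$ and that $R(\Azs)$ is closed whenever $R(\Az)$ is.
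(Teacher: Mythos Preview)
Your proof is correct and follows essentially the same route as the paper's: the Helmholtz decomposition \eqref{helmrefinedtwo} plus continuity of the projections gives (i)$\Leftrightarrow$(ii), and Lemma \ref{moeppisuperlem} applied to the two dual pairs yields (ii)$\Leftrightarrow$(iii). You have simply made explicit what the paper compresses into two sentences, including the Pythagorean identity for (ii)$\Rightarrow$(i) and the separate invocations of Lemma \ref{moeppisuperlem} for $(\Ao,\Aos)$ and $(\Azs,\Az)$.
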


\begin{proof}
By the continuity of the projections
and the Helmholtz decompositions \eqref{HelmdecoweakAtwo}, i.e.,
$$x_{n}=\pi_{R(\Az)}x_{n}+\pi_{N_{0,1}}x_{n}+\pi_{R(\Aos)}x_{n}\in R(\Az)\oplus_{\H_{1}}N_{0,1}\oplus_{\H_{1}}R(\Aos),$$
the relative compactness of $(x_{n})$ in $\H_{1}$ is equivalent to (ii),
which is equivalent to (iii) by Lemma \ref{moeppisuperlem}.
The last assertion follows by definition.
\end{proof}

\section{Applications}
\mylabel{appsec}

Whenever closed Hilbert complexes like \eqref{complexdiag} 
together with the corresponding compact embedding $D(\Ao)\cap D(\Azs)\cptemb\H_{1}$ occur, 
we can apply the general $\Azs$-$\Ao$-lemma, i.e., Theorem \ref{gen-div-rot-lem}.
In three dimensions we typically have three closed and densely defined linear operators
$\Az$, $\Ao$, and $\At$, satisfying the complex properties
$R(\Az)\subset N(\Ao)$ and $R(\Ao)\subset N(\At)$, i.e.,
\begin{align}
\begin{split}
\mylabel{complexdiagAzot}
\begin{CD}
D(\Az)\subset\H_{0} @> \Az >>
D(\Ao)\subset\H_{1} @> \Ao >>
D(\At)\subset\H_{2} @> \At >>
\H_{3},
\end{CD}\\
\begin{CD}
\H_{0} @< \Azs <<
D(\Azs)\subset\H_{1} @< \Aos <<
D(\Aos)\subset\H_{2} @< \Ats <<
D(\Ats)\subset\H_{3},
\end{CD}
\end{split}
\end{align}
together with the crucial compact embeddings
\begin{align}
\mylabel{crucialemb}
D(\Ao)\cap D(\Azs)&\cptemb\H_{1},
&
D(\At)\cap D(\Aos)&\cptemb\H_{2}.
\end{align}
With slightly weaker assumptions we can apply Theorem \ref{gen-div-rot-lem-gen-theo}.

Recalling our general assumptions on the underlying domain from Section \ref{defsec},
throughout this application section $\om$ can be a
\begin{itemize}
\item
weak Lipschitz domain with boundary $\ga$,
\item
weak Lipschitz domain with boundary $\ga$ and weak Lipschitz interfaces $\gat$ and $\gan$,
\item
strong Lipschitz domain with boundary $\ga$,
\item
strong Lipschitz domain with boundary $\ga$ and strong Lipschitz interfaces $\gat$ and $\gan$.
\end{itemize}
We extend this definition to $\om\subset\rN$ or Riemannian manifolds $\om$.

\subsection{The div-rot-Lemma Revisited}
\mylabel{appsecdivrotrev}

Let $\om\subset\rt$.
The first example is given by the classical operators from vector analysis
\begin{align*}
\Az:=\gradc_{\gat}:\hocgatom\subset\ltom&\To\ltepsom;
&
u&\mapsto\na u,\\
\Ao:=\mu^{-1}\rotc_{\gat}:\rcgatom\subset\ltepsom&\To\ltmuom;
&
E&\mapsto\mu^{-1}\rot E,\\
\At:=\divc_{\gat}\mu:\mu^{-1}\dcgatom\subset\ltmuom&\To\ltom;
&
H&\mapsto\div\mu H.
\intertext{$\Az$, $\Ao$, and $\At$ are unbounded, densely defined, and closed linear operators with adjoints}
\Azs=\gradc_{\gat}^{*}=-\divc_{\gan}\eps:\eps^{-1}\dcganom\subset\ltepsom&\To\ltom;
&
H&\mapsto-\div\eps H,\\
\Aos=(\mu^{-1}\rotc_{\gat})^{*}=\eps^{-1}\rotc_{\gan}:\rcganom\subset\ltmuom&\To\ltepsom;
&
E&\mapsto\eps^{-1}\rot E,\\
\Ats=(\divc_{\gat}\mu)^{*}=-\gradc_{\gan}:\hocganom\subset\ltom&\To\ltmuom;
&
u&\mapsto-\na u.
\end{align*}
Here, $\eps,\mu:\om\to\rttt$ are symmetric and uniformly positive definite $\liom$-tensor fields.
Moreover, the Hilbert-Lebesgue space $\ltepsom$ 
is defined as the standard Lebesgue space $\ltom$ but with an equivalent inner product
$\scp{\,\cdot\,}{\,\cdot\,}_{\ltepsom}:=\scpltom{\eps\,\cdot\,}{\,\cdot\,}$.
Analogously we define $\ltmuom$.
The complex properties hold as
\begin{align*}
R(\Az)=\gradc_{\gat}\hocgatom&\subset\rcgatzom=N(\Ao),
&
R(\Aos)=\eps^{-1}\rotc_{\gan}\rcganom&\subset\eps^{-1}\dcganzom=N(\Azs),\\
R(\Ao)=\mu^{-1}\rotc_{\gat}\rcgatom&\subset\mu^{-1}\dcgatzom=N(\At),
&
R(\Ats)=\gradc_{\gan}\hocganom&\subset\rcganzom=N(\Aos).
\end{align*}
Hence, the sequences \eqref{complexdiagAzot} read
$$\begin{CD}
\hocgatom\subset\ltom @> \A_{0}=\gradc_{\gat} >>
\rcgatom\subset\ltepsom @> \A_{1}=\mu^{-1}\rotc_{\gat} >>
\mu^{-1}\dcgatom\subset\ltmuom @> \A_{2}=\divc_{\gat}\mu >>
\ltom,
\end{CD}$$
$$\begin{CD}
\ltom @< \A_{0}^{*}=-\divc_{\gan}\eps <<
\eps^{-1}\dcganom\subset\ltepsom @< \A_{1}^{*}=\eps^{-1}\rotc_{\gan} <<
\rcganom\subset\ltmuom @< \A_{2}^{*}=-\gradc_{\gan} <<
\hocganom\subset\ltom.
\end{CD}$$
These are the well-known Hilbert complexes for electro-magnetics,
which are also known as de Rham complexes.
Typical equations arising from the de Rham complex are
systems of electro-magneto statics, e.g.,
\begin{align*}
\A_{1}E=\mu^{-1}\rotc_{\gat}E&=F,\\
\A_{0}^{*}E=-\divc_{\gan}\eps E&=f,
\end{align*}
or simply the Dirichlet-Neumann Laplacians and $\rot\rot$ systems, e.g.,
\begin{align*}
\A_{0}^{*}\A_{0}u=-\divc_{\gan}\eps\gradc_{\gat}u&=f,
&
\A_{1}^{*}\A_{1}E=\eps^{-1}\rotc_{\gan}\mu^{-1}\rotc_{\gat}E&=F,\\
&&
\A_{0}^{*}E=-\divc_{\gan}\eps E&=f.
\end{align*}
The crucial embeddings \eqref{crucialemb}
are compact by Weck's selection theorem, compare to Lemma \ref{weckstlem}.

\begin{lem}[Weck's selection theorem]
\label{weckstepsmulem}
Let $\om\subset\rt$ be a weak Lipschitz domain with weak Lipschitz interfaces.
Then the embeddings
\begin{align*}
D(\Ao)\cap D(\Azs)&=\rcgatom\cap\eps^{-1}\dcganom\cptemb\ltom,\\
D(\At)\cap D(\Aos)&=\mu^{-1}\dcgatom\cap\rcganom\cptemb\ltom
\end{align*}
are compact.
\end{lem}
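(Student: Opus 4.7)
The plan is to reduce the two weighted compactness results to the unweighted Weck selection theorem (Lemma \ref{weckstlem}) via the unweighted Helmholtz decomposition \eqref{helmsim} together with an energy identity. By symmetry --- swapping $\gat\leftrightarrow\gan$ and $\eps\leftrightarrow\mu$ converts the first embedding into the second --- it suffices to treat $\rcgatom\cap\eps^{-1}\dcganom\cptemb\ltom$. So I would take a bounded sequence $(E_n)\subset\rcgatom\cap\eps^{-1}\dcganom$. Uniform positive definiteness and boundedness of $\eps$ imply that $(E_n)$, $(\rot E_n)$, and $(\div(\eps E_n))$ are all bounded in $\ltom$, so after passing to weakly convergent subsequences I may assume $E_n\wto E$, $\rot E_n\wto\rot E$, and $\div(\eps E_n)\wto\div(\eps E)$ in $\ltom$, with $E\in\rcgatom\cap\eps^{-1}\dcganom$.

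First I would apply the unweighted Helmholtz decomposition \eqref{helmsim} to write $E_n=\na u_n+F_n$ orthogonally in $\ltom$, with $u_n\in\hocgatom$ and $F_n\in\dcganzom$. Orthogonality together with the Friedrichs/Poincar\'e estimate \eqref{fpest} bounds $(u_n)$ in $\hocgatom$; by Rellich \eqref{rellichst}, a subsequence satisfies $u_n\to u$ in $\ltom$ and $\na u_n\wto\na u$ in $\ltom$. Since $\rot F_n=\rot E_n$ is bounded and $\div F_n=0$, the sequence $(F_n)$ lies bounded in $\rcgatom\cap\dcganom$, so Lemma \ref{weckstlem} delivers a further subsequence with $F_n\to F$ in $\ltom$. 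Passing to the weak limit in $E_n=\na u_n+F_n$ then identifies $E=\na u+F$.

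The hard part is upgrading $\na u_n\wto\na u$ to strong convergence in $\ltom$, which is where the weight $\eps$ enters decisively. My plan is to use the energy identity coming from the generalized integration by parts \eqref{partintna} --- valid precisely because $u_n\in\hocgatom$ and $\eps E_n\in\dcganom$ --- to get
$$\|\na u_n\|_{\ltepsom}^{2}=\scpltom{\na u_n}{\eps E_n}-\scpltom{\na u_n}{\eps F_n}=-\scpltom{u_n}{\div(\eps E_n)}-\scpltom{\na u_n}{\eps F_n}.$$
Each summand on the right pairs a strongly convergent sequence with a weakly convergent one ($u_n\to u$ against $\div(\eps E_n)\wto\div(\eps E)$, and $\eps F_n\to\eps F$ against $\na u_n\wto\na u$), so the right-hand side converges; after applying \eqref{partintna} once more and substituting $E-F=\na u$, the limit collapses to $\|\na u\|_{\ltepsom}^{2}$. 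Norm convergence together with weak convergence in the Hilbert space $\ltepsom$ then forces $\na u_n\to\na u$ strongly in $\ltepsom$, hence in $\ltom$ by uniform positivity of $\eps$. Therefore $E_n=\na u_n+F_n\to\na u+F=E$ in $\ltom$. The main obstacle is this strong-convergence step for the gradient part; its resolution hinges on the weak characterization \eqref{defschwach} of the normal boundary condition on $\gan$, which is what legitimises the integration by parts \eqref{partintna} between the $\hocgatom$-potential $u_n$ and the $\dcganom$-flux $\eps E_n$.
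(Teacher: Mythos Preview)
Your argument is correct. The paper itself does not prove this lemma; it simply cites \cite[Theorem 4.7]{bauerpaulyschomburgmcpweaklip} and remarks that ``Weck's selection theorems are independent of the material law tensors $\eps$ or $\mu$''. So your route---a self-contained reduction of the weighted embedding to the unweighted Lemma~\ref{weckstlem} via the unweighted Helmholtz decomposition \eqref{helmsim} and an energy identity---is genuinely different from (and more explicit than) what the paper offers.

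One comment: your proof is, in effect, the proof of Theorem~\ref{div-rot-lem} run with the particular choice $H_{n}:=\eps E_{n}$. Indeed, since $(E_{n})\subset\rcgatom$ is bounded in $\rom$ and $(\eps E_{n})\subset\dcganom$ is bounded in $\dom$, Theorem~\ref{div-rot-lem} (which relies only on the \emph{unweighted} Lemma~\ref{weckstlem}) yields, after passing to subsequences,
\[
\normltepsom{E_{n}}^{2}=\scpltom{E_{n}}{\eps E_{n}}\to\scpltom{E}{\eps E}=\normltepsom{E}^{2},
\]
which together with $E_{n}\rightharpoonup E$ in $\ltepsom$ forces $E_{n}\to E$ strongly. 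This packages your energy computation into a single application of the global $\div$-$\rot$-lemma and makes transparent why the weighted compactness follows ``for free'' from the unweighted one, as the paper asserts. Either way, your handling of the crucial upgrade step for $\na u_{n}$ is clean and correct.
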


Note that by interchanging the boundary conditions and $\eps$, $\mu$
the latter two compact embeddings are equal.
A proof can be found in \cite[Theorem 4.7]{bauerpaulyschomburgmcpweaklip}.
Indeed, Weck's selection theorems are independent of the material law tensors $\eps$ or $\mu$.
Choosing the pair $(\Az,\Ao)$ we get by Theorem \ref{gen-div-rot-lem} the following:

\begin{theo}[global $\div\eps$-$\mu^{-1}\rot$-lemma]
\mylabel{diveps-murot-lem}
Let $\rcgatom\cap\eps^{-1}\dcganom\cptemb\ltom$ be compact.
Moreover, let $(E_{n})\subset\rcgatom$ and $(H_{n})\subset\eps^{-1}\dcganom$ 
be two sequences bounded in $\rom$ and $\eps^{-1}\dom$, respectively.
Then there exist $E\in\rcgatom$ and $H\in\eps^{-1}\dcganom$ as well as subsequences, again denoted by $(E_{n})$ and $(H_{n})$, 
such that
\begin{itemize}
\item
$E_{n}\wto E$ in $\rcgatom$,
\item
$H_{n}\wto H$ in $\eps^{-1}\dcganom$,
\item
$\scp{E_{n}}{H_{n}}_{\ltepsom}\to\scp{E}{H}_{\ltepsom}$.
\end{itemize}
\end{theo}

\begin{rem}
\mylabel{diveps-murot-lem-rem}
We note:
\begin{itemize}
\item[\bf(i)]
Considering $(E_{n})$ and $(\eps H_{n})$ shows that Theorem \ref{diveps-murot-lem} 
is equivalent to the global $\div$-$\rot$-lemma Theorem \ref{div-rot-lem}.
\item[\bf(ii)]
Theorem \ref{diveps-murot-lem} has a corresponding local version similar to 
the local $\div$-$\rot$-lemma Corollary \ref{div-rot-lem-loc} and Remark \ref{div-rot-rem}, 
which holds with no regularity or boundedness assumptions on $\om$.
\end{itemize}
\end{rem}

The generalization given in Theorem \ref{gen-div-rot-lem-gen-theo} reads as follows.

\begin{theo}[generalized/distributional global $\div\eps$-$\mu^{-1}\rot$-lemma]
\mylabel{diveps-murot-lem-gen}
Let $\na\hocgatom$ and $\rot\rcgatom$ be closed and let the Dirichlet-Neumann fields
$\rcgatzom\cap\eps^{-1}\dcganzom$ be finite-dimensional.
Moreover, let $(E_{n}),(H_{n})\subset\ltepsom$ be two bounded sequences such that
\begin{itemize}
\item
$(\widetilde{\mu^{-1}\rotc_{\gat}}\,E_{n})$ is relatively compact in $\rcganom'$,
\item
$(\widetilde{\divc_{\gan}\eps}\,H_{n})$ is relatively compact in $\hocgatom'$.
\end{itemize}
Then there exist $E,H\in\ltepsom$ as well as subsequences, again denoted by $(E_{n})$ and $(H_{n})$, 
such that
\begin{itemize}
\item
$E_{n}\wto E$ in $\ltepsom$,
\item
$H_{n}\wto H$ in $\ltepsom$,
\item
$\scp{E_{n}}{H_{n}}_{\ltepsom}\to\scp{E}{H}_{\ltepsom}$.
\end{itemize}
\end{theo}

\begin{rem}
\mylabel{diveps-murot-lem-remtwo}
We emphasize:
\begin{itemize}
\item[\bf(i)]
By Lemma \ref{weckstepsmulem} and Lemma \ref{moeppiassequirem}, 
Theorem \ref{diveps-murot-lem} and Theorem \ref{diveps-murot-lem-gen} hold
for weak Lipschitz domains $\om\subset\rt$ with weak Lipschitz interfaces.
\item[\bf(ii)]
Choosing the pair $(\Ao,\At)$ we get by Theorem \ref{gen-div-rot-lem}
a variant of Theorem \ref{diveps-murot-lem}, shortly stating, that for bounded sequences 
$(E_{n})\subset\mu^{-1}\dcgatom$ and $(H_{n})\subset\rcganom$ it holds (after picking subsequences)
$\scp{E_{n}}{H_{n}}_{\ltmuom}\to\scp{E}{H}_{\ltmuom}$.
Similarly, we get a variant of Theorem \ref{diveps-murot-lem-gen}.
\end{itemize}
\end{rem}

\subsubsection{The Classical $\div$-$\rot$-Lemma}

The classical $\div$-$\rot$-lemma (or $\div$-$\curl$-lemma)
by Murat \cite{murat1978} and Tartar \cite{tartar1979}
reads as a slightly weaker version of Corollary \ref{introcortwo} (local $\div$-$\curl$-lemma) 
from the introduction and uses only the standard dual space
$$\hmoom:=\hocom'.$$

\begin{theo}[classical $\div$-$\rot$-lemma]
\mylabel{class-div-rot-lem}
Let $\om\subset\rt$ be an open set and
let $(E_{n}),(H_{n})\subset\ltom$ be two sequences bounded in $\ltom$ such that
both $(\widetilde{\rot}\,E_{n})$ and $(\widetilde{\div}\,H_{n})$ are relatively compact in $\hmoom$.
Then there exist $E,H\in\ltom$ as well as subsequences, again denoted by $(E_{n})$ and $(H_{n})$, 
such that the sequence of scalar products $(E_{n}\cdot H_{n})$ converges in the sense of distributions, i.e.,
$$\forall\,\varphi\in\cicom\qquad
\int_{\om}\varphi\,(E_{n}\cdot H_{n})
\to\int_{\om}\varphi\,(E\cdot H).$$
\end{theo}

Here, we recall the linear extensions of $\A$ and $\As$ (tilde-operators) from Section \ref{sec:moregen} 
$$\widetilde{\A}=(\As)'\R_{\H_{1}}:\H_{1}\to D(\As)',\qquad
\widetilde{\As}=\A'\R_{\H_{2}}:\H_{2}\to D(\A)'$$
and consider the bounded linear operators and their adjoints
\begin{align*}
\gradc:\hocom&\To\ltom,
&
-\widetilde{\div}=\gradc{}'\R:\ltom&\To\hocom'=\hmoom,\\
\rotc:\rcom&\To\ltom,
&
\widetilde{\rot}=\rotc{}'\R:\ltom&\To\rcom',\\
\divc:\dcom&\To\ltom,
&
-\widetilde{\grad}=\divc{}'\R:\ltom&\To\dcom',\\
\grad:\hoom&\To\ltom,
&
-\widetilde{\divc}=\grad'\R:\ltom&\To\hoom'=:\hmocom,\\
\rot:\rom&\To\ltom,
&
\widetilde{\rotc}=\rot'\R:\ltom&\To\rom',\\
\div:\dom&\To\ltom,
&
-\widetilde{\gradc}=\div'\R:\ltom&\To\dom',
\end{align*}
where $\R:=\R_{\ltom}:\ltom\to\ltom'$ denotes the (scalar or vector valued) Riesz isomorphism of $\ltom$.
Note that the embeddings
\begin{align*}
\rcom',\dcom'&\subset\hocom'=\hmoom,\\
\rom',\dom'&\subset\hoom'=\hmocom,
&
\hmocom=\hoom'\subset\hocom'=\hmoom
\end{align*}
justify the formulations in Theorem \ref{class-div-rot-lem}.

A typical application of Theorem \ref{class-div-rot-lem} 
in homogenization of partial differential equations is given by the following problem:
Let $(u_{n})\subset\hocom$ be the sequence of unique solutions of the Dirichlet-Laplace problems
$$-\widetilde{\div}\,\Theta_{n}\gradc u_{n}=f\in\hmoom,$$
with some tensor (matrix) fields $\Theta_{n}$ having appropriate properties.
Note that for all $\varphi\in\hocom$ we have the variational formulation
$$f(\varphi)
=\gradc{}'\R\Theta_{n}\gradc u_{n}(\varphi)
=\R\Theta_{n}\gradc u_{n}(\gradc\varphi)
=\scpltom{\gradc\varphi}{\Theta_{n}\gradc u_{n}}.$$
Setting
$$E_{n}:=\gradc u_{n}\in\rczom=N(\Ao)\subset\ltom,\qquad
H_{n}:=\Theta_{n}E_{n}\in\ltom$$
we see
$$\widetilde{\rot}\,E_{n}=\rot E_{n}=0\in\hmoom,\qquad
\widetilde{\div}\,H_{n}=-f\in\hmoom$$
and thus both $(\widetilde{\rot}\,E_{n})$ and $(\widetilde{\div}\,H_{n})$ 
are trivially relatively compact in $\hmoom$ as they are even constant.
Hence Theorem \ref{class-div-rot-lem} yields for all $\varphi\in\cicom$ the convergence of
$$\int_{\om}\varphi\,(E_{n}\cdot H_{n})
=\int_{\om}\varphi\,(\gradc u_{n}\cdot\Theta_{n}\gradc u_{n}).$$
Let us conclude that in view of Theorem \ref{diveps-murot-lem-gen} ($\eps=\mu=\id$)
the proper assumptions for $(E_{n}),(H_{n})\subset\ltom$ in Theorem \ref{class-div-rot-lem}
are given either by (Dirichlet-Laplace)
\begin{itemize}
\item
$(\widetilde{\rotc}\,E_{n})$ 
is relatively compact in $\rom'$,
\item
$(\widetilde{\div}\,H_{n})$ 
is relatively compact in $\hocom'=\hmoom$,
\end{itemize}
or (Neumann-Laplace)
\begin{itemize}
\item
$(\widetilde{\rot}\,E_{n})$ 
is relatively compact in $\rcom'$,
\item
$(\widetilde{\divc}\,H_{n})$ 
is relatively compact in $\hoom'=\hmocom$,
\end{itemize}
additionally to the closedness of
the ranges $\gradc\hocom$, $\grad\hoom$ and $\rotc\rcom$, $\rot\rom$ 
as well as the finite dimension of the Dirichlet fields $\rczom\cap\dzom$
and the Neumann fields $\rzom\cap\dc_{0}(\om)$,
which is a topological property of the underlying domain $\om$,
see \cite{picardharmdiff,picardpotential,picardboundaryelectro}.
Note that Theorem \ref{diveps-murot-lem-gen} implies the stronger convergence
$$\int_{\om}E_{n}\cdot H_{n}
=\scp{E_{n}}{H_{n}}_{\ltom}
\to\scp{E}{H}_{\ltom}.$$

\begin{rem}
\mylabel{gen-div-rot-lem-rem-3}
Let $\om\subset\rt$ be a bounded strong Lipschitz domain with trivial topology. Then
\begin{align*}
\rcom'
=\mathsf{D}^{-1}(\om)
&:=\set{F\in\hmoom}{\widehat{\div}\,F\in\hmoom},\\
\dcom'
=\mathsf{R}^{-1}(\om)
&:=\set{F\in\hmoom}{\widehat{\rot}\,F\in\hmoom}
\end{align*}
hold with equivalent norms,
see \cite{paulyzulehner2018b} or for the two-dimensional analog
\cite{braessbook}.
We conjecture that the duals of $\rom$ and $\dom$ are given by
\begin{align*}
\rom'=\mathring{\mathsf{D}}^{-1}(\om)
&:=\set{F\in\hmocom}{\widehat{\divc}\,F\in\hmocom},\\
\dom'=\mathring{\mathsf{R}}^{-1}(\om)
&:=\set{F\in\hmocom}{\widehat{\rotc}\,F\in\hmocom}
\end{align*}
with equivalent norms.
Here, $\widehat{\div}$ and $\widehat{\rot}$ act as operators from $\hmoom$ to $\hmtom$
and $\widehat{\divc}$ and $\widehat{\rotc}$ act as operators from $\hmocom$ to $\hmtcom$.
\end{rem}

We observe the following.

\begin{lem}
\mylabel{gen-div-rot-lem-rem-3-lem}
Let the assertions in Remark \ref{gen-div-rot-lem-rem-3} hold. 
Then for $E\in\ltom$ and $(E_{n})\subset\ltom$ it holds:
\begin{itemize}
\item[\bf(i)]
$\widehat{\div}\,\widetilde{\rot}\,E=0$
\item[\bf(i')]
$\widetilde{\rot}\,E\in\rcom'\qequi\widetilde{\rot}\,E\in\hmoom$
\item[\bf(i'')]
$(\widetilde{\rot}\,E_{n})$ relatively compact in $\rcom'$
$\qequi$
$(\widetilde{\rot}\,E_{n})$ relatively compact in $\hmoom$
\item[\bf(ii)]
$\widehat{\divc}\,\widetilde{\rotc}\,E=0$
\item[\bf(ii')]
$\widetilde{\rotc}\,E\in\rom'\qequi\widetilde{\rotc}\,E\in\hmocom$
\item[\bf(ii'')]
$(\widetilde{\rotc}\,E_{n})$ relatively compact in $\rom'$
$\qequi$
$(\widetilde{\rotc}\,E_{n})$ relatively compact in $\hmocom$
\end{itemize}
\end{lem}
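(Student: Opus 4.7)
My plan is to prove (i) first as a distributional version of the classical identity $\div\rot=0$, and then to derive (i'), (i''), (ii), (ii'), (ii'') as immediate consequences of (i), (ii) and the norm-equivalence statements built into Remark \ref{gen-div-rot-lem-rem-3}. The computations are short; the real work is bookkeeping across the various dual spaces.

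For (i), I would unfold the definitions: for a test function $\varphi\in\htc(\om)$, so that $\grad\varphi\in\hoc(\om)=\hocom$ and the pairing $\widetilde{\rot}E(\grad\varphi)$ is defined, one obtains
\[
\widehat{\div}\widetilde{\rot}E(\varphi)
=-\widetilde{\rot}E(\grad\varphi)
=-\scpltom{\rot\grad\varphi}{E}
=0,
\]
the last equality by density from the identity $\rot\grad=0$ on $\cic(\om)$. The identity in (ii) follows from the same computation, with the test $\varphi$ now taken so that $\grad\varphi\in\hoom\subset\rom$ and the pairing with $\widetilde{\rotc}E\in\rom'$ makes sense.

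For (i'), note that $E\in\ltom$ already forces $\widetilde{\rot}E\in\hmoom$ by the very definition of $\widetilde{\rot}$, so the right-hand side of the equivalence is trivially true. Combining this with (i) and the characterization $\rcom'=\{F\in\hmoom:\widehat{\div}F\in\hmoom\}$ from Remark \ref{gen-div-rot-lem-rem-3} puts $\widetilde{\rot}E\in\rcom'$ as well; both implications are essentially trivial, and the real content is the norm equivalence on the subspace of $\hmoom$-elements with vanishing $\widehat{\div}$. For (i''), the implication from $\rcom'$-compactness to $\hmoom$-compactness is immediate from the continuous inclusion $\rcom'\hookrightarrow\hmoom$ (dual to the dense inclusion $\hocom\hookrightarrow\rcom$). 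For the converse, I would extract a subsequence $\widetilde{\rot}E_{n_k}\to F$ in $\hmoom$; by continuity of $\widehat{\div}:\hmoom\to\hmtom$ and (i), the limit satisfies $\widehat{\div}F=0$, hence $F\in\rcom'$, and the norm equivalence from Remark \ref{gen-div-rot-lem-rem-3} upgrades
\[
|\widetilde{\rot}E_{n_k}-F|_{\rcom'}\lesssim|\widetilde{\rot}E_{n_k}-F|_{\hmoom}+|\widehat{\div}(\widetilde{\rot}E_{n_k}-F)|_{\hmoom}=|\widetilde{\rot}E_{n_k}-F|_{\hmoom}\to0.
\]
The statements (ii'), (ii'') are proved in exactly the same way, with $\hmoom$ replaced by $\hmocom$, $\rcom'$ by $\rom'$, $\hocom\hookrightarrow\rcom$ by $\hoom\hookrightarrow\rom$, and $\widehat{\div}$ by $\widehat{\divc}$.

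The hard part is not a conceptual obstacle but careful bookkeeping: keeping track of which test-function space each of $\widehat{\div}$, $\widehat{\divc}$, $\widetilde{\rot}$, $\widetilde{\rotc}$ acts on in each instance, correctly identifying $\rcom'$ (respectively $\rom'$) with its canonical copy inside $\hmoom$ (respectively $\hmocom$) via restriction, and trusting the nontrivial norm-equivalence assertion of Remark \ref{gen-div-rot-lem-rem-3}, which is the substantive input feeding the entire argument.
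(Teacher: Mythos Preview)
Your proof is correct and follows essentially the same approach as the paper: compute $\widehat{\div}\,\widetilde{\rot}\,E=0$ by testing against $\varphi\in\htc(\om)$ and using $\rot\na=0$, then invoke the norm equivalence of Remark \ref{gen-div-rot-lem-rem-3} to deduce (i'), (i''). The paper's own proof is one line --- it performs the same pairing computation for (i) and then simply says ``which shows (i), (i'), (i'') by Remark \ref{gen-div-rot-lem-rem-3}'' --- whereas you have spelled out explicitly how the norm equivalence upgrades $\hmoom$-convergence to $\rcom'$-convergence in (i''), which is helpful but not a different idea.
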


\begin{proof}
For $F:=\widetilde{\rot}\,E\in\rcom'\subset\hmoom$ 
we have $\widehat{\div}\,F=0\in\hmoom$ as for all $\varphi\in\htc(\om)$
$$-\widehat{\div}\,\widetilde{\rot}\,E(\varphi)
=\rotc{}'\R\,E(\na\varphi)
=\R\,E(\rot\na\varphi)
=0,$$
which shows (i), (i'), (i'') by Remark \ref{gen-div-rot-lem-rem-3}.
Analogously we see (ii), (ii'), (ii'').
\end{proof}

Finally, we obtain a refined version of Theorem \ref{diveps-murot-lem-gen}
in the case of full boundary conditions, compare to Theorem \ref{class-div-rot-lem}.

\begin{theo}[improved classical $\div$-$\rot$-lemma]
\mylabel{class-div-rot-lem-imp}
Let $\om\subset\rt$ be a bounded strong Lipschitz domain with trivial topology.
Moreover, let $(E_{n}),(H_{n})\subset\ltom$ be two bounded sequences such that either
\begin{itemize}
\item
$(\widetilde{\rotc}\,E_{n})$ is relatively compact in $\hmocom$,
\item
$(\widetilde{\div}\,H_{n})$ is relatively compact in $\hmoom$
\end{itemize}
or
\begin{itemize}
\item
$(\widetilde{\rot}\,E_{n})$ is relatively compact in $\hmoom$,
\item
$(\widetilde{\divc}\,H_{n})$ is relatively compact in $\hmocom$.
\end{itemize}
Then there exist $E,H\in\ltom$ as well as subsequences, again denoted by $(E_{n})$ and $(H_{n})$, 
such that
\begin{itemize}
\item
$E_{n}\wto E$ in $\ltom$,
\item
$H_{n}\wto H$ in $\ltom$,
\item
$\scp{E_{n}}{H_{n}}_{\ltom}\to\scp{E}{H}_{\ltom}$.
\end{itemize}
\end{theo}

We emphasize that the assumptions on $\om$ in the latter theorem imply
that $\gradc\hocom$, $\grad\hoom$, $\rotc\rcom$, $\rot\rom$ are closed
and that the Dirichlet fields $\rczom\cap\dzom$
and the Neumann fields $\rzom\cap\dc_{0}(\om)$ are finite-dimensional,
even trivial.

A more detailed discussion with nice results
on the connections to the classical $\div$-$\rot$-lemma
can be found in \cite{waurick2018a}.

\subsection{Generalized Electro-Magnetics}

Let $\om\subset\rN$ or let $\om$ even be a smooth Riemannian manifold 
with Lipschitz boundary $\ga$ (Lipschitz submanifold)
having (interface) Lipschitz submanifolds $\gat$, $\gan$. 
Using the calculus of alternating differential $q$-forms, $q=0,\dots,N$,
we define the exterior derivative $\ed$ and co-derivative $\cd=\pm*\ed*$ 
in the weak sense by
\begin{align*}
\dgen{}{q}{}(\om):=\setb{E\in\lgen{}{2,q}{}(\om)}{\ed E\in\lgen{}{2,q+1}{}(\om)},\quad 
\Delta^{q+1}(\om):=\setb{H\in\lgen{}{2,q+1}{}(\om)}{\cd H\in\lgen{}{2,q}{}(\om)},
\end{align*}
where $\lgen{}{2,q}{}(\om)$ denotes the standard Lebesgue space of square integrable $q$-forms.
To introduce boundary conditions we define 
\begin{align*}
\mathring\ed_{\gat}^{q}:
\dcgat^{q}(\om):=\overline{\cicqom{q}{\gat}}^{\dgen{}{q}{}(\om)}
\subset\lgen{}{2,q}{}(\om)&\To\lgen{}{2,q+1}{}(\om);
&
E&\mapsto\ed E
\intertext{as closure of the classical exterior derivative $\ed$ acting on test $q$-forms.
$\mathring\ed_{\gat}^{q}$ is an unbounded, densely defined, and closed linear operator with adjoint}
(\mathring\ed_{\gat}^{q})^{*}=-\mathring\cd_{\gan}^{q+1}:
\mathring\Delta^{q+1}_{\gan}(\om):=\overline{\cicqom{q+1}{\gan}}^{\Delta^{q+1}(\om)}
\subset\lgen{}{2,q+1}{}(\om)&\To\lgen{}{2,q}{}(\om);
&
H&\mapsto-\cd H.
\end{align*}
Let us introduce
$$\A_{0}:=\mathring\ed_{\gat}^{q-1},\quad
\A_{1}:=\mathring\ed_{\gat}^{q},\qquad
\A_{0}^{*}=-\mathring\cd_{\gan}^{q},\quad
\A_{1}^{*}=-\mathring\cd_{\gan}^{q+1}.$$
The complex properties hold as, e.g.,
\begin{align*}
R(\Az)=\mathring\ed_{\gat}^{q-1}\dcgat^{q-1}(\om)&\subset\dc_{\gat,0}^{q}(\om)=N(\Ao),
&
R(\Aos)=\mathring\cd_{\gan}^{q+1}\mathring\Delta^{q+1}_{\gan}(\om)\subset\mathring\Delta^{q}_{\gan,0}(\om)=N(\Azs)
\end{align*}
by the classical properties $\cd\cd=\pm*\ed\ed*=0$.
Hence, the sequences \eqref{complexdiagAzot} read
$$\begin{CD}
\dcgat^{q-1}(\om)\subset\lgen{}{2,q-1}{}(\om) @> \A_{0}=\mathring\ed_{\gat}^{q-1} >>
\dcgat^{q}(\om)\subset\lgen{}{2,q}{}(\om) @> \A_{1}=\mathring\ed_{\gat}^{q} >>
\lgen{}{2,q+1}{}(\om),
\end{CD}$$
$$\begin{CD}
\lgen{}{2,q-1}{}(\om) @< \A_{0}^{*}=-\mathring\cd_{\gan}^{q} <<
\mathring\Delta^{q}_{\gan}(\om)\subset\lgen{}{2,q}{}(\om) @< \A_{1}^{*}=-\mathring\cd_{\gan}^{q+1} <<
\mathring\Delta^{q+1}_{\gan}(\om)\subset\lgen{}{2,q+1}{}(\om),
\end{CD}$$
which are the well-known Hilbert complexes for generalized electro-magnetics, i.e.,
the de Rham complexes. 
Typical equations arising from the de Rham complex are
systems of generalised electro-magneto statics, e.g.,
\begin{align*}
\A_{1}E=\mathring\ed_{\gat}^{q}E&=F,\\
\A_{0}^{*}E=-\mathring\cd_{\gan}^{q}E&=G,
\end{align*}
or systems of generalized Dirichlet-Neumann Laplacians, e.g.,
\begin{align*}
\A_{1}^{*}\A_{1}E=-\mathring\cd_{\gan}^{q+1}\mathring\ed_{\gat}^{q}E&=F,
&
(\A_{1}^{*}\A_{1}+\A_{0}\A_{0}^{*})E
=-(\mathring\cd_{\gan}^{q+1}\mathring\ed_{\gat}^{q}+\mathring\ed_{\gan}^{q-1}\mathring\cd_{\gat}^{q})E&=F,\\
\A_{0}^{*}E=-\mathring\cd_{\gan}^{q}E&=G.
\end{align*}
The crucial embeddings \eqref{crucialemb}
are compact by (a generalization) Weck's selection theorem, compare to Lemma \ref{weckstlem}.

\begin{lem}[Weck's selection theorem]
\label{weckstqlem}
Let $\om\subset\rN$ be a weak Lipschitz domain with weak Lipschitz interfaces
or even a Riemannian manifold with Lipschitz boundary and Lipschitz interfaces.
Then for all $q$ the embeddings
$$D(\Ao)\cap D(\Azs)=\dcgat^{q}(\om)\cap\mathring\Delta^{q}_{\gan}(\om)\cptemb\ltom$$
are compact.
\end{lem}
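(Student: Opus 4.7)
The plan is to mimic the strategy used for the three-dimensional vector case in Lemma~\ref{weckstlem} and reduce the $q$-form statement to standard Rellich-type compactness results via localization, flattening, and reflection.

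First I would reduce the Riemannian manifold case to the Euclidean one. Since $\om$ admits a finite atlas of bi-Lipschitz coordinate charts, and pullbacks of square-integrable $q$-forms under bi-Lipschitz maps are bounded on $\lgen{}{2,q}{}$ and commute (up to lower-order terms) with the weak exterior derivative and co-derivative, the two spaces $\dcgat^{q}(\om)$ and $\mathring\Delta^{q}_{\gan}(\om)$ transform into equivalent spaces in local charts. Hence a partition of unity argument reduces the claim to the case of a bounded weak Lipschitz domain $\om\subset\rN$ with weak Lipschitz interfaces $\gat$ and $\gan$. Moreover, the Hodge star $*$ is an $\lgen{}{2,q}{}$-isometry onto $\lgen{}{2,N-q}{}$ which interchanges $\ed$ with $\pm\cd$ and swaps the tangential and normal boundary conditions, so it suffices to establish compactness for a single value of~$q$ up to exchanging $\gat$ and $\gan$.

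Next I would cover $\ovl{\om}$ by finitely many open sets, each of which is either contained in $\om$, meets $\gat$ only, meets $\gan$ only, or meets both (near the interface $\ovl{\gat}\cap\ovl{\gan}$), and use a subordinate partition of unity $(\varphi_i)$ to localize a bounded sequence $(E_n)\subset\dcgat^{q}(\om)\cap\mathring\Delta^{q}_{\gan}(\om)$. For each $i$ the product rule shows that $\varphi_i E_n$ remains bounded in both $\dgen{}{q}{}$ and $\Delta^{q}$ and inherits the boundary conditions on the relevant piece of $\p\om$. In each local patch one flattens the boundary via a bi-Lipschitz chart and extends the form across the flat piece by reflection with the sign convention that preserves the relevant trace (tangential part reflected oddly across $\gat$, normal part reflected oddly across $\gan$). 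After reflection, the form has no boundary conditions and lies in $\dgen{}{q}{}(U)\cap\Delta^{q}(U)$ on a slightly larger set $U$, where componentwise elliptic regularity together with the classical Rellich theorem yields an $\lgen{}{2,q}{}$-convergent subsequence. Summing over the finite cover produces a convergent subsequence of $(E_n)$ in $\lgen{}{2,q}{}(\om)$.

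The main obstacle is not the overall scheme, which is standard, but rather two technical points which already caused the difficulties in the 3D case treated in \cite{bauerpaulyschomburgmcpweaklip}: the genuinely weak (rather than strong) Lipschitz regularity of $\om$, $\gat$, $\gan$, under which the flattening map is not necessarily bi-Lipschitz in the usual sense; and the behaviour at the interface $\ovl{\gat}\cap\ovl{\gan}$, where the tangential and normal boundary conditions must be enforced simultaneously and neither reflection alone suffices. Both of these are handled in \cite{bauerpaulyschomburgmcpweaklip} (for $q=1,2$ in $\rt$) via a more refined extension operator together with the weak equivalence of strong and weak boundary conditions analogous to \eqref{defschwach}, and the same construction applies to $q$-forms in $\rN$ with only notational changes, since the pullback calculus for differential forms is compatible with Lipschitz transformations.
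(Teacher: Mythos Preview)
The paper does not actually prove this lemma: immediately after the statement it simply writes ``A proof can be found in \cite[Theorem 4.9]{bauerpaulyschomburg2017}, see also the fundamental papers of Weck \cite{weckmax} (strong Lipschitz) and Picard \cite{picardcomimb} (weak Lipschitz) for full boundary conditions.'' So there is no proof in the paper to compare against; your sketch is in fact more detailed than what the paper provides, and it outlines precisely the localization--flattening--reflection strategy that the cited references carry out.

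One technical point in your sketch deserves correction. You write that pullbacks under bi-Lipschitz maps ``commute (up to lower-order terms) with the weak exterior derivative and co-derivative.'' The exterior derivative commutes \emph{exactly} with pullbacks (this is the fundamental naturality of $\ed$), but the co-derivative $\cd=\pm*\ed*$ does \emph{not}, because the Hodge star depends on the metric and a Lipschitz change of coordinates distorts it by a bounded $\liom$-tensor. The correct way to phrase this is that pullback maps $\dcgat^{q}$ isomorphically onto $\dcgat^{q}$ in the new coordinates, while $\mathring\Delta^{q}_{\gan}$ is mapped onto a space of the form $\mathring\Delta^{q}_{\gan,\eps}$ with a transformed (merely $\liom$, symmetric, uniformly positive) coefficient $\eps$; the compact embedding is insensitive to such coefficients, which is exactly why the paper remarks that ``Weck's selection theorems are independent of possible material law tensors $\eps$ or $\mu$.'' With that adjustment your reduction to local Euclidean models is sound, and the remaining difficulties you flag (genuine weak Lipschitz charts, the interface $\ovl{\gat}\cap\ovl{\gan}$) are indeed the substantive content of \cite{bauerpaulyschomburgmcpweaklip} and \cite{bauerpaulyschomburg2017}.
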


A proof can be found in \cite[Theorem 4.9]{bauerpaulyschomburg2017},
see also the fundamental papers of Weck \cite{weckmax} (strong Lipschitz) 
and Picard \cite{picardcomimb} (weak Lipschitz) for full boundary conditions.
Again, Weck's selection theorems are independent of possible material law tensors $\eps$ or $\mu$.
Theorem \ref{gen-div-rot-lem} shows the following result:

\begin{theo}[global $\cd$-$\ed$-lemma]
\mylabel{div-rot-lem-qforms}
Let the embedding $\dcgat^{q}(\om)\cap\mathring\Delta^{q}_{\gan}(\om)\cptemb\ltom$ be compact.
Moreover, let $(E_{n})\subset\dcgat^{q}(\om)$ and $(H_{n})\subset\mathring\Delta^{q}_{\gan}(\om)$
be two sequences bounded in $\dgen{}{q}{}(\om)$ and $\Delta^{q}(\om)$, respectively.
Then there exist $E\in\dcgat^{q}(\om)$ and $H\in\mathring\Delta^{q}_{\gan}(\om)$ 
as well as subsequences, again denoted by $(E_{n})$ and $(H_{n})$, 
such that 
\begin{itemize}
\item
$E_{n}\wto E$ in $\dcgat^{q}(\om)$,
\item
$H_{n}\wto H$ in $\Delta^{q}_{\gan}(\om)$,
\item
$\scp{E_{n}}{H_{n}}_{\lgen{}{2,q}{}(\om)}\to\scp{E}{H}_{\lgen{}{2,q}{}(\om)}$.
\end{itemize}
\end{theo}

\begin{rem}
\mylabel{div-rot-lem-qforms-rem}
We note:
\begin{itemize}
\item[\bf(i)]
For $N=3$ and $q=1$ (or $q=2$) we obtain by Theorem \ref{div-rot-lem-qforms}
again the global $\div$-$\rot$-lemma Theorem \ref{div-rot-lem}.
\item[\bf(ii)]
For $q=0$ (or $q=N$) 
as well as identifying $\mathring\ed_{\gat}^{0}=\gradc_{\gat}$ 
and $\mathring\Delta^{0}_{\gan}(\om)=0$
(or $\mathring\ed_{\gat}^{N}=0$ 
and $\mathring\Delta^{N}_{\gan}(\om)=\gradc_{\gan}$)
we get by Theorem \ref{div-rot-lem-qforms} 
the following trivial (by Rellich's selection theorem) result:
For all bounded sequences 
$(u_{n})\subset\hocgatom$ and $(v_{n})\subset\ltom$
there exist $u\in\hocgatom$ and $v\in\ltom$
as well as subsequences, again denoted by $(u_{n})$ and $(v_{n})$, 
such that $(u_{n})$ and $(v_{n})$ converge weakly in $\hocgatom$ (or $\ltom$)
to $u$ and $v$, respectively, together with the convergence of the inner products
$\scpltom{u_{n}}{v_{n}}\to\scpltom{u}{v}$.
\item[\bf(iii)]
Theorem \ref{div-rot-lem-qforms} has a corresponding local version similar to 
the local $\div$-$\rot$-lemma Corollary \ref{div-rot-lem-loc} and Remark \ref{div-rot-rem}, 
which holds with no regularity or boundedness assumptions on $\om$.
\item[\bf(iv)]
Material law tensors $\eps$ and $\mu$ different from the identities can be handled as well.
\end{itemize}
\end{rem}

The generalization given in Theorem \ref{gen-div-rot-lem-gen-theo} reads as follows.

\begin{theo}[generalized/distributional global $\cd$-$\ed$-lemma]
\mylabel{div-rot-lem-qforms-gen}
Let $\ed\dcgat^{q-1}(\om)$ and $\ed\dcgat^{q}(\om)$ be closed and let the generalized Dirichlet-Neumann fields
$\mathring\dsymbol^{q}_{\gat,0}(\om)\cap\mathring\Delta^{q}_{\gan,0}(\om)$ be finite-dimensional.
Moreover, let $(E_{n}),(H_{n})\subset\lgen{}{2,q}{}(\om)$ be two bounded sequences such that
\begin{itemize}
\item
$(\widetilde{\mathring\ed_{\gat}^{q}}\,E_{n})$ is relatively compact in $\mathring\Delta^{q+1}_{\gan}(\om)'$,
\item
$(\widetilde{\mathring\cd_{\gan}^{q}}\,H_{n})$ is relatively compact in $\dcgat^{q-1}(\om)'$.
\end{itemize}
Then there exist $E,H\in\lgen{}{2,q}{}(\om)$ as well as subsequences, again denoted by $(E_{n})$ and $(H_{n})$, 
such that
\begin{itemize}
\item
$E_{n}\wto E$ in $\lgen{}{2,q}{}(\om)$,
\item
$H_{n}\wto H$ in $\lgen{}{2,q}{}(\om)$,
\item
$\scp{E_{n}}{H_{n}}_{\lgen{}{2,q}{}(\om)}\to\scp{E}{H}_{\lgen{}{2,q}{}(\om)}$.
\end{itemize}
\end{theo}

\begin{rem}
\mylabel{div-rot-lem-qforms-remtwo}
By Lemma \ref{weckstqlem} and Lemma \ref{moeppiassequirem}, 
Theorem \ref{div-rot-lem-qforms} and Theorem \ref{div-rot-lem-qforms-gen} hold
for weak Lipschitz domains $\om\subset\rN$ with weak Lipschitz interfaces
or even for Riemannian manifolds $\om$.
\end{rem}

\subsection{Biharmonic Equation, General Relativity, and Gravitational Waves}

Let $\om\subset\rt$. We introduce
symmetric and deviatoric (trace-free) square integrable tensor fields
in $\lt(\om;\bbS)$ and $\lt(\om;\bbT)$ and 
as closures of the Hessian $\na\na$, and $\Rot$, $\Div$ (row-wise $\rot$, $\div$),
applied to test functions or test tensor fields, the linear operators
\begin{align*}
\Az:=\mathring{\na\na}:\htc(\om):=\overline{\cicom}^{\htom}
\subset\ltom&\To\lt(\om;\bbS);
&
u&\mapsto\na\na u,\\
\Ao:=\mathring{\Rot}_{\bbS}:\rc(\om;\bbS):=\overline{\cic(\om;\bbS)}^{\rom}
\subset\lt(\om;\bbS)&\To\lt(\om;\bbT);
&
S&\mapsto\Rot S,\\
\At:=\mathring{\Div}_{\bbT}:\dc(\om;\bbT):=\overline{\cic(\om;\bbT)}^{\dom}
\subset\lt(\om;\bbT)&\To\ltom;
&
T&\mapsto\Div T.
\intertext{\normalsize $\Az$, $\Ao$, and $\At$ are unbounded, densely defined, and closed linear operators with adjoints}
\Azs=(\mathring{\na\na})^{*}=\div\Div_{\bbS}:
\d\d(\om;\bbS)
\subset\lt(\om;\bbS)&\To\ltom;
&
S&\mapsto\div\Div S,\\
\Aos=\mathring{\Rot}_{\bbS}^{*}=\sym\Rot_{\bbT}:
\rsymbol_{\sym}(\om;\bbT)
\subset\lt(\om;\bbT)&\To\lt(\om;\bbS);
&
T&\mapsto\sym\Rot T,\\
\Ats=\mathring{\Div}_{\bbT}^{*}=-\dev\grad:
\hoom
\subset\ltom&\To\lt(\om;\bbT);
&
v&\mapsto-\dev\na v,
\end{align*}
\normalsize
where $\hoom$, $\htom$ denote the usual Sobolev spaces and
\begin{align*}
\rom&:=\setb{S\in\ltom}{\Rot S\in\ltom},
&
\r(\om;\bbS)&:=\rom\cap\lt(\om;\bbS),\\
\dom&:=\setb{T\in\ltom}{\Div T\in\ltom},
&
\d(\om;\bbT)&:=\dom\cap\lt(\om;\bbT),\\
\d\dom&:=\setb{S\in\ltom}{\div\Div S\in\ltom},
&
\d\d(\om;\bbS)&:=\d\d(\om)\cap\lt(\om;\bbS),\\
\rsymbol_{\sym}(\om)&:=\setb{T\in\ltom}{\sym\Rot T\in\ltom},
&
\rsymbol_{\sym}(\om;\bbT)&:=\rsymbol_{\sym}(\om)\cap\lt(\om;\bbT),
\end{align*}
see \cite{paulyzulehnerbiharmonic} for details.
Note that $u$, $v$, and $S$, $T$ are scalar, vector, and tensor (matrix) fields, respectively.
Moreover, for $S\in\r(\om;\bbS)$ it holds $\Rot S\in\lt(\om;\bbT)$.
The complex properties hold as
\begin{align*}
R(\Az)=\mathring{\na\na}\htc(\om)&\subset\rc_{0}(\om;\bbS)=N(\Ao),\\
R(\Aos)=\sym\Rot_{\bbT}\rsymbol_{\sym}(\om;\bbT)&\subset\d\dz(\om;\bbS)=N(\Azs),\\
R(\Ao)=\mathring{\Rot}_{\bbS}\rc(\om;\bbS)&\subset\dc_{0}(\om;\bbT)=N(\At),\\
R(\Ats)=\dev\grad\hoom&\subset\rsymbol_{\sym,0}(\om;\bbT)=N(\Aos),
\end{align*}
see again \cite{paulyzulehnerbiharmonic}.
The sequences \eqref{complexdiagAzot} read
$$\small\begin{CD}
\htc(\om)\subset\ltom @> \A_{0}=\mathring{\na\na} >>
\rc(\om;\bbS)\subset\lt(\om;\bbS) @> \A_{1}=\mathring{\Rot}_{\bbS} >>
\dc(\om;\bbT)\subset\lt(\om;\bbT) @> \A_{2}=\mathring{\Div}_{\bbT} >>
\ltom,
\end{CD}$$
$$\small\begin{CD}
\ltom @< \A_{0}^{*}=\div\Div_{\bbS} <<
\d\d(\om;\bbS)\subset\lt(\om;\bbS) @< \A_{1}^{*}=\sym\Rot_{\bbT} <<
\rsymbol_{\sym}(\om;\bbT)\subset\lt(\om;\bbT) @< \A_{2}^{*}=-\dev\grad <<
\hoom\subset\ltom.
\end{CD}$$
These are the so-called $\mathrm{Grad}\,\mathrm{grad}$ and $\div\Div$ complexes, appearing, e.g., 
in biharmonic problems or general relativity, see \cite{paulyzulehnerbiharmonic} for details.
Typical equations arising from the $\mathrm{Grad}\,\mathrm{grad}$ complex are
systems of general relativity, e.g.,
\begin{align*}
\A_{1}S=\mathring{\Rot}_{\bbS}S&=F,
&
\A_{2}T=\mathring{\Div}_{\bbT}T&=g,\\
\A_{0}^{*}S=\div\Div_{\bbS}S&=f,
&
\A_{1}^{*}T=\sym\Rot_{\bbT}T&=G,
\end{align*}
or simply biharmonic equations and related second order systems, e.g.,
\begin{align*}
\A_{0}^{*}\A_{0}u=\div\Div_{\bbS}\mathring{\na\na}u&=f,
&
\A_{1}^{*}\A_{1}S=\sym\Rot_{\bbT}\mathring{\Rot}_{\bbS}S&=G,\\
&&
\A_{0}^{*}S=\div\Div_{\bbS}S&=f.
\end{align*}
The crucial embeddings \eqref{crucialemb}
are compact, compare to Lemma \ref{weckstlem}.

\begin{lem}[biharmonic selection theorems]
\label{biharmstlem}
Let $\om\subset\rt$ be a strong Lipschitz domain.
Then the embeddings
\begin{align*}
D(\Ao)\cap D(\Azs)&=\rc(\om;\bbS)\cap\d\d(\om;\bbS)\cptemb\lt(\om;\bbS),\\
D(\At)\cap D(\Aos)&=\dc(\om;\bbT)\cap\rsymbol_{\sym}(\om;\bbT)\cptemb\lt(\om;\bbT),
\end{align*}
are compact.
\end{lem}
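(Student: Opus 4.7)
The strategy I would pursue is precisely the one developed in \cite{paulyzulehnerbiharmonic}: reduce each of the two tensor-valued compact embeddings to Rellich's selection theorem by means of regular (Helmholtz-type) decompositions adapted to the $\mathrm{Grad}\,\mathrm{grad}$ and $\div\Div$ complexes, mirroring how the classical Weck selection theorem of Lemma \ref{weckstlem} is derived from Rellich for the de Rham complex in \cite{bauerpaulyschomburgmcpweaklip}.

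For the first embedding, I would start from a bounded sequence $(S_{n})\subset\rc(\om;\bbS)\cap\d\d(\om;\bbS)$ and decompose each $S_{n}$ orthogonally as $S_{n}=\mathring{\na\na}\,u_{n}+\Sigma_{n}$ with $u_{n}\in\htc(\om)$ and $\Sigma_{n}\in\rc(\om;\bbS)$ lying in the orthogonal complement of $R(\Az)=\mathring{\na\na}\,\htc(\om)$. The closedness of $R(\Az)$, together with the Friedrichs/Poincar\'e type estimate furnished by Lemma \ref{poincarerange}, provides uniform bounds on $(u_{n})$ in the graph norm of $\mathring{\na\na}$ and on $(\Sigma_{n})$ in $\rc(\om;\bbS)$. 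The crucial additional input is a regularity lift for the remainder $\Sigma_{n}$: elements of $\rc(\om;\bbS)$ that are $\ltom$-orthogonal to $R(\Az)$ in fact enjoy $H^{1}$-regularity on strong Lipschitz domains, a fact established in \cite{paulyzulehnerbiharmonic}. Rellich's theorem then gives an $\lt(\om;\bbS)$-convergent subsequence of $(\Sigma_{n})$, while Rellich applied to $(\na u_{n})$ gives an $\ltom$-convergent subsequence of $(\na u_{n})$; the bound on $\div\Div S_{n}$ combined with the convergence of $\Sigma_{n}$ additionally controls $\mathring{\na\na}\,u_{n}$ in the relevant graph norm, producing an $\lt(\om;\bbS)$-convergent subsequence of $(S_{n})$ as required.

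The second embedding $\dc(\om;\bbT)\cap\rsymbol_{\sym}(\om;\bbT)\cptemb\lt(\om;\bbT)$ is handled by an entirely analogous argument one step further along the complex: I would decompose a bounded $(T_{n})$ into $T_{n}=\mathring{\Rot}_{\bbS}\tilde S_{n}+\Theta_{n}$ with $\tilde S_{n}\in\rc(\om;\bbS)$ taken from $R(\Ao)$'s preimage under the reduced operator, and with $\Theta_{n}$ a deviatoric remainder of higher regularity. The first embedding (just established) supplies compactness for the gradient-of-gradient part of $\tilde S_{n}$, the $H^{1}$-regularity lift for $\Theta_{n}$ again yields compactness via Rellich, and the $\Div$-bound on $T_{n}$ propagates compactness to the $\Rot$-part.

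The main obstacle in this plan is not the Helmholtz decompositions or the application of Rellich, which proceed canonically once the functional-analytic framework of Section \ref{gensec} is in place, but rather the construction of the $H^{1}$-regular decompositions for \emph{symmetric} and \emph{deviatoric} tensor fields subject to the homogeneous boundary conditions encoded in $\htc(\om)$, $\rc(\om;\bbS)$, and $\dc(\om;\bbT)$. The coupling of symmetry and trace-freeness with the Lipschitz boundary traces is delicate and forms the technical core of \cite{paulyzulehnerbiharmonic}; once those decompositions are available, the compactness assertions follow along the template of Lemma \ref{weckstlem}.
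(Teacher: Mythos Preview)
The paper itself does not prove this lemma; it simply cites \cite[Lemma~3.22]{paulyzulehnerbiharmonic}. Your proposal correctly points to that reference and to the overall strategy of reducing to Rellich via decompositions adapted to the complex. In that sense you and the paper agree.

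Where your sketch goes wrong is in conflating two different decompositions. You write that you ``decompose each $S_{n}$ \emph{orthogonally} as $S_{n}=\mathring{\na\na}\,u_{n}+\Sigma_{n}$'' and then assert that the orthogonal remainder $\Sigma_{n}\in\rc(\om;\bbS)\cap\d\d_{0}(\om;\bbS)$ enjoys $H^{1}$-regularity on strong Lipschitz domains. This regularity claim is the heart of the matter, and on general (non-convex) strong Lipschitz domains it is \emph{not} available for the orthogonal Helmholtz remainder---just as in the classical de~Rham setting the inclusion $\rcom\cap\dzom\subset\hoom$ fails once reentrant corners are present. What \cite{paulyzulehnerbiharmonic} actually builds (and what the paper alludes to in Remark~\ref{div-rot-lem-biharm-rem} as a ``non-standard Helmholtz type decomposition'', \cite[Lemma~3.21]{paulyzulehnerbiharmonic}) is a \emph{regular}, in general non-orthogonal, splitting in which the $H^{1}$-part is produced constructively via potential operators, not obtained as an $L^{2}$-orthogonal complement. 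Your final paragraph seems to recognise this (``$H^{1}$-regular decompositions \dots\ forms the technical core''), but the body of the argument does not: the orthogonal split you describe and the regular split you need are different objects, and only the latter survives on Lipschitz domains.

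A secondary point: the phrase ``the bound on $\div\Div S_{n}$ combined with the convergence of $\Sigma_{n}$ additionally controls $\mathring{\na\na}\,u_{n}$'' hides a real step. If one insists on the orthogonal split, the mechanism that actually closes is
\[
\norm{\na\na(u_{n}-u_{m})}_{\ltom}^{2}
=\scpltom{u_{n}-u_{m}}{\Delta^{2}(u_{n}-u_{m})}
\leq c\,\norm{u_{n}-u_{m}}_{\ltom},
\]
using $u_{n}\in\htc(\om)$ and $\Delta^{2}u_{n}=\div\Div S_{n}\in\ltom$ bounded; Rellich on $\htc(\om)\cptemb\ltom$ then makes $(\na\na u_{n})$ Cauchy. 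This part is fine once spelled out, but it does not rescue the $\Sigma_{n}$-step: you still need compactness for $\Sigma_{n}$, which is precisely the missing $H^{1}$-regularity that the orthogonal construction does not deliver.
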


A proof can be found in \cite[Lemma 3.22]{paulyzulehnerbiharmonic}.
Again, the biharmonic selection theorems are independent of possible material law tensors $\eps$ or $\mu$.
Choosing the pair $(\Az,\Ao)$ we get by Theorem \ref{gen-div-rot-lem} the following:

\begin{theo}[global $\div\Div$-$\Rot$-$\bbS$-lemma]
\mylabel{div-rot-lem-biharmS}
Let $\rc(\om;\bbS)\cap\d\d(\om;\bbS)\cptemb\lt(\om;\bbS)$ be compact.
Moreover, let $(S_{n})\subset\rc(\om;\bbS)$ and $(T_{n})\subset\d\d(\om;\bbS)$ 
be two sequences bounded in $\rom$ and $\d\dom$, respectively.
Then there exist $S\in\rc(\om;\bbS)$ and $T\in\d\d(\om;\bbS)$ as well as subsequences, 
again denoted by $(S_{n})$ and $(T_{n})$, 
such that
\begin{itemize}
\item
$S_{n}\wto S$ in $\rc(\om;\bbS)$, 
\item
$T_{n}\wto T$ in $\d\d(\om;\bbS)$, 
\item
$\scp{S_{n}}{T_{n}}_{\lt(\om,\bbS)}\to\scp{S}{T}_{\lt(\om,\bbS)}$.
\end{itemize}
\end{theo}

For the pair $(\Ao,\At)$ Theorem \ref{gen-div-rot-lem} implies: 

\begin{theo}[global $\sym\Rot$-$\Div$-$\bbT$-lemma]
\mylabel{div-rot-lem-biharmT}
Let $\dc(\om;\bbT)\cap\rsymbol_{\sym}(\om;\bbT)\cptemb\lt(\om;\bbT)$ be compact.
Moreover, let $(S_{n})\subset\dc(\om;\bbT)$ and $(T_{n})\subset\rsymbol_{\sym}(\om;\bbT)$ 
be two sequences bounded in $\dom$ and $\rsymbol_{\sym}(\om)$, respectively.
Then there exist $S\in\dc(\om;\bbT)$ and $T\in\rsymbol_{\sym}(\om;\bbT)$ as well as subsequences, 
again denoted by $(S_{n})$ and $(T_{n})$, 
such that
\begin{itemize}
\item
$S_{n}\wto S$ in $\dc(\om;\bbT)$, 
\item
$T_{n}\wto T$ in $\rsymbol_{\sym}(\om;\bbT)$, 
\item
$\scp{S_{n}}{T_{n}}_{\lt(\om,\bbT)}\to\scp{S}{T}_{\lt(\om,\bbT)}$.
\end{itemize}
\end{theo}

\begin{rem}
\mylabel{div-rot-lem-biharm-rem}
Material law tensors $\eps$ and $\mu$ different from the identities can be handled as well.
Theorem \ref{div-rot-lem-biharmS} and Theorem \ref{div-rot-lem-biharmT}
have corresponding local versions similar to 
the local $\div$-$\rot$-lemma Corollary \ref{div-rot-lem-loc} and Remark \ref{div-rot-rem}, 
which hold with no regularity or boundedness assumptions on $\om$.
We note that the local version of Theorem \ref{div-rot-lem-biharmS}
is a bit more involved as standard localization techniques 
(multiplication by test functions) fail
due to the second order nature of the Sobolev space $\d\d(\om;\bbS)$.
This additional difficulty can be overcome with the help of
a non-standard Helmholtz type decomposition, see \cite[Lemma 3.21]{paulyzulehnerbiharmonic}
and the proof of \cite[Lemma 3.22]{paulyzulehnerbiharmonic}.
\end{rem}

The generalizations from Theorem \ref{gen-div-rot-lem-gen-theo} read as follows.

\begin{theo}[generalized/distributional global $\div\Div$-$\Rot$-$\bbS$-lemma]
\mylabel{div-rot-lem-biharmS-gen}
Let $\na\na\htc(\om)$ and $\Rot\rc(\om;\bbS)$ be closed and let the generalized Dirichlet-Neumann fields
$\rcz(\om;\bbS)\cap\d\dz(\om;\bbS)$ be finite-dimensional.
Moreover, let $(S_{n}),(T_{n})\subset\lt(\om,\bbS)$ be two bounded sequences such that
\begin{itemize}
\item
$(\widetilde{\mathring{\Rot}_{\bbS}}\,S_{n})$ is relatively compact in $\rsymbol_{\sym}(\om;\bbT)'$,
\item
$(\widetilde{\div\Div_{\bbS}}\,T_{n})$ is relatively compact in $\htc(\om)'=\H^{-2}(\om)$.
\end{itemize}
Then there exist $S,T\in\lt(\om,\bbS)$ as well as subsequences, again denoted by $(S_{n})$ and $(T_{n})$, 
such that
\begin{itemize}
\item
$S_{n}\wto S$ in $\lt(\om,\bbS)$,
\item
$T_{n}\wto T$ in $\lt(\om,\bbS)$,
\item
$\scp{S_{n}}{T_{n}}_{\lt(\om,\bbS)}\to\scp{S}{T}_{\lt(\om,\bbS)}$.
\end{itemize}
\end{theo}

\begin{theo}[generalized/distributional global $\sym\Rot$-$\Div$-$\bbT$-lemma]
\mylabel{div-rot-lem-biharmT-gen}
Let the ranges $\Rot\rc(\om;\bbS)$ and $\Div\dc(\om;\bbT)$ be closed and let the generalized Dirichlet-Neumann fields
$\dc_{0}(\om;\bbT)\cap\rsymbol_{\sym,0}(\om;\bbT)$ be finite-dimensional.
Moreover, let $(S_{n}),(T_{n})\subset\lt(\om,\bbT)$ be two bounded sequences such that
\begin{itemize}
\item
$(\widetilde{\mathring{\Div}_{\bbT}}\,S_{n})$ is relatively compact in $\hoom'=\hmocom$,
\item
$(\widetilde{\sym\Rot_{\bbT}}\,T_{n})$ is relatively compact in $\rc(\om;\bbS)'$.
\end{itemize}
Then there exist $S,T\in\lt(\om,\bbT)$ as well as subsequences, again denoted by $(S_{n})$ and $(T_{n})$, 
such that
\begin{itemize}
\item
$S_{n}\wto S$ in $\lt(\om,\bbT)$,
\item
$T_{n}\wto T$ in $\lt(\om,\bbT)$,
\item
$\scp{S_{n}}{T_{n}}_{\lt(\om,\bbT)}\to\scp{S}{T}_{\lt(\om,\bbT)}$.
\end{itemize}
\end{theo}

\begin{rem}
\mylabel{div-rot-lem-biharm-remtwo}
By Lemma \ref{biharmstlem} and Lemma \ref{moeppiassequirem}, 
Theorem \ref{div-rot-lem-biharmS}, Theorem \ref{div-rot-lem-biharmT}, and 
Theorem \ref{div-rot-lem-biharmS-gen}, Theorem \ref{div-rot-lem-biharmT-gen} 
hold for strong Lipschitz domains $\om\subset\rt$.
\end{rem}

\subsection{Linear Elasticity}

Let $\om\subset\rt$ and let
\begin{align*}
\Az:=\mathring{\sym\grad}:\hocom
\subset\ltom&\To\lt(\om;\bbS);
&
v&\mapsto\sym\na v,\\
\Ao:=\mathring{\Rot\Rot}{}^{\top}_{\bbS}:
\mathring{\r\r}{}^{\top}(\om;\bbS):=\overline{\cic(\om;\bbS)}^{\r\r{}^{\top}(\om)}
\subset\lt(\om;\bbS)&\To\lt(\om;\bbS);
&
S&\mapsto\Rot\Rot^{\top}S,\\
\At:=\mathring{\Div}_{\bbS}:\dc(\om;\bbS):=\overline{\cic(\om;\bbS)}^{\dom}
\subset\lt(\om;\bbS)&\To\ltom;
&
T&\mapsto\Div T.
\intertext{\normalsize $\Az$, $\Ao$, and $\At$ are unbounded, densely defined, and closed linear operators with adjoints}
\Azs=(\mathring{\sym\grad})^{*}=-\Div_{\bbS}:
\d(\om;\bbS)
\subset\lt(\om;\bbS)&\To\ltom;
&
S&\mapsto-\Div S,\\
\Aos=(\mathring{\Rot\Rot}{}^{\top}_{\bbS})^{*}=\Rot\Rot{}^{\top}_{\bbS}:
\r\r{}^{\top}(\om;\bbS)
\subset\lt(\om;\bbS)&\To\lt(\om;\bbS);
&
T&\mapsto\Rot\Rot^{\top}T,\\
\Ats=\mathring{\Div}_{\bbS}^{*}=-\sym\grad:\hoom
\subset\ltom&\To\lt(\om;\bbS);
&
v&\mapsto-\sym\na v,
\end{align*}
where $\d(\om;\bbS):=\dom\cap\lt(\om;\bbS)$ and 
\begin{align*}
\r\r{}^{\top}(\om)&:=\setb{S\in\ltom}{\Rot\Rot^{\top}S\in\ltom},
&
\r\r{}^{\top}(\om;\bbS)&:=\r\r{}^{\top}(\om)\cap\lt(\om;\bbS).
\end{align*}
Moreover, for $S\in\r\r{}^{\top}(\om;\bbS)$ it holds $\Rot\Rot^{\top}S\in\lt(\om;\bbS)$.
Note that $v$ and $S$, $T$ are vector and tensor (matrix) fields, respectively.
The complex properties hold as
\begin{align*}
R(\Az)=\mathring{\sym\grad}\hocom
&\subset\mathring{\r\r}{}^{\top}_{0}(\om;\bbS)=N(\Ao),\\
R(\Aos)=\Rot\Rot{}^{\top}_{\bbS}\r\r{}^{\top}(\om;\bbS)
&\subset\dz(\om;\bbS)=N(\Azs),\\
R(\Ao)=\mathring{\Rot\Rot}{}^{\top}_{\bbS}\mathring{\r\r}{}^{\top}(\om;\bbS)
&\subset\dc_{0}(\om;\bbS)=N(\At),\\
R(\Ats)=\sym\grad\hoom
&\subset\r\r{}^{\top}_{0}(\om;\bbS)=N(\Aos).
\end{align*}
The sequences \eqref{complexdiagAzot} read
$$\small\begin{CD}
\hocom\subset\ltom @> \A_{0}=\mathring{\sym\grad} >>
\mathring{\r\r}{}^{\top}(\om;\bbS)\subset\lt(\om;\bbS) @> \A_{1}=\mathring{\Rot\Rot}{}^{\top}_{\bbS} >>
\dc(\om;\bbS)\subset\lt(\om;\bbS) @> \A_{2}=\mathring{\Div}_{\bbS} >>
\ltom,
\end{CD}$$
$$\small\begin{CD}
\ltom @< \A_{0}^{*}=-\Div_{\bbS} <<
\d(\om;\bbS)\subset\lt(\om;\bbS) @< \A_{1}^{*}=\Rot\Rot{}^{\top}_{\bbS} <<
\r\r{}^{\top}(\om;\bbS)\subset\lt(\om;\bbS) @< \A_{2}^{*}=-\sym\grad <<
\hoom\subset\ltom.
\end{CD}$$
These are the so-called $\Rot\Rot$ complexes, appearing, e.g., 
in linear elasticity, see \cite{paulyzulehnerbiharmonic}.
Typical equations arising from the $\Rot\Rot$ complex are
systems of generalized linear elasticity, e.g.,
\begin{align*}
\A_{1}S=\mathring{\Rot\Rot}{}^{\top}_{\bbS}S&=F,\\
\A_{0}^{*}S=-\Div_{\bbS}S&=f,
\end{align*}
or simply linear elasticity and related fourth order $\Rot\Rot\Rot\Rot$ systems, e.g.,
\begin{align*}
\A_{0}^{*}\A_{0}v=-\Div_{\bbS}\mathring{\sym\grad}v&=f,
&
\A_{1}^{*}\A_{1}S=\Rot\Rot{}^{\top}_{\bbS}\mathring{\Rot\Rot}{}^{\top}_{\bbS}S&=G,\\
&&
\A_{0}^{*}S=-\Div_{\bbS}S&=f.
\end{align*}
The crucial embeddings \eqref{crucialemb}
are compact, compare to Lemma \ref{weckstlem}.

\begin{lem}[elasticity selection theorems]
\label{elastlem}
Let $\om\subset\rt$ be a strong Lipschitz domain.
Then the embeddings
\begin{align*}
D(\Ao)\cap D(\Azs)&=\mathring{\r\r}{}^{\top}(\om;\bbS)\cap\d(\om;\bbS)\cptemb\lt(\om;\bbS),\\
D(\At)\cap D(\Aos)&=\dc(\om;\bbS)\cap\r\r{}^{\top}(\om;\bbS)\cptemb\lt(\om;\bbS),
\end{align*}
are compact.
\end{lem}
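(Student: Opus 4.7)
The plan is to follow the template used for the biharmonic selection theorems (Lemma \ref{biharmstlem}) in \cite{paulyzulehnerbiharmonic}, reducing each compact embedding to Rellich's theorem \eqref{rellichst} and Weck's selection theorem (Lemma \ref{weckstlem}) for the de Rham complex by means of regular potentials and auxiliary elliptic problems. I describe the approach for the first embedding $\mathring{\r\r}{}^{\top}(\om;\bbS)\cap\d(\om;\bbS)\cptemb\lt(\om;\bbS)$; the second follows by an analogous argument after interchanging the roles of $\Az$ and $\Ats$.

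Let $(S_n)\subset\mathring{\r\r}{}^{\top}(\om;\bbS)\cap\d(\om;\bbS)$ be bounded. First I peel off a regular symmetric-gradient contribution: by Korn's inequality on the strong Lipschitz domain $\om$, there is a unique $v_n\in\hocom$ solving $-\Div_{\bbS}\,\mathring{\sym\grad}\,v_n=-\Div S_n$ weakly, with right-hand side bounded in $\ltom$. Elliptic regularity for the homogeneous Dirichlet Lam\'e system on strong Lipschitz domains yields enough extra regularity on $v_n$ so that $(\mathring{\sym\grad}\,v_n)$ is precompact in $\lt(\om;\bbS)$ via Rellich \eqref{rellichst}. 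Set $T_n:=S_n-\mathring{\sym\grad}\,v_n$. Then $(T_n)$ is bounded in $\lt(\om;\bbS)$, divergence-free, still lies in $\mathring{\r\r}{}^{\top}(\om;\bbS)$, and satisfies $\Rot\Rot^{\top}T_n=\Rot\Rot^{\top}S_n$ bounded in $\lt(\om;\bbS)$.

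The main obstacle is showing that this reduced, divergence-free sequence $(T_n)$ with homogeneous tangential boundary conditions and $\Rot\Rot^{\top}T_n$ bounded in $\lt(\om;\bbS)$ is itself precompact in $\lt(\om;\bbS)$. My plan is to construct a regular symmetric-tensor potential $W_n$ so that $T_n=\Rot\Rot^{\top}W_n$ modulo a correction lying in the cohomology $\mathring{\r\r}{}^{\top}_{0}(\om;\bbS)\cap\dz(\om;\bbS)$, with $W_n$ bounded in a Sobolev space of positive order obtained by solving an auxiliary fourth-order Dirichlet problem. Then Rellich applied to $W_n$, combined with boundedness of $\Rot\Rot^{\top}$ between the relevant spaces, yields precompactness of $T_n$ in $\lt(\om;\bbS)$. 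An alternative route, more in the spirit of \cite{paulyzulehnerbiharmonic}, is to exploit the symmetric-tensor algebra of $\Rot\Rot^{\top}$ to decompose $T_n$ into vector fields whose row-wise $\rot$- and $\div$-data are controlled, then apply Weck's selection theorem (Lemma \ref{weckstlem}) componentwise.

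The delicate issues are the rigorous enforcement of the homogeneous boundary conditions under the potential construction or the row-wise decomposition, and a direct argument for the finite-dimensionality of the cohomology $\mathring{\r\r}{}^{\top}_{0}(\om;\bbS)\cap\dz(\om;\bbS)$ that does not presuppose the compact embedding being proved. Both points are handled in \cite{paulyzulehnerbiharmonic} for the closely related $\mathrm{Grad}\,\mathrm{grad}$/$\div\Div$ complex, and the present $\Rot\Rot$ complex is treated by the analogous machinery with essentially notational changes. Combining the convergent subsequences for the symmetric-gradient piece, the potential piece, and the finite-dimensional cohomology piece gives a subsequence of $(S_n)$ converging in $\lt(\om;\bbS)$, which establishes the first compact embedding; the second is then obtained by the symmetric argument.
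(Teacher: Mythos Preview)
The paper does not give a self-contained proof of this lemma; it simply states that ``a proof can be done by the same techniques showing \cite[Lemma 3.22]{paulyzulehnerbiharmonic}, see \cite{paulyschomburgzulehnerelasticity}.'' Your proposal is pitched at exactly this level --- follow the biharmonic template from \cite{paulyzulehnerbiharmonic} and adapt the arguments notationally to the $\Rot\Rot^{\top}$ complex --- so in that sense it matches the paper's approach.

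On the details you sketch beyond this: your first step (peel off $\mathring{\sym\grad}\,v_{n}$ by solving a Dirichlet--Lam\'e problem and invoke elliptic $H^{3/2}$-regularity on strong Lipschitz domains to get precompactness of the gradient piece) is a valid route, but it is not the one taken in \cite{paulyzulehnerbiharmonic}. There the argument avoids elliptic regularity theory entirely and instead relies on \emph{regular decomposition/potential lemmas} that place pieces of the tensor field directly into $\H^{1}$ with norm bounds, after which Rellich applies. Your ``alternative route'' (row-wise reduction to vector fields with controlled $\rot$ and $\div$, then Weck's selection theorem) is much closer to what is actually carried out in the references. The two points you correctly flag as delicate --- preserving homogeneous boundary conditions under the potential construction, and handling the cohomology without circularity --- are precisely the substantive content of \cite[Lemma 3.21, Lemma 3.22]{paulyzulehnerbiharmonic}; they are resolved there via explicit regular potential operators rather than via auxiliary fourth-order Dirichlet problems as you suggest. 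So your outline is sound in spirit, but if you want to align with the cited proofs, drop the elliptic-regularity detour and work through the regular decomposition machinery instead.
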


A proof can be done by the same techniques showing \cite[Lemma 3.22]{paulyzulehnerbiharmonic},
see \cite{paulyschomburgzulehnerelasticity}.
Again, the elasticity selection theorems are independent of possible material law tensors $\eps$ or $\mu$.
Choosing the pair $(\Az,\Ao)$ we get by Theorem \ref{gen-div-rot-lem} the following:

\begin{theo}[global $\Div$-$\Rot\Rot^{\top}$-$\bbS$-lemma]
\mylabel{div-rot-lem-linelaone}
Let $\mathring{\r\r}{}^{\top}(\om;\bbS)\cap\d(\om;\bbS)\cptemb\lt(\om;\bbS)$ be compact.
Moreover, let $(S_{n})\subset\mathring{\r\r}{}^{\top}(\om;\bbS)$ and $(T_{n})\subset\d(\om;\bbS)$ 
be two sequences bounded in $\r\r{}^{\top}(\om)$ and $\dom$, respectively.
Then there exist $S\in\mathring{\r\r}{}^{\top}(\om;\bbS)$ and $T\in\d(\om;\bbS)$ as well as subsequences, 
again denoted by $(S_{n})$ and $(T_{n})$, 
such that
\begin{itemize}
\item
$S_{n}\wto S$ in $\mathring{\r\r}{}^{\top}(\om;\bbS)$, 
\item
$T_{n}\wto T$ in $\d(\om;\bbS)$, 
\item
$\scp{S_{n}}{T_{n}}_{\lt(\om,\bbS)}\to\scp{S}{T}_{\lt(\om,\bbS)}$.
\end{itemize}
\end{theo}

For the pair $(\Ao,\At)$ we obtain:

\begin{theo}[global $\Rot\Rot^{\top}$-$\Div$-$\bbS$-lemma]
\mylabel{div-rot-lem-linelatwo}
Let $\dc(\om;\bbS)\cap\r\r{}^{\top}(\om;\bbS)\cptemb\lt(\om;\bbS)$ be compact.
Moreover, let $(S_{n})\subset\dc(\om;\bbS)$ and $(T_{n})\subset\r\r{}^{\top}(\om;\bbS)$ 
be two sequences bounded in $\dom$ and $\r\r{}^{\top}(\om)$, respectively.
Then there exist $S\in\dc(\om;\bbS)$ and $T\in\r\r{}^{\top}(\om;\bbS)$ as well as subsequences, 
again denoted by $(S_{n})$ and $(T_{n})$, 
such that
\begin{itemize}
\item
$S_{n}\wto S$ in $\dc(\om;\bbS)$, 
\item
$T_{n}\wto T$ in $\r\r{}^{\top}(\om;\bbS)$, 
\item
$\scp{S_{n}}{T_{n}}_{\lt(\om,\bbS)}\to\scp{S}{T}_{\lt(\om,\bbS)}$.
\end{itemize}
\end{theo}

\begin{rem}
\mylabel{div-rot-lem-linela-rem}
Let us note:
\begin{itemize}
\item[\bf(i)]
The $\Rot\Rot$ complexes of linear elasticity have a strong symmetry.
\item[\bf(ii)]
Theorem \ref{div-rot-lem-linelaone} and Theorem \ref{div-rot-lem-linelatwo} 
are the same results just with interchanged boundary conditions.
\item[\bf(iii)]
Theorem \ref{div-rot-lem-linelaone} and Theorem \ref{div-rot-lem-linelatwo} 
have corresponding local versions similar to 
the local $\div$-$\rot$-lemma Corollary \ref{div-rot-lem-loc} and Remark \ref{div-rot-rem}, 
which hold with no regularity or boundedness assumptions on $\om$.
As in Remark \ref{div-rot-lem-biharm-rem} we note that the local versions 
of Theorem \ref{div-rot-lem-linelaone} and Theorem \ref{div-rot-lem-linelatwo}
are more involved as well, here due to the second order nature 
of the Sobolev spaces $\mathring{\r\r}{}^{\top}(\om;\bbS)$ and $\r\r{}^{\top}(\om;\bbS)$.
A corresponding non-standard Helmholtz type decomposition similar to 
\cite[Lemma 3.21]{paulyzulehnerbiharmonic} is needed to overcome these difficulties.
\item[\bf(iv)]
Material law tensors $\eps$ and $\mu$ different from the identities can be handled as well.
\end{itemize}
\end{rem}

The generalizations in Theorem \ref{gen-div-rot-lem-gen-theo} read as follows.

\begin{theo}[generalized/distributional global $\Div$-$\Rot\Rot^{\top}$-$\bbS$-lemma]
\mylabel{div-rot-lem-linelaone-gen}
Let the ranges $\sym\grad\hocom$ and $\Rot\Rot^{\top}\mathring{\r\r}{}^{\top}(\om;\bbS)$
be closed and let the generalized Dirichlet-Neumann fields
$\mathring{\r\r}{}^{\top}_{0}(\om;\bbS)\cap\dz(\om;\bbS)$ be finite-dimensional.
Moreover, let $(S_{n}),(T_{n})\subset\lt(\om,\bbS)$ be two bounded sequences such that
\begin{itemize}
\item
$(\widetilde{\mathring{\Rot\Rot}{}^{\top}_{\bbS}}\,S_{n})$ is relatively compact in $\r\r{}^{\top}(\om;\bbS)'$,
\item
$(\widetilde{\Div_{\bbS}}\,T_{n})$ is relatively compact in $\hocom'=\hmoom$.
\end{itemize}
Then there exist $S,T\in\lt(\om,\bbS)$ as well as subsequences, again denoted by $(S_{n})$ and $(T_{n})$, 
such that
\begin{itemize}
\item
$S_{n}\wto S$ in $\lt(\om,\bbS)$,
\item
$T_{n}\wto T$ in $\lt(\om,\bbS)$,
\item
$\scp{S_{n}}{T_{n}}_{\lt(\om,\bbS)}\to\scp{S}{T}_{\lt(\om,\bbS)}$.
\end{itemize}
\end{theo}

\begin{theo}[generalized/distributional global $\Rot\Rot^{\top}$-$\Div$-$\bbS$-lemma]
\mylabel{div-rot-lem-linelatwo-gen}
Let $\Rot\Rot^{\top}\mathring{\r\r}{}^{\top}(\om;\bbS)$ and $\Div\dc(\om;\bbS)$
be closed and let the generalized Dirichlet-Neumann fields
$\dc_{0}(\om;\bbS)\cap\r\r{}^{\top}_{0}(\om;\bbS)$ be finite-dimensional.
Moreover, let $(S_{n}),(T_{n})\subset\lt(\om,\bbS)$ be two bounded sequences such that
\begin{itemize}
\item
$(\widetilde{\mathring{\Div}_{\bbS}}\,S_{n})$ is relatively compact in $\hoom'=\hmocom$,
\item
$(\widetilde{\Rot\Rot{}^{\top}_{\bbS}}\,T_{n})$ is relatively compact in $\mathring{\r\r}{}^{\top}(\om;\bbS)'$.
\end{itemize}
Then there exist $S,T\in\lt(\om,\bbS)$ as well as subsequences, again denoted by $(S_{n})$ and $(T_{n})$, 
such that
\begin{itemize}
\item
$S_{n}\wto S$ in $\lt(\om,\bbS)$,
\item
$T_{n}\wto T$ in $\lt(\om,\bbS)$,
\item
$\scp{S_{n}}{T_{n}}_{\lt(\om,\bbS)}\to\scp{S}{T}_{\lt(\om,\bbS)}$.
\end{itemize}
\end{theo}

\begin{rem}
\mylabel{div-rot-lem-linelaone-two-rem}
By Lemma \ref{elastlem} and Lemma \ref{moeppiassequirem}, 
Theorem \ref{div-rot-lem-linelaone}, Theorem \ref{div-rot-lem-linelatwo}, and 
Theorem \ref{div-rot-lem-linelaone-gen}, Theorem \ref{div-rot-lem-linelatwo-gen}
hold for strong Lipschitz domains $\om\subset\rt$.
\end{rem}

\begin{acknow}
The author is grateful to S\"oren Bartels for bringing up the topic of the $\div$-$\curl$-lemma,
and especially to Marcus Waurick for lots of inspiring discussions on the $\div$-$\curl$-lemma 
and for his substantial contributions to the Special Semester at RICAM in Linz late 2016.
\end{acknow}

\bibliographystyle{plain} 
\bibliography{paule}

\begin{thebibliography}{10}

\bibitem{alexanderian2015}
A.~Alexanderian.
\newblock Expository paper: a primer on homogenization of elliptic pdes with
  stationary and ergodic random coefficient functions.
\newblock {\em Rocky Mountain J. Math.}, 45(3):703--735, 2015.

\bibitem{bartels2010}
S.~Bartels.
\newblock Numerical analysis of a finite element scheme for the approximation
  of harmonic maps into surfaces.
\newblock {\em Math. Comp.}, 79(271):1263--1301, 2010.

\bibitem{bauerpaulyschomburgmcpweaklip}
S.~Bauer, D.~Pauly, and M.~Schomburg.
\newblock The {M}axwell compactness property in bounded weak {L}ipschitz
  domains with mixed boundary conditions.
\newblock {\em SIAM J. Math. Anal.}, 48(4):2912--2943, 2016.

\bibitem{bauerpaulyschomburg2017}
S.~Bauer, D.~Pauly, and M.~Schomburg.
\newblock The {M}axwell compactness property in bounded weak {L}ipschitz
  domains with mixed boundary conditions in {ND}.
\newblock {\em Maxwell's Equations: Analysis and Numerics (Radon Series on
  Computational and Applied Mathematics), De Gruyter}, 2019.

\bibitem{braessbook}
D.~Braess.
\newblock {\em Finite elements: Theory, fast solvers, and applications in
  elasticity theory}.
\newblock Cambridge University Press, Cambridge, third edition, 2007.

\bibitem{brianecasadodiazmurat2009}
M.~Briane, J.~Casado-D\'az, and F.~Murat.
\newblock The div-curl lemma ``trente ans apr\`es'': an extension and an
  application to the g-convergence of unbounded monotone operators.
\newblock {\em J. Math. Pures Appl.}, (9) 91(5), 2009.

\bibitem{chenliglobrig}
G.-Q.G. Chen and S.~Li.
\newblock Global weak rigidity of the {G}auss-{C}odazzi-{R}icci equations and
  isometric immersions of {R}iemannian manifolds with lower regularity.
\newblock {\em J. Geom. Anal.}, 28(3):1957--2007, 2018.

\bibitem{coifmanlionsmeyersemmescompcomphardy}
R.~Coifman, P.-L. Lions, Y.~Meyer, and S.~Semmes.
\newblock Compensated compactness and {H}ardy spaces.
\newblock {\em J. Math. Pures Appl.}, 72(9)(3):247--286, 1993.

\bibitem{costabelremmaxlip}
M.~Costabel.
\newblock A remark on the regularity of solutions of {M}axwell's equations on
  {L}ipschitz domains.
\newblock {\em Math. Methods Appl. Sci.}, 12(4):365--368, 1990.

\bibitem{evans1990}
L.C. Evans.
\newblock {\em Weak convergence methods for nonlinear partial differential
  equations}.
\newblock American Mathematical Society, Providence, 1990.

\bibitem{evans1991}
L.C. Evans.
\newblock Partial regularity for stationary harmonic maps into spheres.
\newblock {\em Arch. Rational Mech. Anal.}, 116(2):101--113, 1991.

\bibitem{freiremuellerstruwe1998}
A.~Freire, S.~M\"uller, and M.~Struwe.
\newblock Weak compactness of wave maps and harmonic maps.
\newblock {\em Ann. Inst. H. PoincarŽ Anal.}, Non LinŽaire 15(6):725--754,
  1998.

\bibitem{glorianeukammotto2015}
A.~Gloria, S.~Neukamm, and F.~Otto.
\newblock Quantification of ergodicity in stochastic homogenization: optimal
  bounds via spectral gap on glauber dynamics.
\newblock {\em Invent. Math.}, 199(2):455--515, 2015.

\bibitem{jochmanncompembmaxmixbc}
F.~Jochmann.
\newblock A compactness result for vector fields with divergence and curl in
  ${L}^q({\Omega})$ involving mixed boundary conditions.
\newblock {\em Appl. Anal.}, 66:189--203, 1997.

\bibitem{kozonoyanagisawa2013}
H.~Kozono and T.~Yanagisawa.
\newblock Global compensated compactness theorem for general differential
  operators of first order.
\newblock {\em Arch. Ration. Mech. Anal.}, 207(3):879--905, 2013.

\bibitem{leisbook}
R.~Leis.
\newblock {\em Initial Boundary Value Problems in Mathematical Physics}.
\newblock Teubner, Stuttgart, 1986.

\bibitem{murat1978}
F.~Murat.
\newblock Compacit\'e par compensation.
\newblock {\em Annali della Scuola Normale Superiore di Pisa-Classe di
  Scienze}, 5(3):489--507, 1978.

\bibitem{paulymaxconst0}
D.~Pauly.
\newblock On constants in {M}axwell inequalities for bounded and convex
  domains.
\newblock {\em Zapiski POMI{\rm, 435:46-54, 2014}, \& J. Math. Sci. (N.Y.){\rm,
  210(6):787-792}}, 2015.

\bibitem{paulymaxconst1}
D.~Pauly.
\newblock On {M}axwell's and {P}oincar\'e's constants.
\newblock {\em Discrete Contin. Dyn. Syst. Ser. S}, 8(3):607--618, 2015.

\bibitem{paulymaxconst2}
D.~Pauly.
\newblock On the {M}axwell constants in 3{D}.
\newblock {\em Math. Methods Appl. Sci.}, 40(2):435--447, 2017.

\bibitem{paulyschomburgzulehnerelasticity}
D.~Pauly, M.~Schomburg, and W.~Zulehner.
\newblock The {R}ot{R}ot-complex and applications to elasticity.
\newblock {\em submitted}, 2019.

\bibitem{paulyzulehnerbiharmonic}
D.~Pauly and W.~Zulehner.
\newblock The div{D}iv-complex and applications to biharmonic equations.
\newblock {\em Appl. Anal.}, 2019.

\bibitem{paulyzulehner2018b}
D.~Pauly and W.~Zulehner.
\newblock On the duals of {S}obolev sapces for vector fields.
\newblock {\em submitted}, 2019.

\bibitem{picardharmdiff}
R.~Picard.
\newblock Zur {T}heorie der harmonischen {D}ifferentialformen.
\newblock {\em Manuscripta Math.}, 1:31--45, 1979.

\bibitem{picardpotential}
R.~Picard.
\newblock {R}andwertaufgaben der verallgemeinerten {P}otentialtheorie.
\newblock {\em Math. Methods Appl. Sci.}, 3:218--228, 1981.

\bibitem{picardboundaryelectro}
R.~Picard.
\newblock On the boundary value problems of electro- and magnetostatics.
\newblock {\em Proc. Roy. Soc. Edinburgh Sect. A}, 92:165--174, 1982.

\bibitem{picardcomimb}
R.~Picard.
\newblock An elementary proof for a compact imbedding result in generalized
  electromagnetic theory.
\newblock {\em Math. Z.}, 187:151--164, 1984.

\bibitem{riviere2007}
T.~Rivi\`ere.
\newblock Conservation laws for conformally invariant variational problems.
\newblock {\em Invent. Math.}, 168(1):1--22, 2007.

\bibitem{roegerschweizer2017}
M.~R\"oger and B.~Schweizer.
\newblock Strain gradient visco-plasticity with dislocation densities
  contributing to the energy.
\newblock {\em Math. Models Methods Appl. Sci.}, 27(4):2595--2629, 2017.

\bibitem{schweizer2017}
B.~Schweizer.
\newblock On {F}riedrichs inequality, {H}elmholtz decomposition, vector
  potentials, and the div-curl lemma.
\newblock {\em Trends in applications of mathematics to mechanics, Springer
  INdAM Ser., Cham}, 27:65--79, 2018.

\bibitem{struwe2008}
M.~Struwe.
\newblock {\em Variational methods. Applications to nonlinear partial
  differential equations and Hamiltonian systems}.
\newblock Springer, Berlin, 2008.

\bibitem{tartar1979}
L.~Tartar.
\newblock Compensated compactness and applications to partial differential
  equations.
\newblock {\em Nonlinear analysis and mechanics, Heriot-Watt symposium},
  4:136--211, 1979.

\bibitem{tartar2009}
L.~Tartar.
\newblock {\em The general theory of homogenization. A personalized
  introduction}.
\newblock Springer, Berlin, 2009.

\bibitem{tartar2015}
L.~Tartar.
\newblock Compensated compactness with more geometry.
\newblock {\em Springer Proc. Math. Stat.}, 137:74--101, 2015.

\bibitem{waurick2018a}
M.~Waurick.
\newblock A functional analytic perspective to the div-curl lemma.
\newblock {\em J. Operator Theory}, 80(1):95--111, 2018.

\bibitem{waurick2018b}
M.~Waurick.
\newblock Nonlocal {H}-convergence.
\newblock {\em Calc. Var. Partial Differential Equations}, 57(6):Art. 159, 46
  pp., 2018.

\bibitem{webercompmax}
C.~Weber.
\newblock A local compactness theorem for {M}axwell's equations.
\newblock {\em Math. Methods Appl. Sci.}, 2:12--25, 1980.

\bibitem{weckmax}
N.~Weck.
\newblock {M}axwell's boundary value problems on {R}iemannian manifolds with
  nonsmooth boundaries.
\newblock {\em J. Math. Anal. Appl.}, 46:410--437, 1974.

\bibitem{witschremmax}
K.-J. Witsch.
\newblock A remark on a compactness result in electromagnetic theory.
\newblock {\em Math. Methods Appl. Sci.}, 16:123--129, 1993.

\end{thebibliography}

\end{document}